\definecolor{darkblue}{rgb}{0,0,0.7}
\definecolor{darkred}{rgb}{0.7,0,0}
\newtheorem{proposition}{Proposition}[section]
\newtheorem{lemma}[proposition]{Lemma}
\newtheorem{theorem}[proposition]{Theorem}
\newtheorem{corollary}[proposition]{Corollary}
\newtheorem{conjecture}[proposition]{Conjecture}
\theoremstyle{definition}
\newtheorem{remark}[proposition]{Remark}
\newtheorem{example}[proposition]{Example}
\newtheorem{definition}[proposition]{Definition}
\newcommand{\reg}{{\rm reg}}
\newcommand{\depth}{{\rm depth}}
\tikzstyle{place}=[draw,circle,minimum size=1mm,inner sep=1pt,outer sep=-1.1pt,fill=black]
\tikzstyle{places}=[draw,rectangle,minimum size=8pt,inner sep=0pt]
\tikzstyle{placesf}=[draw,rectangle,minimum size=5pt,inner sep=0pt]
\tikzstyle{placec}=[draw,circle,minimum size=8pt,inner sep=0pt]
\tikzstyle{placecf}=[draw,circle, minimum size=5pt,inner sep=0pt]
\def\K{\mathbb{K}}
\def\G{\mathcal{G}}
\def\C{\mathcal{C}}
\def\B{\mathcal{B}}
\def\d{\mathrm{depth}}
\def\di{\mathrm{depth}(R/I(G)^{[k]})}
\def\reg{\mathrm{reg}}
\def\l{\langle}
\def\r{\rangle}
\def\x{\mathbf x}
\def\b{\mathbf b}
\begin{document}

\title{Square-free powers of Cohen-Macaulay forests, cycles, and whiskered cycles}

\author{Kanoy Kumar Das}
\address{Chennai Mathematical Institute, India}
\email{kanoydas@cmi.ac.in; kanoydas0296@gmail.com}

\author{Amit Roy}
\address{Chennai Mathematical Institute, India}
\email{amitiisermohali493@gmail.com}

\author{Kamalesh Saha}
\address{Chennai Mathematical Institute, India}
\email{ksaha@cmi.ac.in; kamalesh.saha44@gmail.com}

\keywords{Square-free powers, Cohen-Macaulay rings, Castelnuovo-Mumford regularity, depth, edge ideals, forests, cycles, whisker graphs}
\subjclass[2020]{Primary: 13C15, 05E40, 13D02; Secondary: 13H10, 05C70}

\vspace*{-0.4cm}
\begin{abstract}
    Let $I(G)^{[k]}$ denote the $k^{th}$ square-free power of the edge ideal $I(G)$ of a graph $G$. In this article, we provide a precise formula for the depth of $I(G)^{[k]}$ when $G$ is a Cohen-Macaulay forest. Using this, we show that for a Cohen-Macaulay forest $G$, the $k^{th}$ square-free power of $I(G)$ is always Cohen-Macaulay, which is quite surprising since all ordinary powers of $I(G)$ can never be Cohen-Macaulay unless $G$ is a disjoint union of edges. Next, we give an exact formula for the regularity and tight bounds on the depth of square-free powers of edge ideals of cycles. In the case of whiskered cycles, we obtain tight bounds on the regularity and depth of square-free powers, which aids in identifying when such ideals have linear resolutions. Additionally, we compute depth of $I(G)^{[2]}$ when $G$ is a cycle or whiskered cycle, and regularity of $I(G)^{[2]}$ when $G$ is a whiskered cycle.
\end{abstract}

\maketitle

\section{Introduction}
Research on edge ideals of graphs and their powers is one of the central topics of study in combinatorial commutative algebra. The notion of edge ideals was introduced by Villarreal \cite{CM} in 1990, and since then, a significant amount of work has been done on this topic. Edge ideals are a specific example of Stanley-Reisner ideals, and because of this, they offer numerous combinatorial insights into graph theory. The study of powers of edge ideals is currently an active area of research but remains quite challenging, which explains its limited investigations so far. Even for some fundamental classes of graphs, such as forests \cite{BHT}, unicyclic graphs \cite{ABS123}, and Cameron-Walker graphs \cite{BBHa2020}, the regularity of powers of edge ideals has been established in recent years. To the best of our knowledge, efforts to determine the depth of all powers of an edge ideal have only been successful for cycles \cite{MiTrVu2023} and Cohen-Macaulay forests \cite{HHT1234}.
\par

Recently, researchers have shown immense interest in studying the square-free powers of edge ideals, which have a nice connection to the matching theory of graphs. Furthermore, the study of square-free powers provides a significant boost in the study of ordinary powers as the multigraded minimal free resolution of the square-free powers is an induced subcomplex of that of the ordinary powers. Let $R=\mathbb{K}[x_1,\ldots,x_n]$ be a polynomial ring with $n$ variables over a field $\mathbb{K}$ and $I\subseteq R$ be a square-free monomial ideal. The $k^{th}$ \textit{square-free power} of $I$, denoted by $I^{[k]}$, is the ideal of $R$ generated by all square-free monomials of the ideal $I^k$. Note that $I^{[k]}=\l 0\r$ for $k>>0$. Let $\mathcal{G}(I)$ denote the unique minimal monomial generating set of a monomial ideal $I$. Then one can observe that $\mathcal{G}(I^{[k]})$ is the set of square-free monomials contained in $\mathcal{G}(I^k)$. Now consider a finite simple graph $G$ on the vertex set $V(G)=\{x_1,\ldots,x_n\}$ and the edge set $E(G)$. A \textit{matching} $M$ of $G$ is a collection of distinct edges of $G$ such that $e\cap f=\emptyset$ for all $e,f\in M$ with $e\neq f$. A matching $M$ is called a \textit{$k$-matching} if $\vert M\vert =k$. For $A\subseteq \{x_1,\ldots,x_n\}$, let us denote by $\mathbf{x}_{A}$ the square-free monomial $\prod_{x_i\in A}x_i$ in $R$. If $I(G)$ denotes the edge ideal of a graph $G$, then
$$I(G)^{[k]}=\left\l \mathbf{x}_{A}\mid A=\bigcup_{e_i\in M} e_i \text{ and } M \text{ is a $k$-matching of $G$}\right\r.$$
In other words, $I(G)^{[k]}$ is generated by products of $k$ minimal generators of $I(G)$ that form a regular sequence in $R$. The ideal $I(G)^{[k]}$ is also known as the $k^{th}$ \textit{matching power} of $G$. Note that if $k=1$, then $I(G)^{[1]}$ is nothing but the edge ideal $I(G)$. Let $\nu(G)$ denote the maximum cardinality of a matching in $G$. Then we have $I(G)^{[k]}\neq \l 0\r$ if and only if $k\leq \nu(G)$. Therefore, in order to study $I(G)^{[k]}$, it is enough to consider $k\leq \nu(G)$.\par 

The study of square-free powers began with the work of Bigdeli, Herzog, and Zaare-Nahandi \cite{BHZN} in 2018. However, it garnered considerable attention after the intriguing work of Erey-Herzog-Hibi-Saeedi Madani \cite{EHHS} in 2022. They gave bounds for the regularity of $I(G)^{[k]}$ and discussed the question of when such powers have a linear resolution or satisfy the square-free Ratliff property. In \cite{EHHM12}, the same authors introduced the normalized depth function on square-free powers and conjectured that such a function is non-increasing. Later, Fakhari disproved this conjecture in \cite{SAFS125}. However, the conjecture is widely open for square-free powers of edge ideals of graphs. Erey-Hibi in \cite{ErHi1} introduced the notion of $k$-admissible matching of a graph and gave a conjecture \cite[Conjecture 31]{ErHi1} on the regularity of square-free powers of forests in terms of the $k$-admissible matching number. Later on, Crupi-Ficarra-Lax \cite{CFL1} settled this conjecture using Betti splitting. For more work on the square-free powers, the reader can look into \cite{FiHeHi,  KaNaQu, SAFS124, Fakhari2024}.

In this paper, we study square-free powers of edge ideals of some fundamental classes of graphs, namely Cohen-Macaulay forests, cycles, and whiskered cycles. Although these graphs may appear simple in structure, the study of their square-free powers is quite challenging, sometimes even more difficult to handle when compared to their ordinary powers. Recall that, Crupi-Ficarra-Lax \cite{CFL1} expressed the regularity of square-free powers of forests in terms of the admissible matching number of the corresponding graph. However, even for a simple enough graph, it is very difficult to find the admissible matching number. In \Cref{sec tree}, we derive a simple numerical formula for the regularity of $k^{th}$ square-free powers of paths (see \Cref{sqf path}) and whiskered paths (see \Cref{proppathwhisk}) in terms of the number of vertices of the underlying graphs and the integer $k$. Our main result in this section is \Cref{thmcmtreedepth}, where we show that for a Cohen-Macaulay forest $G$ of dimension $m$, $\depth(R/I(G)^{[k]})=m+k-1$ for all $1\leq k\leq \nu(G)$. As a consequence of \Cref{thmcmtreedepth}, we obtain the following:
\medskip

\noindent \textbf{\Cref{sqfree powers cm}.} \textit{Let $G$ be a Cohen-Macaulay forest. Then $R/I(G)^{[k]}$ is Cohen-Macaulay for all $k$.
}
\medskip

\noindent The aforementioned result is quite surprising as it follows from \cite[Page 2]{RTY11} and \cite[Corollary 3.10]{HHT1234} that for a graph $G$, $R/I(G)^k$ is Cohen-Macaulay for all $k\geq 1$ if and only if $G$ is a disjoint union of edges (see also \Cref{cm forest all sqf powers}). Thus, it will be interesting to study when the square-free powers of edge ideals of graphs are Cohen-Macaulay. Moving on, in \Cref{sec cycle} of this paper, we provide an explicit formula for the regularity of square-free powers of edge ideals of cycles:
\medskip

 \noindent \textbf{\Cref{cycle reg theorem}.} \textit{Let $C_n$ denote the cycle of length $n$. Then for $1\le k\le \nu(C_n)=\lfloor\frac{n}{2}\rfloor$, we have
    \[
    \reg(I(C_n)^{[k]})=2k+\left\lfloor\frac{n-2k}{3}\right\rfloor.
    \]
    }
    
    \noindent 
    From the above result, one can observe that for a cycle $C_n$, $I(C_n)^{[k]}$ has a linear resolution if and only if $k=\nu(C_n)$ in case of odd $n$, and $k\in\{\nu(C_n),\nu(C_n)-1\}$ in case of even $n$ (\Cref{linear even cycle}). Next, it is known from the work of Jacques \cite{JacquesThesis} that $\depth(R/I(C_n))=\lceil \frac{n-1}{3}\rceil $. For $2\leq k\leq \nu(C_n)$, we show in \Cref{cycle depth lower bound} that $\depth(R/I(C_n)^{[k]})\geq\lceil \frac{n}{3}\rceil+k-1$. As an application, we derive in \Cref{cycle depth 2} that $\depth(R/I(C_n)^{[2]})=\lceil \frac{n}{3}\rceil+1$. It is worthwhile to mention that for any graph $G$, $\depth(R/I(G)^{[k]})=2k-1$ for $k=\nu(G)$ (see \Cref{depth highest power}). For a cycle $C_n$ of even length, we show that $\d(R/I(C_n)^{[k]})=2k-1$ for $k=\nu(C_n)-1=\lfloor\frac{n}{2}\rfloor-1$ (\Cref{prop cycle depth m-1}).

    \Cref{sec whiskered cycle} of this paper deals with the regularity and depth of square-free powers of edge ideals of whiskered cycles. Considering a slightly more general class containing the whiskered cycles, we derive bounds on the regularity of square-free powers of their edge ideals in \Cref{reg whiskeredcycle multiedge}. As a consequence, we obtain
    \medskip
    
    \noindent \textbf{\Cref{whisker reg bound}.} \textit{
Let $W(C_m)$ denote the whisker graph on $C_m$. Then for each $1\le k\le \nu(W(C_m))$, 
$$2k+\left\lfloor\frac{m-k-1}{2}\right\rfloor\le \reg(I(W(C_m))^{[k]})\le 2k+\left\lfloor\frac{m-k}{2}\right\rfloor.$$
    }
    
\noindent
Due to the above-mentioned lower bound, one can see that $I(W(C_m))^{[k]}$ does not have a linear resolution if $k<m-2$ (\Cref{whisker not linear}). Now, it follows from \cite[Proposition 1.1]{MFY} that $\reg(I(W(C_m)))=\lfloor\frac{m}{2}\rfloor+1$. In \Cref{whisker cycle reg 2}, we show that $\reg(I(W(C_m))^{[2]})=3+\left\lfloor\frac{m-1}{2}\right\rfloor$. Regarding the depth, we have $\depth(R/I(W(C_m)))=m$ (see \cite{CM}) and we derive $\depth(R/I(W(C_m))^{[2]})$ as follows:
\medskip

\noindent \textbf{\Cref{whisker cycle depth 2}.} \textit{Let $W(C_m)$ denote the whisker graph on a cycle of length $m$. Then
    \begin{align*}
        \depth(R/I(W(C_m))^{[2]})=
        \begin{cases}
            3 &\text{ if $m=3,$}\\
            m+1 &\text{ if $m>3.$}
        \end{cases}
    \end{align*}
}

\noindent 
Moreover, we show in \Cref{depth whisker upper bound} that $\depth(R/I(W(C_m))^{[k]})\leq m+k-1$ for all $1\leq k\leq m$. Finally, in \Cref{sec conj}, based on our results and computational evidence, we propose three conjectures (Conjecture \ref{conj depth cycle}, \ref{conj whisker reg}, \ref{conj whisker depth}) regarding the depth of square-free powers for cycles and whiskered cycles, and the regularity of square-free powers for whiskered cycles.

\section{Preliminaries and some technical lemmas}

In this section, we briefly recall some relevant results from combinatorics and commutative algebra that will be used throughout the rest of the paper. Moreover, we derive some technical lemmas on square-free powers of edge ideals related to certain ideal theoretic operations.\medskip

\subsection{Graph Theory and Combinatorics:}

    Let $G=(V (G), E(G))$ be a finite simple graph with vertex set $V(G)$ and edge set $E(G)$. For a vertex $x$ in $G$, the set of {\it neighbors} of $x$, denoted by $N_G(x)$, is the set $\{y \in V(G) \mid \{x,y\}\in E(G)\}$. The set of \textit{closed neighbors} of $x$ is the set $N_G(x)\cup \{x\}$, and is denoted by $N_G[x]$. More generally, if $x_1,\ldots,x_t\in V(G)$, then we define $N_G[x_1,\ldots,x_t]=\cup_{i=1}^{t}N_{G}[x_i]$. For $x\in V(G)$, the number $|N_G(x)|$ is called the {\it degree} of $x$, and is denoted by $\deg(x)$. Given a graph $G$, to add a {\it whisker} at a vertex $x$ of $G$, one simply adds a new vertex $y$ and an edge connecting $y$ and $x$ to $G$. For $W\subseteq V(G)$, the graph $G\setminus W$ denotes the graph on the vertex set $V(G)\setminus W$ and the edge set $\{\{a,b\}\in E(G)\mid x\notin \{a,b\}\text{ for each }x\in W\}$. If $W=\{x\}$ for some $x\in V(G)$, then $G\setminus W$ is simply denoted by $G\setminus x$. Similarly, for $W\subseteq V(G)$, the {\it induced subgraph of $G$ on $W$}, denoted by $G[W]$, is the graph $G\setminus (V(G)\setminus W)$. A subset $A\subseteq E(G)$ is called an {\it induced matching} of $G$ if $A$ is a matching in $G$ and $E(G[\cup_{e\in A}e])=A$. The {\it induced matching number} of $G$, denoted by $\nu_1(G)$, is the maximum cardinality of an induced matching in $G$. An induced matching of cardinality $2$ is called a {\it gap} in $G$.
\medskip

\noindent   
\textbf{Various classes of simple graphs:} 
\begin{enumerate}
    \item A {\it path} graph $P_m$ of length $m-1$ is a graph on the vertex set $\{x_1,\ldots,x_{m}\}$ with the edge set $\{\{x_i,x_{i+1}\}\mid 1\le i\le m-1\}$. 
    
    \item A {\it cycle} of length $m$, denoted by $C_m$, is a graph such that $V(C_m)=\{x_1,\ldots,x_m\}$ and $E(C_m)=\{\{x_1,x_m\},\{x_i,x_{i+1}\}\mid 1\le i\le m-1\}$.

    \item A graph $G$ is called a \textit{forest} if it does not contain any induced cycle. A connected forest is known as a \textit{tree}.

    \item A graph is called \textit{chordal} if it does not contain an induced $C_n$ for any $n\geq 4$. A graph whose complement graph is chordal is known as a \textit{co-chordal graph}. 

    \item Let $G$ be a graph with $V(G)=\{x_1,\ldots,x_n\}$ and the edge set $E(G)$. Construct a new graph $W(G)$ with $V(W(G))=\{x_i,y_i\mid 1\le i\le  n\}$ and $E(W(G))=E(G)\cup \{\{x_i,y_i\}\mid 1\le i\le n\}$. Then $W(G)$ is called the \textit{whisker graph} on $G$. The graphs $W(C_m)$ and $W(P_m)$ are called {\it whiskered cycle} and {\it whiskered path}, respectively.
    \end{enumerate}

\subsection{Commutative algebra}
In this subsection, we recall some basic facts related to regularity and depth of graded ideals.\par 

\noindent \textbf{Regularity:} Let $I\subseteq R=\K[x_1,\ldots, x_n]$ be a graded ideal. A graded minimal free resolution of $I$ is an exact sequence
\[
\mathcal F_{\cdot}: \,\, 0\rightarrow F_k\xrightarrow{\partial_{k}}\cdots\xrightarrow{\partial_{2}} F_1\xrightarrow{\partial_1} F_0\xrightarrow{\partial_0} I\rightarrow 0, 
\]
where $F_i=\oplus_{j\in\mathbb N}R(-j)^{\beta_{i,j}(I)}$ for $i\ge 0$, $R(-j)$ is the polynomial ring $R$ with its grading twisted by $j$, and $k\le n$. The numbers $\beta_{i,j}(I)$ are called the $i^{th}$ $\mathbb N$-graded {\it Betti numbers} of $I$ in degree $j$. The {\it Castelnuovo-Mumford regularity} (in short, regularity) of $I$, denoted by $\reg(I)$, is the number $\max\{j-i\mid \beta_{i,j}(I)\neq 0\}$. If $I$ is the zero ideal, then we adopt the convention that $\reg(I)=1$. A graded ideal $I$ with all the minimal generators having the same degree $d$ is said to have a {\it linear resolution} if $\reg(I)=d$. %Moreover, the invariant $\max\{i\mid \beta_{i,j}(R/I)\neq 0\}$ is called the {\it projective dimension} of $R/I$, and is denoted by $\pd(R/I)$. 
\par 

The following are some well-known results regarding the regularity of a graded ideal.

\begin{lemma}\label{reg sum}\cite{RHV}
        Let $I_1\subseteq R_1=\mathbb K[x_1,\ldots,x_m]$ and $I_2\subseteq R_2=\mathbb K[x_{m+1},\ldots, x_n]$ be two graded ideals. Consider the ideal $I=I_1R+I_2R\subseteq R=\mathbb K[x_1,\ldots,x_n]$. Then 
        \[ \reg(I)=\reg(I_1)+\reg(I_2)-1.
        \]
    \end{lemma}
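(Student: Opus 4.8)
The plan is to use that $I_1$ and $I_2$ live on disjoint variable sets, so that the minimal free resolution of $R/I$ is the tensor product of those of $R_1/I_1$ and $R_2/I_2$, and then read off regularity from Betti numbers. First I would pass from $I$ to $R/I$ using the standard identity $\beta_{i,j}(I) = \beta_{i+1,j}(R/I)$, equivalently $\reg(I) = \reg(R/I) + 1$ (consistent with the convention $\reg(\langle 0\rangle) = 1$, since $\reg(R) = 0$); likewise $\reg(I_\ell) = \reg(R_\ell/I_\ell) + 1$ for $\ell = 1, 2$. Thus it suffices to prove $\reg(R/I) = \reg(R_1/I_1) + \reg(R_2/I_2)$.

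Second, let $\mathcal{F}_\bullet$ and $\mathcal{G}_\bullet$ be the graded minimal free resolutions of $R_1/I_1$ over $R_1$ and of $R_2/I_2$ over $R_2$. Since $R$ is faithfully flat over both $R_1$ and $R_2$, base change gives free resolutions $\mathcal{F}_\bullet \otimes_{R_1} R$ of $R/I_1R$ and $\mathcal{G}_\bullet \otimes_{R_2} R$ of $R/I_2R$. Because $R/I_1R$ is just the polynomial ring $(R_1/I_1)[x_{m+1}, \dots, x_n]$, the variables $x_{m+1}, \dots, x_n$ form a regular sequence on it, so $\mathrm{Tor}^R_i(R/I_1R,\, R/I_2R) = 0$ for $i > 0$, while $\mathrm{Tor}^R_0(R/I_1R,\, R/I_2R) = R/(I_1R + I_2R) = R/I$. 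Consequently the total complex of $(\mathcal{F}_\bullet \otimes_{R_1} R) \otimes_R (\mathcal{G}_\bullet \otimes_{R_2} R)$ is a graded free resolution of $R/I$, and it is minimal: the entries of its differentials lie in $(x_1, \dots, x_m) + (x_{m+1}, \dots, x_n) = \mathfrak{m}$, inherited from the minimality of $\mathcal{F}_\bullet$ and $\mathcal{G}_\bullet$.

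Third, from minimality the graded Betti numbers satisfy the K\"unneth-type identity
\[
\beta^R_{i,j}(R/I) \;=\; \sum_{\substack{p + q = i \\ a + b = j}} \beta^{R_1}_{p,a}(R_1/I_1)\,\beta^{R_2}_{q,b}(R_2/I_2).
\]
Hence $\beta^R_{i,j}(R/I) \neq 0$ exactly when $j - i = (a - p) + (b - q)$ for some pairs with $\beta^{R_1}_{p,a} \neq 0$ and $\beta^{R_2}_{q,b} \neq 0$; since the two contributions can be maximized independently, $\max\{\, j - i : \beta^R_{i,j}(R/I) \neq 0 \,\}$ equals $\reg(R_1/I_1) + \reg(R_2/I_2)$. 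Combining with the first step yields $\reg(I) = (\reg(I_1) - 1) + (\reg(I_2) - 1) + 1 = \reg(I_1) + \reg(I_2) - 1$.

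The one point deserving care is the vanishing $\mathrm{Tor}^R_{>0}(R/I_1R,\, R/I_2R) = 0$, which is what makes the tensor complex a genuine resolution, and this is precisely where the hypothesis that $I_1$ and $I_2$ use disjoint variables enters; everything else is routine bookkeeping on the differences $j-i$. One could alternatively phrase the whole argument directly in terms of tensor products of complexes of $\K$-vector spaces via the K\"unneth formula for $\mathrm{Tor}^R_\bullet(R/I,\K)$, or simply cite \cite{RHV}.
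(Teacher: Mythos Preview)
Your argument is correct and is the standard proof of this fact: tensor the minimal free resolutions over the disjoint polynomial rings, use the Tor-vanishing coming from the disjoint variable sets to conclude that the tensor complex is a minimal free resolution of $R/I$, and read off regularity from the resulting K\"unneth formula for Betti numbers. The paper itself does not supply a proof of this lemma; it is simply quoted as a known result with a citation to \cite{RHV}, so there is no in-paper argument to compare against.
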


\begin{lemma}\textup{\cite[cf. Lemma 2.10]{DHS}}\label{lemreg}
    Let $I \subseteq R$ be a monomial ideal, $m$ be a monomial of degree $d$ in $R$, and $x$ be an indeterminate in $R$. Then 
\begin{enumerate}
    \item[(i)] $\reg(I+\l x\r)\le \reg(I) $,

    \item[(ii)] $\reg(I:x)\le \reg(I)$,

    \item[(iii)] $\reg I \le \max\{\reg(I : m) + d, \reg( I+ \l m\r)\}$.
\end{enumerate}
    \end{lemma}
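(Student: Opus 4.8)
The plan is to get all three inequalities from a single short exact sequence together with the standard behaviour of Castelnuovo--Mumford regularity along short exact sequences. For the monomial $m$ of degree $d$, multiplication by $m$ gives the degree-shifted short exact sequence
\[
0\longrightarrow \big(R/(I:m)\big)(-d)\xrightarrow{\ \cdot m\ } R/I\longrightarrow R/(I+\langle m\rangle)\longrightarrow 0,
\]
which is exact because $mf\in I$ forces $f\in I:m$ and the cokernel is visibly $R/(mR+I)$. I would first record the three routine estimates for a short exact sequence $0\to A\to B\to C\to 0$ of finitely generated graded modules, all read off from the long exact sequence of $\operatorname{Tor}(-,\K)$: $\reg B\le\max\{\reg A,\reg C\}$, $\reg A\le\max\{\reg B,\reg C+1\}$, and $\reg C\le\max\{\reg B,\reg A-1\}$; together with the identity $\reg J=\reg(R/J)+1$ for a proper nonzero graded ideal $J$. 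Feeding the displayed sequence into the first estimate and adding $1$ yields exactly $\reg I\le\max\{\reg(I:m)+d,\ \reg(I+\langle m\rangle)\}$, which is part (iii); the degenerate cases ($m\in I$, or $I\in\{\langle 0\rangle,R\}$) are trivial and are checked separately.

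Parts (i) and (ii) concern $m=x$, $d=1$. The useful structural observation is that (ii) is a formal consequence of (i): applying the second estimate to $0\to(R/(I:x))(-1)\to R/I\to R/(I+\langle x\rangle)\to 0$ gives $\reg(R/(I:x))+1\le\max\{\reg(R/I),\ \reg(R/(I+\langle x\rangle))+1\}$, and once (i) is in hand the right-hand side is at most $\reg(R/I)+1$. So the real content is part (i), namely $\reg(I+\langle x\rangle)\le\reg(I)$.

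For (i), write $x=x_n$ and $S=\K[x_1,\dots,x_{n-1}]$. Since $I+\langle x\rangle=(I\cap S)R+xR$, where $I\cap S$ is the ideal of $S$ generated by the minimal monomial generators of $I$ not involving $x$, \Cref{reg sum} reduces (i) to the inequality $\reg_S(I\cap S)\le\reg_R(I)$ (regularity of $I\cap S$ being the same computed over $S$ or over $R$). Now $x$ is a nonzerodivisor on the domain $R$, hence on the submodule $I$, so tensoring a minimal free resolution of $I$ with $R/xR=S$ again gives a minimal free resolution, this time of $I/xI$ over $S$; thus the graded Betti numbers of $I$ over $R$ equal those of $I/xI$ over $S$, and in particular $\reg_R(I)=\reg_S(I/xI)$. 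Writing $I=\bigoplus_{j\ge 0}I_jx^j$ for the decomposition of $I$ by $x$-degree (so that $I_j$ is the ideal of $S$ generated by the monomials $u$ with $ux^j\in I$, and $I_0=I\cap S\subseteq I_1\subseteq\cdots$ is an ascending chain of monomial ideals of $S$, eventually constant), one checks the $S$-module isomorphism $I/xI\cong\bigoplus_{j\ge 0}(I_j/I_{j-1})(-j)$ with $I_{-1}:=\langle 0\rangle$; hence
\[
\reg_S(I/xI)=\max_{j\ge 0}\big(\reg_S(I_j/I_{j-1})+j\big)\ \ge\ \reg_S(I_0)=\reg_S(I\cap S),
\]
which finishes (i) and therefore (ii).

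The main obstacle is precisely this last reduction: the short-exact-sequence estimates among $R/I$, $R/(I:x)$ and $R/(I+\langle x\rangle)$ are circular — each of (i) and (ii) follows from the other but neither is available in isolation — so one genuinely has to exploit that $x$ is a variable (not an arbitrary element), via the nonzerodivisor reduction and the $x$-degree decomposition above. The rest is bookkeeping: the boundary cases $I\cap S=\langle 0\rangle$, $x\in I$, $I=\langle 0\rangle$, and the paper's convention $\reg(\langle 0\rangle)=1$, none of which cause any trouble.
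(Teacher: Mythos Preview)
The paper does not prove this lemma at all; it is quoted with the citation \cite[cf.~Lemma 2.10]{DHS} and used as a black box, so there is no in-paper argument to compare against. Your proof is correct. Part (iii) via the multiplication-by-$m$ short exact sequence and the standard $\operatorname{Tor}$ estimates is the usual route. Your observation that the short-exact-sequence bounds alone make (i) and (ii) circular is right, and your resolution of (i) via the nonzerodivisor reduction $\reg_R(I)=\reg_S(I/xI)$ together with the $x$-degree splitting $I/xI\cong\bigoplus_{j\ge 0}(I_j/I_{j-1})(-j)$ is a clean way to break the circularity; the key point, that the $j=0$ summand already gives $\reg_S(I\cap S)$, is exactly what is needed. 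The edge cases you flag (in particular the paper's convention $\reg(\langle 0\rangle)=1$) are indeed harmless.
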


We use the following basic result frequently without mentioning it explicitly.

\begin{lemma}\label{colon comma exchange}
    Let $I\subseteq R$ be a monomial ideal, and let $x_1,\ldots,x_r$ be some indeterminates in $R$. Then \[ (( I+\l x_1,\ldots,x_{r-1} \r):x_r)=(I:x_r)+\l x_1,\ldots,x_{r-1} \r.\]
\end{lemma}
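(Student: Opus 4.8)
The plan is to exploit the fact that every ideal appearing in the statement is a monomial ideal, so membership can be tested one monomial at a time. I would use two standard facts: for a monomial ideal $J$ and a monomial $m$, the colon $J:m$ is again monomial and a monomial $u$ lies in $J:m$ if and only if $mu\in J$; and a monomial lies in a sum $J_1+J_2$ of monomial ideals if and only if it lies in $J_1$ or in $J_2$.

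First I would dispatch the inclusion $\supseteq$, which requires no case analysis. Since $I\subseteq I+\l x_1,\ldots,x_{r-1}\r$, we immediately get $(I:x_r)\subseteq((I+\l x_1,\ldots,x_{r-1}\r):x_r)$. Moreover, for each $1\le i\le r-1$ we have $x_r x_i\in\l x_i\r\subseteq I+\l x_1,\ldots,x_{r-1}\r$, so $x_i\in((I+\l x_1,\ldots,x_{r-1}\r):x_r)$, and therefore $\l x_1,\ldots,x_{r-1}\r$ is contained in the left-hand side as well.

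For the reverse inclusion $\subseteq$, it suffices to take a monomial $u\in((I+\l x_1,\ldots,x_{r-1}\r):x_r)$ and show $u\in(I:x_r)+\l x_1,\ldots,x_{r-1}\r$. By the colon characterization, $x_r u\in I+\l x_1,\ldots,x_{r-1}\r$; since this is a sum of monomial ideals, either $x_r u\in I$, in which case $u\in I:x_r$, or $x_i\mid x_r u$ for some $i\le r-1$. In the latter case $x_i\neq x_r$, so $x_i\mid u$ and hence $u\in\l x_1,\ldots,x_{r-1}\r$. In either case $u$ belongs to the right-hand side, which completes the argument.

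I do not expect a real obstacle here; the only point needing (routine) attention is the reduction to monomials, i.e.\ the compatibility of colon-by-a-monomial and sums of monomial ideals with monomial-by-monomial testing, which would fail for arbitrary graded ideals. Equivalently, one can package the same computation as: colon by $x_r$ distributes over sums of monomial ideals, and $(\l x_1,\ldots,x_{r-1}\r:x_r)=\l x_1,\ldots,x_{r-1}\r$ because $\gcd(x_i,x_r)=1$ for $i<r$.
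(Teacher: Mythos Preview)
Your proof is correct. The paper itself does not supply a proof of this lemma; it is stated as a basic fact to be used freely, so there is nothing to compare against beyond noting that your monomial-by-monomial argument (equivalently, distributivity of colon over sums of monomial ideals together with $(\l x_1,\ldots,x_{r-1}\r:x_r)=\l x_1,\ldots,x_{r-1}\r$) is exactly the standard justification one would expect.
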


The following formulas for the regularity of edge ideals of paths and cycles can be easily derived from the thesis of Jacques \cite{JacquesThesis}.

\begin{lemma}\textup{\cite[cf. Theorem 7.7.34, Theorem 7.6.28]{JacquesThesis}} \label{cycle}
    Let $P_n$ and $C_n$ denote the path and cycle on $n$ vertices, respectively. Then $$\reg(I(P_n))=\reg(I(C_n))=2+\left\lfloor \frac{n-2}{3} \right\rfloor.$$
\end{lemma}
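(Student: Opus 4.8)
The plan is to prove \Cref{cycle} by reducing the cycle case to the path case and then establishing the path formula directly. First I would recall that for any graph $G$ with an edge $e=\{u,v\}$, one has the standard colon/addition exact sequence relating $\reg(I(G))$ to $\reg(I(G\setminus N_G[u]))+2$ and $\reg(I(G\setminus u))$; iterating this along a leaf of a path collapses $P_n$ to $P_{n-1}$, $P_{n-2}$, and $P_{n-3}$, which yields the recursion that the function $f(n)=2+\lfloor (n-2)/3\rfloor$ satisfies (namely $f(n)=\max\{f(n-3)+2, f(n-1)\}$ with $f(n-1)$ never dominating), together with the small base cases $\reg(I(P_1))=1$ (the zero ideal), $\reg(I(P_2))=2$, $\reg(I(P_3))=2$. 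An induction on $n$ then gives the path formula.

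For the cycle, I would pick an edge $e=\{x_1,x_n\}$ and apply \Cref{lemreg}(iii) with $m=x_1x_n$: this bounds $\reg(I(C_n))$ above by $\max\{\reg(I(C_n):x_1x_n)+2,\ \reg(I(C_n)+\langle x_1x_n\rangle)\}$. The colon ideal $(I(C_n):x_1x_n)$, after removing the vertices $x_2$ and $x_{n-1}$ that become redundant, is essentially $I(P_{n-4})$ plus variables, so by \Cref{lemreg}(i)–(ii) and the path formula its regularity is at most $f(n-4)+\text{(contributions already counted)}$; and $I(C_n)+\langle x_1x_n\rangle$ is, up to the extra generator $x_1x_n$, the edge ideal of the path $P_n$ obtained by deleting the edge $e$, whose regularity is $f(n)$ by the path case. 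Matching these against $f(n)=2+\lfloor(n-2)/3\rfloor$ gives the upper bound $\reg(I(C_n))\le f(n)$. For the matching lower bound, I would exhibit an induced matching of $C_n$ of size $\lfloor n/3\rfloor$ (take every third edge), since $\reg(I(G))\ge \nu_1(G)+1$ always, and check that $\nu_1(C_n)+1 = \lfloor n/3\rfloor + 1$ agrees with $2+\lfloor(n-2)/3\rfloor$ for all $n$; where this is slightly too weak (the residues of $n$ modulo $3$ must be handled case by case) one supplements with the known lower bound coming from a nonvanishing Betti number, or simply invokes the explicit computation in Jacques's thesis for the remaining residue classes.

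The honest shortcut, since the excerpt explicitly cites \cite[Theorem 7.7.34, Theorem 7.6.28]{JacquesThesis}, is that Jacques already computes $\reg(I(P_n))$ and $\reg(I(C_n))$ in closed form (in his notation involving the maximal ``saturated'' sets), and the only real content of \Cref{cycle} is the arithmetic identity translating his formulas into the uniform expression $2+\lfloor(n-2)/3\rfloor$. So the cleanest write-up is: quote the two theorems from Jacques verbatim, then verify by a short case analysis on $n \bmod 3$ that both stated values equal $2+\lfloor(n-2)/3\rfloor$.

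I expect the main obstacle to be purely bookkeeping rather than conceptual: keeping track of exactly which vertices get deleted in the colon and sum operations (and hence which polynomial subrings the intermediate ideals live in, so that \Cref{reg sum} is applied with the right shift of $-1$), and making sure the floor-function recursion $\lfloor(n-2)/3\rfloor$ versus $\lfloor(n-5)/3\rfloor+1$ is checked in all three residue classes so the induction closes. None of this is deep, but it is the kind of step where an off-by-one error in the index of the subpath would propagate into a wrong constant.
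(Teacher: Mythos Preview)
The paper does not prove this lemma at all: it is stated with a citation to Jacques's thesis and no argument is given. Your final paragraph---quote Jacques's two theorems and verify the arithmetic identity modulo~$3$---is therefore exactly what the paper does (indeed the paper does even less, omitting the arithmetic check entirely). So your ``honest shortcut'' is the correct comparison.

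That said, the direct argument you sketch for the cycle upper bound has a real error. You propose to apply \Cref{lemreg}(iii) with $m=x_1x_n$, but $x_1x_n$ is already a minimal generator of $I(C_n)$, so $I(C_n)+\langle x_1x_n\rangle = I(C_n)$ and the bound becomes $\reg(I(C_n))\le\max\{\,\cdots,\reg(I(C_n))\}$, which is vacuous. The sentence ``$I(C_n)+\langle x_1x_n\rangle$ is, up to the extra generator $x_1x_n$, the edge ideal of the path $P_n$'' is simply false. The fix is to take $m$ to be a \emph{vertex}, say $m=x_1$: then $(I(C_n):x_1)=\langle x_2,x_n\rangle+I(P_{n-3})$ and $I(C_n)+\langle x_1\rangle=\langle x_1\rangle+I(P_{n-1})$, and the recursion $\max\{f(n-3)+1,\,f(n-1)\}=f(n)$ does close. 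You are also right that the induced-matching lower bound fails exactly when $n\equiv 2\pmod 3$ (e.g.\ $C_5$ has $\nu_1=1$ but regularity~$3$), so for those residues one must either compute a Betti number directly or fall back on Jacques.
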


It is a classical result due to Fr\"oberg \cite{Fro} that the edge ideal of a graph $G$ has a linear resolution if and only if $G$ is co-chordal. Later on, Herzog, Hibi, and Zheng \cite{HHZ04} showed that any power of edge ideal of a co-chordal graph also has a linear resolution. Therefore, it follows from \cite[Lemma 1.2]{EHHS} that any square-free power of edge ideal of a co-chordal graph has a linear resolution.
\medskip

\noindent \textbf{Depth:} For the graded ideal $I\subseteq R=\mathbb K[x_1,\ldots,x_n]$, the depth of the finitely generated graded $R$ module $R/I$ is defined as $\d(R/I)=\min \{i\mid H_{\mathfrak{m}}^i(R/I)\neq 0\}$, where $H_{\mathfrak m}^i$ denotes the $i^{th}$ local cohomology module of $R/I$ with respect to the graded maximal ideal
$\mathfrak m=\l x_1,\ldots,x_n\r$. Let $G$ be a graph with $V(G)=\{x_1,\ldots,x_n\}$. Then we define the {\it dimension} of the graph $G$, denoted by $\mathrm{dim}(G)$, to be the Krull dimension of the quotient ring $R/I(G)$. We say that $G$ is {\it Cohen-Macaulay} if $\mathrm{dim}(G)=\d(R/I(G))$. In general, a graded ideal $I\subseteq R$ is called Cohen-Macaulay if $\mathrm{dim}(R/I)=\d(R/I)$.\par 

The following well-known results concerning $\d(R/I)$ for any graded ideal $I$, are essential in proving some of our main theorems in subsequent sections.

\begin{lemma}\label{depth sum}\cite{RHV}
        Let $I_1\subseteq R_1=\mathbb K[x_1,\ldots,x_m]$ and $I_2\subseteq R_2=\mathbb K[x_{m+1},\ldots, x_n]$ be two graded ideals. Consider the ideal $I=I_1R+I_2R\subseteq R=\mathbb K[x_1,\ldots,x_n]$. Then 
        \[ \d(R/I)=\d(R_1/I_1)+\d(R_2/I_2).
        \]
    \end{lemma}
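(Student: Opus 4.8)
The plan is to reduce the statement to the additivity of projective dimension under ``tensoring over the ground field $\K$'' and then invoke the (graded) Auslander--Buchsbaum formula. First I would record the structural observation that makes everything work: since the two variable sets are disjoint, $R=R_1\otimes_{\K}R_2$ as graded $\K$-algebras, and because $I=I_1R+I_2R$ one gets a graded isomorphism $R/I\cong (R_1/I_1)\otimes_{\K}(R_2/I_2)$.

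Next, let $\mathbf F_\bullet\to R_1/I_1$ and $\mathbf G_\bullet\to R_2/I_2$ be the graded minimal free resolutions over $R_1$ and $R_2$, respectively. The tensor product complex $\mathbf F_\bullet\otimes_{\K}\mathbf G_\bullet$ is a complex of free $R$-modules, since each $F_p\otimes_{\K}G_q=\bigoplus R(-a-b)$ is free over $R=R_1\otimes_{\K}R_2$. By the K\"unneth formula over the field $\K$ (no $\mathrm{Tor}$ correction terms appear, as everything is $\K$-flat), its homology is $H_\bullet(\mathbf F_\bullet)\otimes_{\K}H_\bullet(\mathbf G_\bullet)$, which is just $R/I$ concentrated in homological degree $0$. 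Hence $\mathbf F_\bullet\otimes_{\K}\mathbf G_\bullet$ is a graded free resolution of $R/I$, and it is minimal because the entries of its differentials lie in $\mathfrak m_1R+\mathfrak m_2R\subseteq \mathfrak m$. Reading off the length of the total complex gives $\pd_R(R/I)=\pd_{R_1}(R_1/I_1)+\pd_{R_2}(R_2/I_2)$.

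Finally, I would apply the graded Auslander--Buchsbaum formula: with $m_1=m$ and $m_2=n-m$ variables, $\d(R_i/I_i)=m_i-\pd_{R_i}(R_i/I_i)$ and $\d(R/I)=n-\pd_R(R/I)$. Substituting the projective-dimension identity from the previous step yields $\d(R/I)=\d(R_1/I_1)+\d(R_2/I_2)$. An alternative route that avoids Auslander--Buchsbaum is to use the K\"unneth formula for local cohomology, $H^k_{\mathfrak m}(R/I)\cong\bigoplus_{i+j=k}H^i_{\mathfrak m_1}(R_1/I_1)\otimes_{\K}H^j_{\mathfrak m_2}(R_2/I_2)$, and then observe that the least $k$ with $H^k_{\mathfrak m}(R/I)\neq 0$ is the sum of the least such indices for the two factors; the non-vanishing at that sum uses that a tensor product over a field of nonzero modules is nonzero.

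I expect the only genuinely delicate point to be the verification that $\mathbf F_\bullet\otimes_{\K}\mathbf G_\bullet$ is a \emph{minimal} free resolution --- that is, that tensoring minimal complexes over $\K$ keeps all differential entries inside the graded maximal ideal, and that the K\"unneth computation of its homology is correct with the expected grading shifts. Everything else is routine bookkeeping with $\mathbb N$-gradings, and the graded versions of Auslander--Buchsbaum and of the K\"unneth formula are entirely standard.
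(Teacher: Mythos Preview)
The paper does not supply a proof of this lemma: it is stated as a known result with a citation to \cite{RHV} and used as a black box throughout. Your argument is a correct and standard proof of this fact; both routes you sketch (tensor product of minimal free resolutions combined with Auslander--Buchsbaum, or K\"unneth for local cohomology) are valid, and the delicate point you flag about minimality is handled exactly as you indicate.
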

As an immediate consequence of the above lemma, we get the following.
    \begin{lemma}\label{lemdepthextravar}
    Let $J$ be a monomial ideal in $R'=\mathbb{K}[x_1,\ldots,x_m]$, $I=J+\langle x_{m+1},\ldots,x_{r}\rangle$ and $R=\mathbb{K}[x_1,\ldots,x_n]$ with $r\leq n$. Then
    $$\depth(R/I)=\depth(R'/J)+(n-r).$$
\end{lemma}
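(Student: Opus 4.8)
The plan is to reduce directly to \Cref{depth sum}. Partition the variables of $R$ into the two disjoint blocks $\{x_1,\dots,x_m\}$ and $\{x_{m+1},\dots,x_n\}$, and set $R_1=\mathbb{K}[x_1,\dots,x_m]=R'$ and $R_2=\mathbb{K}[x_{m+1},\dots,x_n]$. Put $I_1:=J\subseteq R_1$ and $I_2:=\langle x_{m+1},\dots,x_r\rangle\subseteq R_2$ (the variables $x_{m+1},\dots,x_r$ all lie in $R_2$ since $r\le n$, and by hypothesis $J$ only involves $x_1,\dots,x_m$). Then $I=I_1R+I_2R$, so \Cref{depth sum} applies and gives $\depth(R/I)=\depth(R_1/I_1)+\depth(R_2/I_2)=\depth(R'/J)+\depth(R_2/I_2)$.

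It remains to compute $\depth(R_2/I_2)$. Since $I_2$ is generated by a subset of the variables of $R_2$, we have $R_2/I_2\cong \mathbb{K}[x_{r+1},\dots,x_n]$, a polynomial ring in $n-r$ variables over $\mathbb{K}$; such a ring is Cohen--Macaulay of Krull dimension $n-r$, hence $\depth(R_2/I_2)=n-r$. Substituting into the previous display yields $\depth(R/I)=\depth(R'/J)+(n-r)$. The edge cases are consistent with this: when $r=n$ the quotient $R_2/I_2\cong\mathbb{K}$ has depth $0$, and when $r=m$ no new variables are adjoined, $I_2=\langle 0\rangle$ is the zero ideal of $R_2$, and $\depth(R_2/I_2)=n-m$.

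There is no genuine obstacle here; this is a routine corollary. The only ingredients are the additivity of depth across disjoint sets of variables, supplied by \Cref{depth sum}, and the standard fact that a polynomial ring in $N$ variables over a field has depth equal to $N$. The slight care needed is only bookkeeping of the variable blocks and checking the degenerate cases $r=m$ and $r=n$.
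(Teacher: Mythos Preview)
Your proof is correct and follows exactly the approach the paper intends: the paper states this lemma as an immediate consequence of \Cref{depth sum}, and your argument spells out precisely that reduction, computing $\depth(R_2/I_2)=n-r$ from $R_2/I_2\cong\mathbb{K}[x_{r+1},\dots,x_n]$.
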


\begin{lemma}\textup{\cite[cf. Lemma 2.2, Lemma 4.1, Theorem 4.3]{CHHKTT}}\label{lemdepth}
    Let $I$ be a monomial ideal in $R=\mathbb K[x_1,\ldots,x_n]$ and let $f\in R$ be an arbitrary monomial. Then we have the following.
    \begin{enumerate}
        \item[(i)] $\d(R/I)\in\{\d(R/(I+\l f\r)), \d(R/(I:f))\}$,

        \item[(ii)] $\d(R/I)\ge\min\{\d(R/(I+\l f\r)), \d(R/(I:f))\}$,
    
        \item[(iii)] $\d(R/I)\le \d(R/(I:f))$ if $f\notin I$.
    \end{enumerate}
\label{lemdepthequal}
\end{lemma}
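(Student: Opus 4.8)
All three assertions are standard homological facts about monomial ideals, and the plan is to derive them from a single short exact sequence (as is done in \cite{CHHKTT}). The key object is the short exact sequence of finitely generated $\mathbb Z$-graded $R$-modules
\[
0 \;\longrightarrow\; \bigl(R/(I:f)\bigr)(-d) \;\xrightarrow{\;\cdot f\;}\; R/I \;\longrightarrow\; R/(I+\langle f\rangle) \;\longrightarrow\; 0, \qquad d=\deg f,
\]
in which the first nonzero map is multiplication by $f$: it is injective with preimage of $I$ equal to $(I:f)$, and its image in $R/I$ is $(I+\langle f\rangle)/I$, so the cokernel is $R/(I+\langle f\rangle)$. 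A grading shift does not change depth, so the outer terms contribute the depths $\depth(R/(I:f))$ and $\depth(R/(I+\langle f\rangle))$, and the left-hand module is zero precisely when $f\in I$.

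With this sequence in hand, part (ii) falls out in one line from the depth lemma for short exact sequences — the depth of the middle term is at least the minimum of the depths of the outer ones — and it is vacuous when $f\in I$. For part (iii) I would first note that $f\notin I$ forces $R/(I:f)\ne 0$, hence of finite depth; then I would combine the depth-lemma estimate $\depth(R/(I:f))\ge\min\{\depth(R/I),\,\depth(R/(I+\langle f\rangle))+1\}$ with control on the change of depth in passing from $I$ to $I+\langle f\rangle$, or, equivalently, chase the long exact sequence in local cohomology attached to the displayed sequence. For part (i) the target is the sharp dichotomy $\depth(R/I)\in\{\depth(R/(I:f)),\depth(R/(I+\langle f\rangle))\}$, which I would again extract from that long exact local cohomology sequence by tracking the least cohomological degree in which each of the three modules is nonzero.

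The step I expect to be the genuine obstacle is (i) — and the subtle half of (iii): the dichotomy in (i) is \emph{false} for an arbitrary short exact sequence $0\to A\to B\to C\to 0$ (e.g.\ $0\to\mathfrak m\to R\to\mathbb K\to 0$ over $R=\mathbb K[x,y]$ has $\depth B=2\notin\{1,0\}$), so the proof must use that $B=R/I$ with $I$ monomial and that the two comparison modules arise from $I$ by the very specific operations of colon by, and addition of, the monomial $f$. Concretely I would exploit the $\mathbb Z^n$-graded decomposition $R/I \cong R/(I+\langle f\rangle)\,\oplus\, f\cdot(R/I)$ of $\mathbb K$-vector spaces, with $f\cdot(R/I)\cong(R/(I:f))(-d)$, together with Takayama-type descriptions of the multigraded components of $H_{\mathfrak m}^i(R/I)$ in terms of simplicial homology; alternatively, polarize so that $f$ becomes a single variable and reduce to that case. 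That structural input is the crux, the remainder being routine long-exact-sequence bookkeeping, and it is all carried out in \cite{CHHKTT}.
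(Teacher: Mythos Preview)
The paper does not give its own proof of this lemma: it is stated as a quoted result with the citation \cite[cf.\ Lemma 2.2, Lemma 4.1, Theorem 4.3]{CHHKTT} and no argument is supplied. So there is nothing to compare your proposal against at the level of proof details; your sketch via the short exact sequence $0\to R/(I:f)\to R/I\to R/(I+\langle f\rangle)\to 0$ and the depth lemma is exactly the standard route taken in the cited source, and your identification of the delicate point---that the dichotomy in (i) requires the monomial structure rather than being a general short-exact-sequence fact---is on target.
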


\begin{remark}\label{depth highest power}
    Let $G$ be a simple graph. Then it has been proved in \cite[Theorem 1.7]{ErFi12345} that $I(G)^{[\nu(G)]}$ is a polymatroidal ideal (see also \cite[Theorem 1.1]{ErFiForest} for a different proof). Thus, by \cite[Proposition 1.7]{EHHM12}, $\depth(R/I(G)^{[k]})=2k-1$.
\end{remark}

\subsection{Technical lemmas}
In this subsection, we derive some important lemmas, which are used in this paper extensively, and will help in the future study on square-free powers of edge ideals. 

\begin{lemma}\label{lemcolonedge}
    Let $G$ be a graph with vertices $x, y$ such that $\{x,y\}\in E(G)$ and assume that $2\le k\le \nu(G)$.
    \begin{enumerate}[label=(\roman*)]
        \item $ I(G)^{[k]}+\l x \r=I(G')^{[k]}+\l x\r$, where $G'=G\setminus x$.

        \item $(I(G)^{[k]}:x)=\sum_{y\in N_G(x)} yI(G\setminus \{x,y\})^{[k-1]} + I(G\setminus N_G[x])^{[k]}$.

        \item Suppose that $N_G(x)\setminus \{y\}=\{u_1,\ldots ,u_r\}$, $N_G(y)\setminus\{x\}=\{v_1,\ldots ,v_s\}$. For each $1\leq i\leq r$, let $G_{i}$ be the graph on the vertex set $V(G)\setminus\{x,y\}$ and the edge set $E(G_{i})=E(G\setminus\{x,y\})\cup\{\{u_i,v_j\}\mid 1\leq j\leq s, u_i\neq v_j \}$. Then $(I(G)^{[k]}:xy)=\sum_{i=1}^{r}I(G_{i})^{[k-1]}$.
    \end{enumerate}
    \label{sqf colon}
\end{lemma}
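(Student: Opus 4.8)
\textbf{Proof proposal for \Cref{sqf colon}.}

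The plan is to prove all three statements by directly identifying the minimal monomial generating sets of both sides, using the combinatorial description $I(G)^{[k]}=\langle \mathbf{x}_A \mid A=\bigcup_{e\in M}e,\ M \text{ a } k\text{-matching of } G\rangle$ established in the introduction. For part (i), I would argue that a $k$-matching $M$ of $G$ with $\mathbf{x}_{\bigcup_{e\in M}e}\notin \langle x\rangle$ uses no edge incident to $x$, hence is a $k$-matching of $G'=G\setminus x$; conversely every $k$-matching of $G'$ is one of $G$. This gives equality of the generating sets modulo $\langle x\rangle$, so $I(G)^{[k]}+\langle x\rangle = I(G')^{[k]}+\langle x\rangle$. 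This is the easy case.

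For part (ii), a square-free monomial $u$ lies in $(I(G)^{[k]}:x)$ iff $xu$ (after reducing the square in case $x\mid u$) is divisible by some generator $\mathbf{x}_{\bigcup_{e\in M}e}$ of $I(G)^{[k]}$. I would split into two cases according to whether that generator involves an edge at $x$. If $M$ contains an edge $\{x,y\}$ with $y\in N_G(x)$, then $M\setminus\{x,y\}$ is a $(k-1)$-matching of $G\setminus\{x,y\}$, contributing the term $y\, I(G\setminus\{x,y\})^{[k-1]}$; conversely, given a $(k-1)$-matching $M'$ of $G\setminus\{x,y\}$, adjoining the edge $\{x,y\}$ shows $y\cdot\mathbf{x}_{\bigcup_{e\in M'}e}\in (I(G)^{[k]}:x)$. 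If $M$ uses no edge at $x$, then $x$ is irrelevant and $u$ must already be a generator (or multiple thereof) of $I(G)^{[k]}$ supported away from $N_G[x]$; here the slightly delicate point is that $x$ being available means the relevant matching can be enlarged, but since we are already at a $k$-matching not touching $x$, the monomial simply belongs to $I(G\setminus N_G[x])^{[k]}$ — one must check that we may harmlessly discard $N_G(x)$ from the vertex set because any generator not killed by passing to the colon and not of the first type cannot use a vertex of $N_G(x)$. Combining, $(I(G)^{[k]}:x)=\sum_{y\in N_G(x)}y\,I(G\setminus\{x,y\})^{[k-1]}+I(G\setminus N_G[x])^{[k]}$. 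I would use \Cref{colon comma exchange}-style bookkeeping implicitly but the core is matching-combinatorial.

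For part (iii), which I expect to be the main obstacle, a square-free monomial $u$ with $\gcd(u,xy)=1$ lies in $(I(G)^{[k]}:xy)$ iff $xy\,u\in I(G)^{[k]}$, i.e. $xy\,u$ is divisible by $\mathbf{x}_{\bigcup_{e\in M}e}$ for some $k$-matching $M$. The subtle case analysis concerns how $M$ covers the vertices $x$ and $y$: (a) $M$ contains $\{x,y\}$ itself, contributing $I(G\setminus\{x,y\})^{[k-1]}$; (b) $M$ contains an edge $\{x,u_i\}$ and an edge $\{y,v_j\}$ with $u_i\neq v_j$, contributing — after deleting these two edges — a $(k-2)$-matching of $G\setminus\{x,y,u_i,v_j\}$ together with the available vertices $u_i,v_j$; (c) $M$ contains $\{x,u_i\}$ but no edge at $y$ (or symmetrically), contributing a $(k-1)$-matching of $G\setminus\{x,y\}$ with $u_i$ available. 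The key algebraic identity to verify is that the union over all these cases collapses exactly to $\sum_{i=1}^r I(G_i)^{[k-1]}$, where $G_i$ adds to $G\setminus\{x,y\}$ all edges $\{u_i,v_j\}$. The heart of the argument: choosing the vertex $u_i$ as the neighbor of $x$ used by $M$, the leftover matching on $G\setminus\{x,y\}$ may additionally use the vertex $u_i$ freely, and it may "pretend" to use an edge $\{u_i,v_j\}$ precisely when $M$ had an edge $\{y,v_j\}$ — so $(k-1)$-matchings of $G_i$ biject with the relevant $M$'s after removing $x$ (and reinterpreting the $y$-edge as a $u_i$-edge). I would prove the two inclusions separately: for $\subseteq$, given $M$ and its $x$-edge $\{x,u_i\}$, build a $(k-1)$-matching of $G_i$; for $\supseteq$, given a $(k-1)$-matching $N$ of $G_i$, if $N$ uses an edge $\{u_i,v_j\}$ replace it with $\{y,v_j\}$ and add $\{x,u_i\}$ to get a $k$-matching of $G$ witnessing membership, while if $N$ avoids $u_i$ or uses a $G\setminus\{x,y\}$-edge at $u_i$, add the edge $\{x,u_i\}$ (legitimate since $x$ is a leaf-like available vertex) — one must double-check $u_i$ is unused so that $\{x,u_i\}$ can be appended, which holds because edges of $G_i$ at $u_i$ other than the $\{u_i,v_j\}$'s are exactly the $G$-edges at $u_i$, and if $N$ uses such an edge we instead need a different bookkeeping; here careful attention to whether $u_i\in N_G(y)$ (so $u_i=v_j$ for some $j$, excluded in the definition of $G_i$) is needed. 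The main difficulty is therefore handling the overlap/degeneracy when a common neighbor $u_i=v_j$ exists and ensuring the union $\sum_i$ does not over- or under-count; I would resolve it by noting that the definition of $G_i$ already excludes the edge $\{u_i,v_j\}$ when $u_i=v_j$, and that membership in the sum only requires one index $i$ to work. Finally I would remark that parts (ii) and (iii) are the square-free analogues of the standard colon formulas for ordinary edge-ideal powers, with the matching condition replacing plain divisibility.
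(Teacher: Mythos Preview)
Your overall strategy---working directly with $k$-matchings and splitting into cases according to how the matching meets $x$ (respectively $x$ and $y$)---is exactly the paper's approach. However, two of your case analyses contain genuine gaps.

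\medskip

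\textbf{Part (ii).} Your claim that ``any generator not killed by passing to the colon and not of the first type cannot use a vertex of $N_G(x)$'' is false. A $k$-matching $M$ containing no edge incident to $x$ may still contain an edge $\{y,z\}$ with $y\in N_G(x)$ and $z\notin N_G[x]$; the corresponding monomial is then supported on $y$. The paper handles this missing subcase by observing that if some $y\in N_G(x)$ occurs as a vertex of $M$ (say $y=a_t$), then dropping the edge $\{a_t,b_t\}$ from $M$ yields a $(k-1)$-matching of $G\setminus\{x,y\}$, and since $y\mid f$ one gets $f\in y\,I(G\setminus\{x,y\})^{[k-1]}$. Only when no vertex of $N_G(x)$ appears in $M$ does one land in $I(G\setminus N_G[x])^{[k]}$.

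\medskip

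\textbf{Part (iii), inclusion $\supseteq$.} Your construction in the subcase ``$N$ avoids $u_i$'' is wrong. If $N$ is a $(k-1)$-matching of $G_i$ using no new edge $\{u_i,v_j\}$ and not covering $u_i$, then $N\cup\{\{x,u_i\}\}$ is indeed a $k$-matching of $G$, but its product is $f\cdot xu_i$, which does \emph{not} divide $f\cdot xy$ (since $u_i\nmid f$ and $u_i\neq y$). So this matching does not witness $f\in (I(G)^{[k]}:xy)$. The paper's fix is simpler: when $N$ uses no new edge $\{u_i,v_j\}$, it is already a $(k-1)$-matching of $G\setminus\{x,y\}$, so adjoin the edge $\{x,y\}$ itself to obtain a $k$-matching with product exactly $f\cdot xy$. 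This single move disposes of both of your problematic subcases (``$N$ avoids $u_i$'' and ``$N$ uses a $G\setminus\{x,y\}$-edge at $u_i$'') at once, with no further bookkeeping needed.

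\medskip

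With these two corrections your argument becomes essentially the paper's proof.
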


\begin{proof}
    (i) This is a straightforward observation.
    
    (ii) It is easy to see that $\sum_{y\in N_G(x)} yI(G\setminus \{x,y\})^{[k-1]} + I(G\setminus N_G[x])^{[k]}\subseteq (I(G)^{[k]}:x)$. Now let $f\in (I(G)^{[k]}:x)$. Then $fx\in I(G)^{[k]}$, and hence there is a $k$-matching in $G$, say $\{\{a_1,b_1\},\ldots , \{a_k,b_k\} \}\subseteq E(G)$ such that $(a_1b_1)\cdots (a_kb_k)\mid fx$. We consider the following two cases.

    \noindent
    \textbf{Case-I:} Let $\{a_t,b_t\}=\{x,y\}$ for some $y\in N_G(x)$, and $1\leq t\leq k$. Then \[y\cdot (a_1b_1)\cdots (a_{t-1}b_{t-1})(a_{t+1}b_{t+1})\cdots (a_kb_k)\mid f ,\] and hence $f\in  yI(G\setminus \{x,y\})^{[k-1]}$.

    \noindent
    \textbf{Case-II:} Assume that $\{a_t,b_t\}\neq \{x,y\}$ for all $y\in N_G(x)$, and for all $1\leq t\leq k$. In that case, if for each $y\in N_G(x)$, $y\neq a_t$, and $y\neq b_t$ for all $1\leq t\leq k$, then $\{a_t,b_t\}\in E(G\setminus N_G[x])$ for all $1\leq t\leq k$. Thus, $f\in I(G\setminus N_G[x])^{[k]}$. Otherwise, if $y=a_t$ for some $1\leq t\leq k$, then $y\cdot (a_1b_1)\cdots (a_{t-1}b_{t-1})(a_{t+1}b_{t+1})\cdots (a_kb_k)\mid f $, and hence $f\in yI(G\setminus \{x,y\})^{[k-1]}$.   

    (iii) Let $f\in \mathcal{G}(I(G_{i})^{[k-1]})$ for some $1\leq i\leq r$. Then there is a $(k-1)$-matching in $G_{i}$, say $\{\{a_1,b_1\},\ldots \{a_{k-1},b_{k-1}\}\}\subseteq E(G_{i})$ such that $f=(a_1b_1)\cdots (a_{k-1}b_{k-1})$. If for all $1\leq t\leq k-1$, $\{a_t,b_t\}\neq \{u_i,v_j\}$, then $f\cdot xy\in I(G)^{[k]}$. Otherwise if for some $1\leq t\leq k-1$, $\{a_t,b_t\}= \{u_i,v_j\}$, then we can write $f\cdot xy=(xu_i)(yv_j)(a_1b_1)\cdots (a_{t-1}b_{t-1})(a_{t+1}b_{t+1})\cdots (a_{k-1}b_{k-1})\in I(G)^{[k]}$. Hence in any case, $f\in (I(G)^{[k]}:xy)$. 

    Conversely, let $g\in (I(G)^{[k]}:xy)$. Then $g\cdot xy\in I(G)^{[k]}$, and hence, there is a $k$-matching in $G$, say $\{\{a_1,b_1\},\ldots \{a_k,b_k\}\}\subseteq E(G)$ such that $(a_1b_1)\cdots (a_{k}b_{k})\mid g\cdot xy$. Again if $\{x,y\}= \{a_t,b_t\}$ for some $1\leq t\leq k$, then $g\in I(G')^{[k-1]}$, where $G'=G\setminus \{x,y\}$. Note that $I(G')^{[k-1]}\subseteq I(G_{i})^{[k-1]}$ for all $1\leq i\leq r$ and thus $g\in \sum_{i=1}^rI(G_{i})^{[k-1]}$. Now, assume that $\{x,y\}\neq \{a_t,b_t\}$ for any $1\leq t\leq k$. If $x,y\notin \{a_i, b_i\mid 1\leq i\leq k\}$, then $g\in I(G\setminus\{x,y\})^{[k]}$ and hence for any $1\leq i\leq r$, $g\in I(G_i)^{[k]}\subseteq \sum_{i=1}^{r}I(G_{i})^{[k-1]}$. Next, if $x\in \{a_i, b_i\mid 1\leq i\leq k\}$ but $y\notin \{a_i, b_i\mid 1\leq i\leq k\}$, then without any loss of generality, assume that $a_1=x$. In that case, $(a_2b_2)(a_3b_3)\cdots (a_kb_k)\mid g $ and hence $g\in I(G\setminus\{x,y\})^{[k-1]}\subseteq \sum_{i=1}^{r}I(G_{i})^{[k-1]}$. Finally if $x,y\in \{a_i, b_i\mid 1\leq i\leq k\}$ and $\{x,y\}\neq \{a_t,b_t\}$ for any $1\leq t\leq k$, without any loss of generality, we can assume that $x=a_1, y=a_2$. Then $b_1\in N_G(x)\setminus\{y\}$, $b_2\in N_G(y)\setminus\{x\}$, and $b_1\neq b_2$. So, $b_1=u_i, b_2=v_j$ for some $1\leq i\leq r,1\leq j\leq s$ and $u_i\neq v_j$. Therefore, $(xu_i)(yv_j)(a_3b_3)\cdots (a_kb_k)\mid g\cdot xy$, which implies $(u_iv_j)(a_3b_3)\cdots (a_kb_k)\mid g $ and hence $g\in I(G_{i})^{[k-1]}$. This completes the proof.
\end{proof}

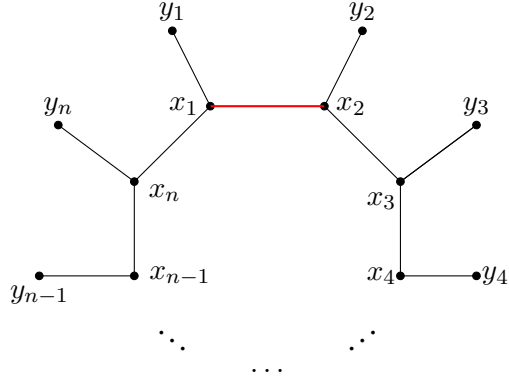
\begin{figure}
    \centering
    \begin{tikzpicture}
        [scale=.50]
        \draw [fill] (0,2.5) circle [radius=0.1]; % x_1
        \draw [fill] (-1,4.5) circle [radius=0.1]; % y_1
        \draw [fill] (5,0.5) circle [radius=0.1]; % x_3
        \draw [fill] (7,2) circle [radius=0.1]; % y_3
        \draw [fill] (5,-2) circle [radius=0.1]; % x_4
        \draw [fill] (7,-2) circle [radius=0.1]; % y_4
        \draw [fill] (3,2.5) circle [radius=0.1]; % x_2
        \draw [fill] (4,4.5) circle [radius=0.1]; % w_1
        \draw [fill] (-2,0.5) circle [radius=0.1];% x_n
        \draw [fill] (-4,2) circle [radius=0.1]; %y_n
        \draw [fill] (-2,-2) circle [radius=0.1];% x_n-1
        \draw [fill] (-4.5,-2) circle [radius=0.1]; %y_n-1
        \node at (-4,2.5) {$y_n$};
        \node at (-1.2,0.2) {$x_{n}$};
        \node at (-0.7,2.5) {$x_1$};
        \node at (-1,5) {$y_1$};
        \node at (4.5,0) {$x_{3}$};
        \node at (3.7,2.5) {$x_2$};
        \node at (4,5) {$y_2$};
        %\node at (1.5,-6) {$G$};
        \node at (7,2.5) {$y_3$};
        \node at (4.5,-2) {$x_{4}$};
        \node at (7.5,-2) {$y_4$};
        \node at (-0.8,-2) {$x_{n-1}$};
        \node at (-4.5,-2.5) {$y_{n-1}$};
        \node at (-1,-3.5) {$\ddots$};
        \node at (4,-3.5) {\reflectbox{$\ddots$}};
        \node at (1.5,-4.5) {$\hdots$};
        \draw [thick,red] (0,2.5)--(3,2.5);
        \draw (0,2.5)--(-2,0.5);
        \draw (4,4.5)--(3,2.5);
        \draw (3,2.5)--(5,0.5);
        \draw (0,2.5)--(-1,4.5);
        \draw (-4,2)--(-2,0.5);
        \draw (7,2)--(5,0.5);
        \draw (-2,-2)--(-2,0.5);
        \draw (-2,-2)--(-4.5,-2);
        \draw (7,2)--(5,0.5);
        \draw (5,0.5)--(5,-2)--(7,-2);
    \end{tikzpicture}
    \caption{A whiskered cycle $G$}
    \label{fig:enter-label0}
\end{figure}

\begin{figure}
    \centering
    \begin{tikzpicture}
        [scale=.45]
        \draw [fill] (0,2.5) circle [radius=0.1]; % x_1
        \draw [fill] (-1,4.5) circle [radius=0.1]; % y_1
        \draw [fill] (5,0.5) circle [radius=0.1]; % x_3
        \draw [fill] (7,2) circle [radius=0.1]; % y_3
        \draw [fill] (5,-2) circle [radius=0.1]; % x_4
        \draw [fill] (7,-2) circle [radius=0.1]; % y_4
        %\draw [fill] (3,2.5) circle [radius=0.1]; % x_2
        %\draw [fill] (4,4.5) circle [radius=0.1]; % y_2
        \draw [fill] (-2,0.5) circle [radius=0.1];% x_n
        \draw [fill] (-4,2) circle [radius=0.1]; %y_n
        \draw [fill] (-2,-2) circle [radius=0.1];% x_n-1
        \draw [fill] (-4.5,-2) circle [radius=0.1]; %y_n-1
        \node at (-4,2.5) {$y_n$};
        \node at (-1.3,0.2) {$x_{n}$};
        \node at (-0.7,2.5) {$y_2$};
        \node at (-1,5) {$y_1$};
        \node at (4.5,0) {$x_{3}$};
        %\node at (3.5,2.5) {$x_2$};
        %\node at (4,5) {$y_2$};
        \node at (7,2.5) {$y_3$};
        \node at (4.5,-2) {$x_{4}$};
        \node at (7.5,-2) {$y_4$};
        \node at (-0.8,-2) {$x_{n-1}$};
        \node at (-4.5,-2.5) {$y_{n-1}$};
        \node at (-1,-3.5) {$\ddots$};
        \node at (4,-3.5) {\reflectbox{$\ddots$}};
        \node at (1.5,-4.5) {$\hdots$};
        %\draw [thick,red] (0,2.5)--(3,2.5);
        \draw [thick,red] (0,2.5)--(-2,0.5);
        %\draw (4,4.5)--(3,2.5);
        %\draw (3,2.5)--(5,0.5);
        \draw [thick,red] (0,2.5)--(-1,4.5);
        \draw  (-4,2)--(-2,0.5);
        \draw (7,2)--(5,0.5);
        \draw (-2,-2)--(-2,0.5);
        \draw (-2,-2)--(-4.5,-2);
        \draw (7,2)--(5,0.5);
        \draw (5,0.5)--(5,-2)--(7,-2);
    \end{tikzpicture}
    \hspace{2em}
    \begin{tikzpicture}
        [scale=.45]
        %\draw [fill] (0,2.5) circle [radius=0.1]; % x_1
        %\draw [fill] (-1,4.5) circle [radius=0.1]; % y_1
        \draw [fill] (5,0.5) circle [radius=0.1]; % x_3
        \draw [fill] (7,2) circle [radius=0.1]; % y_3
        \draw [fill] (5,-2) circle [radius=0.1]; % x_4
        \draw [fill] (7,-2) circle [radius=0.1]; % y_4
        %\draw [fill] (3,2.5) circle [radius=0.1]; % x_2
        \draw [fill] (4,3.5) circle [radius=0.1]; % y_2
        \draw [fill] (-2,0.5) circle [radius=0.1];% x_n
        \draw [fill] (-4,2) circle [radius=0.1]; %y_n
        \draw [fill] (-2,-2) circle [radius=0.1];% x_n-1
        \draw [fill] (-4.5,-2) circle [radius=0.1]; %y_n-1
        \node at (-4,2.5) {$y_n$};
        \node at (-1.3,0) {$x_{n}$};
        %\node at (-0.5,2.5) {$x_1$};
        %\node at (-1,5) {$y_1$};
        \node at (4.5,0) {$x_{3}$};
        %\node at (3.5,2.5) {$x_2$};
        \node at (4,4) {$y_1$};
        \node at (7,2.5) {$y_3$};
        \node at (4.5,-2) {$x_{4}$};
        \node at (7.5,-2) {$y_4$};
        \node at (-0.8,-2) {$x_{n-1}$};
        \node at (-4.5,-2.5) {$y_{n-1}$};
        \node at (-1,-3.5) {$\ddots$};
        \node at (4,-3.5) {\reflectbox{$\ddots$}};
        \node at (1.5,-4.5) {$\hdots$};
        \draw  [thick,red] (5,0.5)--(-2,0.5);
        %\draw (0,2.5)--(-2,0.5);
        %\draw (4,4.5)--(3,2.5);
        %\draw (3,2.5)--(5,0.5);
        \draw [thick,red] (5,0.5)--(4,3.5);
        \draw (-4,2)--(-2,0.5);
        \draw (7,2)--(5,0.5);
        \draw (-2,-2)--(-2,0.5);
        \draw (-2,-2)--(-4.5,-2);
        \draw (7,2)--(5,0.5);
        \draw (5,0.5)--(5,-2)--(7,-2);
    \end{tikzpicture}
    \caption{The graphs $ G_1$ and $G_2$}
    \label{fig:enter-label1}
\end{figure}
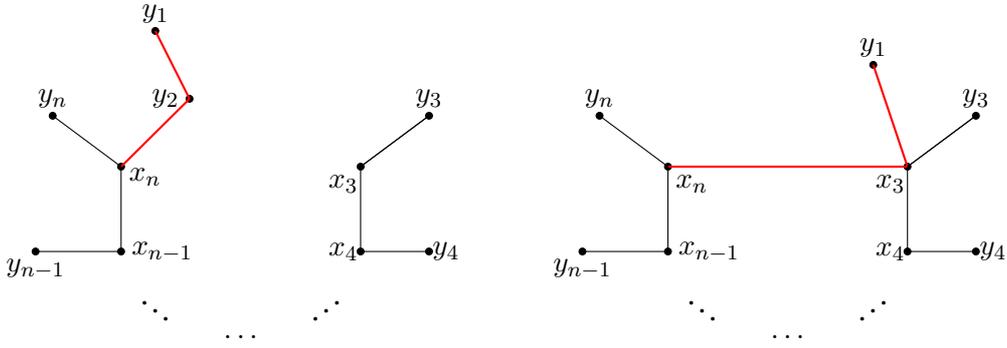

\begin{remark}
    Following the proof of \Cref{lemcolonedge}(iii) it is easy to observe that $(I(G)^{[k]}:xy)$ can also be expressed in the following alternative way. For each $1\le j\le s$, let $H_j$ be the graph on the vertex set $V(G)\setminus\{x,y\}$, and edge set $E(H_{j})=E(G\setminus\{x,y\})\cup\{\{u_i,v_j\}\mid 1\leq i\leq r, u_i\neq v_j \}$. Then $(I(G)^{[k]}:xy)=\sum_{j=1}^{s}I(H_{j})^{[k-1]}$. 
\end{remark}

\begin{example}
  Consider the graphs $G$, $G_1$ and $G_2$ as in \Cref{fig:enter-label0} and \Cref{fig:enter-label1}. Then, applying \Cref{lemcolonedge}(iii), we have $(I(G)^{[k]}:x_1x_2)=I(G_1)^{[k-1]}+I(G_2)^{[k-1]}$ for $1\leq k\leq \nu(G)$.
\end{example}

\begin{remark}\label{second power colon} 
    Note that if $k=2$ in \Cref{lemcolonedge}(iii), then $(I(G)^{[2]}:xy)=I(G')$, where $V(G')=V(G)\setminus \{x,y\}$, and $E(G')=\cup_{i=1}^rE(G_i)$. In other words, $E(G')=E(G\setminus \{x,y\})\cup\{\{a,b\}\mid a\in N_G(x),b\in N_G(y), a\neq b,a,b\notin\{x,y\}\}$. This expression of $(I(G)^{[2]}:xy)$ also appeared in \cite[Lemma 2.6]{EHHS}. 
\end{remark}

\begin{lemma}\label{colon with variable}
    Let $G$ be a graph and $x\in V(G)$ such that $N_G(x)=\{y_1,\ldots,y_n\}$. Then for all $1\le k\le \nu(G)$, $((I(G)^{[k]}+\l xy_1,\ldots,xy_n\r):x)=I(G\setminus\{x,y_1,\ldots,y_n\})^{[k]}+\l y_1,\ldots,y_n\r$.
\end{lemma}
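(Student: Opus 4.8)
The plan is to reduce the computation to \Cref{lemcolonedge}(ii) by exploiting that, for monomial ideals, the colon operation distributes over sums. Setting $A=I(G)^{[k]}$ and $B=\langle xy_1,\ldots,xy_n\rangle$, a monomial lies in $A+B$ if and only if it lies in $A$ or in $B$, so $(A+B):x=(A:x)+(B:x)$; and since $(B:x)=\langle y_1,\ldots,y_n\rangle$ (here we use that $y_i\neq x$ as $G$ is simple), this gives
\[
\bigl((I(G)^{[k]}+\langle xy_1,\ldots,xy_n\rangle):x\bigr)=\bigl(I(G)^{[k]}:x\bigr)+\langle y_1,\ldots,y_n\rangle .
\]
Everything then comes down to understanding $(I(G)^{[k]}:x)$.

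For $k\ge 2$ I would apply \Cref{lemcolonedge}(ii). Since $N_G(x)=\{y_1,\ldots,y_n\}$ we have $N_G[x]=\{x,y_1,\ldots,y_n\}$, and the lemma gives
\[
(I(G)^{[k]}:x)=\sum_{i=1}^{n} y_i\,I\bigl(G\setminus\{x,y_i\}\bigr)^{[k-1]}+I\bigl(G\setminus\{x,y_1,\ldots,y_n\}\bigr)^{[k]} .
\]
The key point is that each term $y_i\,I(G\setminus\{x,y_i\})^{[k-1]}$ lies in $\langle y_i\rangle\subseteq\langle y_1,\ldots,y_n\rangle$ and is therefore absorbed once $\langle y_1,\ldots,y_n\rangle$ is added; what remains is precisely $I(G\setminus\{x,y_1,\ldots,y_n\})^{[k]}+\langle y_1,\ldots,y_n\rangle$, the claimed right-hand side. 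The case $k=1$ is handled separately and is immediate: each $xy_i$ already lies in $I(G)$, so the left-hand side is simply $(I(G):x)=I(G\setminus N_G[x])+\langle y_1,\ldots,y_n\rangle$, which is again the claimed ideal.

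Alternatively, a single argument valid for all $k$ is available by checking the two inclusions directly. Containment $\supseteq$ is clear, since $x\cdot y_i=xy_i$ lies in the left-hand ideal and $I(G\setminus\{x,y_1,\ldots,y_n\})^{[k]}\subseteq I(G)^{[k]}$. For $\subseteq$, take a monomial $f$ with $xf$ in the left-hand ideal and choose a generator dividing $xf$: either some $xy_i$ (whence $y_i\mid f$), or a generator $\mathbf{x}_A$ of $I(G)^{[k]}$ with $A=\bigcup_{e\in M}e$ for a $k$-matching $M$; in the latter case, if $x\in A$ the unique edge of $M$ meeting $x$ is $\{x,y_j\}$ for some $j$, forcing $y_j\mid f$, if $x\notin A$ but $y_i\in A$ for some $i$ then $y_i\mid\mathbf{x}_A\mid f$, and if $A$ avoids $x,y_1,\ldots,y_n$ altogether then $M$ is a $k$-matching of $G\setminus\{x,y_1,\ldots,y_n\}$ and $\mathbf{x}_A\mid f$ with $\mathbf{x}_A\in I(G\setminus\{x,y_1,\ldots,y_n\})^{[k]}$. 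The one step that needs care — in either approach — is this absorption: the mixed terms $y_i\,I(G\setminus\{x,y_i\})^{[k-1]}$, and equivalently the matching generators $\mathbf{x}_A$ of $I(G)^{[k]}$ whose support meets $\{y_1,\ldots,y_n\}$ but not $x$, are all swallowed by $\langle y_1,\ldots,y_n\rangle$, so that only matchings entirely avoiding $N_G[x]$ contribute genuinely new generators. Beyond that point the verification is routine.
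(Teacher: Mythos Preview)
Your proof is correct and follows essentially the same approach as the paper: split the colon over the sum to get $(I(G)^{[k]}:x)+\langle y_1,\ldots,y_n\rangle$, invoke \Cref{lemcolonedge}(ii), and observe that the terms $y_i\,I(G\setminus\{x,y_i\})^{[k-1]}$ are absorbed by $\langle y_1,\ldots,y_n\rangle$. Your write-up is in fact more careful than the paper's one-line justification, explicitly treating the case $k=1$ (which falls outside the stated hypotheses of \Cref{lemcolonedge}) and spelling out the absorption step.
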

\begin{proof}
The proof of this lemma easily follows from \Cref{lemcolonedge}(ii) and the fact that $((I(G)^{[k]}+\l xy_1,\ldots,xy_n\r):x)=(I(G)^{[k]}:x)+\l y_1,\ldots,y_n\r$.
\end{proof}

\noindent \textbf{Note:} Whenever we consider the edge ideal $I(G)$ of a graph $G$, we write $R$ to mean the corresponding ambient ring $\mathbb{K}[x\mid x\in V(G)]$.

\section{Square-free powers of forests}\label{sec tree}

In this section, we study the regularity and depth of square-free powers of edge ideals of various classes of trees. More precisely, we provide an explicit formula for $\reg(I(G)^{[k]})$ in terms of $n$ and $k$, where $G$ is either a $P_n$ or it is obtained from $P_n$ by attaching multiple whiskers at each vertex of $P_n$. Furthermore, deriving the depth of $R/I(G)^{[k]}$ for a Cohen-Macaulay forest $G$, we show that $R/I(G)^{[k]}$ is Cohen-Macaulay for all $k\geq 1$. 

\medskip

The notion of admissible matching of a graph in the context of square-free powers of edge ideals has been introduced by Erey and Hibi in \cite{ErHi1}. The admissible matching number turns out to be a perfect combinatorial invariant to estimate the regularity of square-free powers of forests (see also \cite{CFL1}). Let $k$ and $n$ be two positive integers. We say that a sequence $(a_1,\ldots ,a_n)$ of integers is $k$-admissible if $a_i\geq 1$ for all $1\leq i\leq n$, and $\sum_{i=1}^na_i\leq n+k-1$.

\begin{definition}\cite[Definition 12]{ErHi1}
    Let $G$ be a graph with matching number $\nu(G)$. Let $M$ be a matching of $G$. Then for any $1\leq k\leq \nu(G)$, we say that $M$ is a $k$-admissible matching if there exists a sequence $M_1,\ldots ,M_r$ of non-empty subsets of $M$ such that 
    \begin{enumerate}
        \item $M=M_1\cup \cdots \cup M_r$,
        \item $M_i\cap M_j=\emptyset$ for all $i\neq j$,
        \item for all $i\neq j$, if $e_i\in M_i$ and $e_j\in M_j$, then $\{e_i,e_j\}$ is a gap in $G$,
        \item the sequence $(|M_1|,\ldots , |M_r|)$ is $k$-admissible,
        \item the induced subgraph of $G$ on $\cup_{e\in M_i}e$ is a forest for all $1\leq i\leq r$.
    \end{enumerate}
\end{definition}

\begin{definition}\cite[Definition 13]{ErHi1}\label{admissible matching definition}
    The $k$-admissible matching number of a graph $G$, denoted by $\mathrm{aim}(G, k)$, is defined as
    \[\mathrm{aim}(G, k) := \max\{|M| \mid M \text{ is a $k$-admissible matching of $G$}\}\]
    for $1 \leq k \leq \nu(G)$. If $G$ has no $k$-admissible matching, then $\mathrm{aim}(G, k)$ is defined to be $0$.
\end{definition}

In \cite{ErHi1}, the authors conjectured that if $G$ is any forest, then $\reg(I(G)^{[k]})=\mathrm{aim}(G,k)+k$ for all $1\leq k\leq \nu(G)$. Later, Crupi, Ficarra, and Lax \cite{CFL1} proved this conjecture for forests. However, it is not straightforward to determine the combinatorial invariant $\mathrm{aim}(G,k)$, even for simpler classes of trees like paths, whiskered paths, etc. In the following proposition, we give a numerical formula for the regularity of square-free powers of paths.
\begin{proposition}\label{sqf path}
    Let $P_n$ denote the path graph on $n$ vertices. Then for $1\le k\le \nu(P_n)=\lfloor\frac{n}{2}\rfloor$,
    \[
    \reg(I(P_n)^{[k]})= 2k+\left\lfloor \frac{n-2k}{3} \right\rfloor.
    \]
\end{proposition}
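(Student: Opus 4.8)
The plan is to induct on $n$ and $k$, using the colon/sum exact-sequence machinery from \Cref{lemreg}(iii) together with the splitting-off behaviour of an endpoint edge of the path. Write $P_n$ with vertices $x_1,\ldots,x_n$ in order, and set $r(n,k):=2k+\lfloor (n-2k)/3\rfloor$. For the base cases: when $k=1$, \Cref{cycle} gives $\reg(I(P_n))=2+\lfloor (n-2)/3\rfloor=r(n,1)$, and when $2k\ge n$ (so $k=\nu(P_n)=\lfloor n/2\rfloor$) the ideal $I(P_n)^{[\nu(P_n)]}$ is polymatroidal by \Cref{depth highest power}, hence has linear resolution, so $\reg=2k=r(n,k)$ since $0\le n-2k\le 1$. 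For small $n$ not covered, one checks directly.

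For the inductive step with $2\le k$ and $n>2k$, I would take the edge $\{x_1,x_2\}$ (or better, work with a leaf-related monomial) and apply \Cref{lemreg}(iii). A convenient choice is to use $m=x_1x_2$: then $\reg I(P_n)^{[k]}\le \max\{\reg(I(P_n)^{[k]}:x_1x_2)+2,\ \reg(I(P_n)^{[k]}+\langle x_1x_2\rangle)\}$, and conversely one argues each of these contributes the needed lower bound via an appropriate short exact sequence. By \Cref{lemcolonedge}(iii), $(I(P_n)^{[k]}:x_1x_2)=I(P_{n-3})^{[k-1]}$ on the vertices $\{x_4,\ldots,x_n\}$ (since $x_1$ has the single non-$x_2$ neighbour — none — and $x_2$ has the single non-$x_1$ neighbour $x_3$, so $G_1$ is just $P_n\setminus\{x_1,x_2,x_3\}=P_{n-3}$), whence by induction its regularity is $r(n-3,k-1)$ and $r(n-3,k-1)+2 = r(n,k)$, exactly. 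For the other term, one peels further: using \Cref{lemcolonedge}(i), $I(P_n)^{[k]}+\langle x_1\rangle = I(P_n\setminus x_1)^{[k]}+\langle x_1\rangle$, and $P_n\setminus x_1=P_{n-1}$, so by \Cref{lemreg}(i) and induction $\reg(I(P_n)^{[k]}+\langle x_1x_2\rangle)\le \max\{\cdots\}$ resolves into $r(n-1,k)$-type and $r(n-2,k-1)$-type terms, each $\le r(n,k)$. Tracking the floors, one verifies $r(n-1,k)\le r(n,k)$ and $r(n-3,k-1)+2=r(n,k)$, giving the upper bound $\reg I(P_n)^{[k]}\le r(n,k)$.

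For the lower bound $\reg I(P_n)^{[k]}\ge r(n,k)$, the cleanest route is to exhibit a $k$-admissible matching of $P_n$ of size $r(n,k)-k=k+\lfloor(n-2k)/3\rfloor$ and invoke the Crupi--Ficarra--Lax theorem ($\reg I(G)^{[k]}=\mathrm{aim}(G,k)+k$ for forests). Concretely: take $k$ disjoint edges to form the "base" and then greedily add induced edges (each genuinely a gap, separated by a vertex) from the remaining path segments; a segment of $j$ unused vertices after deleting the $k$ base edges and their separating vertices yields $\lfloor j/3\rfloor$ extra gap-edges grouped into singleton blocks $M_i$, and the admissibility count $\sum|M_i|\le |M|+k-1$ is easily met. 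Alternatively, avoid combinatorics entirely and get the lower bound from the short exact sequences themselves, since \Cref{lemreg}(iii) equality-tracking shows the colon term $r(n-3,k-1)+2$ is actually attained.

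The main obstacle I anticipate is the bookkeeping with the nested floor functions: one must confirm the identity $r(n-3,k-1)+2=r(n,k)$ and the inequalities $r(n-1,k)\le r(n,k)$, $r(n-2,k-1)\le r(n,k)-1$ across all residues of $n-2k \pmod 3$, and ensure the base cases of the double induction (small $n$, and the boundary $n=2k$, $n=2k+1$, $n=2k+2$) are genuinely covered before the recursion kicks in. A secondary subtlety is making sure each application of \Cref{lemreg}(iii) really produces a \emph{two}-sided bound — the upper bound is immediate, but pinning the regularity down to the exact value requires either the admissible-matching lower bound or a careful argument that the maximum in \Cref{lemreg}(iii) is achieved by the colon term, which is where I would put the most care.
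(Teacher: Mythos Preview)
Your overall strategy---the Crupi--Ficarra--Lax lower bound via an explicit $k$-admissible matching, combined with an inductive upper bound using \Cref{lemreg}(iii) and \Cref{lemcolonedge}---is exactly the paper's approach. But your colon computation is wrong, and the error propagates to the key arithmetic identity.

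Specifically, $(I(P_n)^{[k]}:x_1x_2)$ is \emph{not} $I(P_{n-3})^{[k-1]}$. In \Cref{lemcolonedge}(iii) the graph $G_i$ has vertex set $V(G)\setminus\{x,y\}$, so only $x_1$ and $x_2$ are removed---$x_3$ stays. Taking $x=x_2$, $y=x_1$ (so that $r=1$, $u_1=x_3$, $s=0$), the single graph $G_1$ is $P_n\setminus\{x_1,x_2\}$ on $\{x_3,\ldots,x_n\}$, with no new edges added. Hence $(I(P_n)^{[k]}:x_1x_2)=I(P_{n-2})^{[k-1]}$. Consequently, your claimed identity $r(n-3,k-1)+2=r(n,k)$ is false: when $n-2k\equiv 0\pmod 3$ it gives $r(n,k)-1$. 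The correct identity is $r(n-2,k-1)+2=r(n,k)$, which holds for all residues and is precisely what the paper uses (there with the edge $x_{n-2}x_{n-1}$, whose colon is likewise a $P_{n-2}$). With this correction your colon branch gives exactly $r(n,k)$.

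Your treatment of the $+\langle x_1x_2\rangle$ branch is too sketchy to stand as written. If you peel with the variable $x_1$ you land on $\reg\le\max\{r(n-2,k)+1,\,r(n-1,k)\}$, and $r(n-2,k)+1>r(n,k)$ when $n-2k\equiv 2\pmod 3$, so one more layer is needed. The paper handles this by a two-step peel at the penultimate vertex (first adding both incident edges, then coloning by the vertex via \Cref{colon with variable}), which produces $I(P_{n-3})^{[k]}$, $I(P_{n-2})^{[k]}$, and $I(P_{n-3})^{[k-1]}$ terms; each of these satisfies the required inequality. You should either mirror that decomposition or spell out explicitly which sequence of colons and sums gives the ``$r(n-1,k)$-type and $r(n-2,k-1)$-type terms'' you allude to.
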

\begin{proof}
        First, we consider the case $k=\lfloor\frac{n}{2}\rfloor$. Then by \cite[Theorem 5.1]{BHZN}, $I(P_n)^{[k]}$ has a linear resolution, and thus, $\reg(I(P_n)^{[k]})= 2k$. Also, if $k=1$, then the formula follows from \Cref{cycle}. Therefore, we may assume that $1<k<\lfloor\frac{n}{2}\rfloor$. Note that, the path graph $P_n$ is an example of a tree. Therefore, by \cite[Theorem 3.6]{CFL1}, $\reg(I(P_n)^{[k]})=\mathrm{aim}(P_n,k)+k$. We first show that $\mathrm{aim}(P_n,k)\ge  k+\lfloor \frac{n-2k}{3} \rfloor$, which will imply $\reg(I(P_n)^{[k]})\ge 2k+\lfloor \frac{n-2k}{3} \rfloor$. Let $E(P_n)=\{\{x_i,x_{i+1}\}\mid 1\le i\le n-1\}$. Consider the set $M'=\{\{x_1,x_2\},\{x_3,x_4\},\ldots,\{x_{2k-1},x_{2k}\}\}$. Now $G\setminus\{x_1,\ldots,x_{2k+1}\}=P_{n-2k-1}$. Therefore, $\nu_1(G\setminus\{x_1,\ldots,x_{2k+1}\})=\lfloor\frac{n-2k}{3}\rfloor$. Let $M$ be an induced matching of $G\setminus\{x_1,\ldots,x_{2k},x_{2k+1}\}$ such that $|M|=\lfloor\frac{n-2k}{3}\rfloor$. Suppose $M=\left\{e_1,\ldots,e_{\lfloor\frac{n-2k}{3}\rfloor}\right\}$. For each $e_i\in M$, let us consider the set $M_i=\{e_i\}$. Then $|M'|+\sum_{i=1}^{\lfloor \frac{n-2k}{3}\rfloor}|M_i|=k+\lfloor \frac{n-2k}{3}\rfloor\le k+(\lfloor \frac{n-2k}{3}\rfloor+1)-1$. Hence, by \Cref{admissible matching definition}, $M'\sqcup\left(\sqcup_{i=1}^{\lfloor \frac{n-2k}{3}\rfloor}M_i\right)$ forms a $k$-admissible matching of $G$. Thus, $\mathrm{aim}(G,k)\ge k+\lfloor \frac{n-2k}{3} \rfloor$.

        Now, we prove that $\reg(I(P_n)^{[k]})\leq 2k+\lfloor \frac{n-2k}{3}\rfloor$ by induction on $n$. Note that when $n\leq 3$, we have the desired formula from \Cref{cycle}. Thus, we assume that $n\geq 4$. First, we consider the ideal $I(P_n)^{[k]}+\langle x_{n-2}x_{n-1}, x_{n-1}x_n\rangle$. Then by \Cref{colon with variable}, $((I(P_n)^{[k]}+\langle x_{n-2}x_{n-1}, x_{n-1}x_n\rangle):x_{n-1})=I(P_n\setminus \{x_{n-2},x_{n-1},x_{n}\} )^{[k]}+\langle x_{n-2},x_{n}\rangle=I(P_{n-3})^{[k]}+\langle x_{n-2},x_{n}\rangle$. Therefore, by the induction hypothesis, $\reg((I(P_n)^{[k]}+\langle x_{n-2}x_{n-1}, x_{n-1}x_n\rangle):x_{n-1})=2k+\lfloor \frac{n-3-2k}{3}\rfloor=2k+\lfloor \frac{n-2k}{3}\rfloor-1$. Now by \Cref{lemcolonedge}, $I(P_n)^{[k]}+\langle x_{n-2}x_{n-1}, x_{n-1}x_n, x_{n-1}\rangle=I(P_{n-2})^{[k]}+\langle x_{n-1}\rangle$, and thus, $\reg(I(P_n)^{[k]}+\langle x_{n-2}x_{n-1}, x_{n-1}x_n, x_{n-1}\rangle)=2k+\lfloor \frac{n-2-2k}{3}\rfloor\leq 2k+\lfloor \frac{n-2k}{3}\rfloor$. Hence, by \Cref{lemreg}(iii), we get $\reg(I(P_n)^{[k]}+\langle x_{n-2}x_{n-1}, x_{n-1}x_n\rangle)\leq 2k+\lfloor \frac{n-2k}{3}\rfloor$. Now let us consider the ideal $((I(P_n)^{[k]}+\langle x_{n-2}x_{n-1}\rangle) : x_{n-1}x_n)=\langle x_{n-2}\rangle + (I(P_n\setminus x_{n-2})^{[k]} : x_{n-1}x_n)$. By \Cref{colon comma exchange} \& \ref{lemcolonedge}, $((I(P_n)^{[k]}+\langle x_{n-2}x_{n-1}\rangle) : x_{n-1}x_n)=\langle x_{n-2}\rangle + I(P_{n}\setminus\{x_{n-2},x_{n-1},x_n\})^{[k-1]}=\langle x_{n-2}\rangle + I(P_{n-3})^{[k-1]}$. Again by the induction hypothesis, we have $\reg((I(P_n)^{[k]}+\langle x_{n-2}x_{n-1}\rangle) : x_{n-1}x_n)=2k-2+\lfloor \frac{n-3-2k+2}{3}\rfloor\leq 2k-2+\lfloor \frac{n-2k}{3}\rfloor$. Thus, by \Cref{lemreg}(iii), we obtain $\reg(I(P_n)^{[k]}+\langle x_{n-2}x_{n-1}\rangle)\leq 2k+\lfloor \frac{n-2k}{3}\rfloor$. Finally, $(I(P_n)^{[k]}: x_{n-2}x_{n-1})=I(G_1)^{[k-1]}$, where $G_1\cong P_{n-2}$ by \Cref{sqf colon}(iii). Hence, $\reg(I(P_n)^{[k]}: x_{n-2}x_{n-1})=2k-2+\lfloor \frac{n-2-2k+2}{3}\rfloor=2k-2+\lfloor \frac{n-2k}{3}\rfloor$. Therefore, by \Cref{lemreg}(iii), we obtain $\reg(I(P_n)^{[k]})\leq 2k+\lfloor \frac{n-2k}{3}\rfloor$, which completes the proof.
\end{proof}

Next, we compute the regularity of square-free powers of a class of trees, which contain the class of whiskered paths. More specifically, we consider the graphs which are obtained from path graphs by attaching multiple whiskers on each vertex of the corresponding path. To begin with, we fix the following notation. For positive integers $m,r_1,\ldots,r_m$, let $G_{1,m,r_1,\ldots,r_m}$ denote the graph on the vertex set $\{x_i,y_{i,j}\mid 1\le i\le m, 1\le j\le r_i\}$ with the edge set
    \[
    E(G_{1,m,r_1,\ldots,r_m})=\{\{x_i,x_{i+1}\},\{x_i,y_{i,j}\},\{x_m,y_{m,l}\}\mid 1\le i\le m-1,1\le j\le r_i,1\le l\le r_m\}.
    \]
Based on these notations, we define the following class of graphs:
\[
\overline{W}(P_m):=\{G\mid G\cong G_{1,m,r_1,\ldots,r_m}\text{ for some }r_1,\ldots,r_m\ge 1\}.
\]

\begin{proposition}\label{proppathwhisk}
    Let $G\in \overline{W}(P_m)$. Then $\reg(I(G)^{[k]})=2k+\lfloor \frac{m-k}{2} \rfloor$ for each $1\le k\le\nu(G)= m$. 
\end{proposition}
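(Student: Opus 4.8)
The plan is to proceed by induction on $m$, mirroring the strategy used in the proof of \Cref{sqf path}, but now accounting for the whiskers hanging off the last vertex $x_m$. For the base cases, note that $\nu(G)=m$ always holds (a matching of size $m$ is obtained by taking one whisker edge at each $x_i$), and when $k=m$ the ideal $I(G)^{[\nu(G)]}$ is polymatroidal, hence has a linear resolution, giving $\reg(I(G)^{[m]})=2m=2m+\lfloor\frac{m-m}{2}\rfloor$. When $k=1$ this is the ordinary edge ideal $I(G)$; since $G$ is a tree one can compute $\reg(I(G))$ directly (for instance via Hà--Van Tuyl / Zheng, or by the same colon-and-sum recursion) and check it equals $2+\lfloor\frac{m-1}{2}\rfloor$. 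So assume $1<k<m$.

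For the lower bound $\reg(I(G)^{[k]})\ge 2k+\lfloor\frac{m-k}{2}\rfloor$, I would invoke \cite[Theorem 3.6]{CFL1} (valid since $G$ is a forest), reducing the problem to showing $\mathrm{aim}(G,k)\ge k+\lfloor\frac{m-k}{2}\rfloor$. The idea is to build an explicit $k$-admissible matching: take the $k-1$ whisker edges $\{x_1,y_{1,1}\},\ldots,\{x_{k-1},y_{k-1,1}\}$ together with one more whisker edge at $x_k$, grouped as a single block $M_1$ of size $k$ placed so as to ``use up'' the admissibility budget, and then, on the induced subgraph $G\setminus N_G[x_1,\ldots,x_k]$ — which is a graph in $\overline{W}(P_{m-k})$ (or close to it) — pick a large induced matching. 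A gap of whisker edges in a path-with-whiskers forces the vertices to be far apart in the underlying path; counting shows an induced matching of size $\lfloor\frac{m-k}{2}\rfloor$ can be chosen (every other remaining path vertex contributes a pairwise-gap whisker edge). Adding these $\lfloor\frac{m-k}{2}\rfloor$ singleton blocks to $M_1$ and checking the admissibility inequality $k+\lfloor\frac{m-k}{2}\rfloor\le m+k-1$ (equivalently $\lfloor\frac{m-k}{2}\rfloor\le m-1$, clear for $m\ge 2$) yields the claimed bound.

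For the upper bound I would run the colon/sum recursion from \Cref{lemcolonedge} and \Cref{colon with variable} applied to the edge $\{x_{m-1},x_m\}$ (or to a whisker edge at $x_m$), exactly as in the proof of \Cref{sqf path}, and repeatedly apply \Cref{lemreg}(iii). At each splitting step the resulting ideals are of the form $I(G')^{[k]}$, $I(G')^{[k-1]}$, possibly plus ideals generated by variables, where $G'$ is again a graph in $\overline{W}(P_{m'})$ for some $m'<m$ (after deleting $x_{m-1},x_m$ and some whiskers and using \Cref{lemdepthextravar}-type reductions for the extra variables, together with \Cref{reg sum} and \Cref{lemreg}(i)). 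One then checks that each branch of the recursion produces regularity at most $2k+\lfloor\frac{m-k}{2}\rfloor$: the ``$k-1$ on $P_{m'}$'' branches drop the exponent but also shrink $m$ enough, and the ``$k$ on $P_{m'}$'' branch shrinks $m$ by $1$ or $2$; in each case the floor arithmetic $\lfloor\frac{(m-c)-(k-d)}{2}\rfloor+\text{shift}$ works out, using $\lfloor\frac{a-2}{2}\rfloor=\lfloor\frac{a}{2}\rfloor-1$.

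The main obstacle I anticipate is the bookkeeping of the recursion for the upper bound: unlike the clean path case, deleting $x_m$ leaves its whiskers as isolated vertices (harmless, handled by \Cref{lemdepthextravar}/\Cref{reg sum}), but the colon ideal $(I(G)^{[k]}:x_{m-1}x_m)$ obtained from \Cref{lemcolonedge}(iii) introduces new edges among the former neighbors of $x_{m-1}$ and $x_m$ — here $x_{m-2}$ together with the whisker vertices — so one must verify that the resulting graph still lies in $\overline{W}(P_{m-2})$ or at worst is a disjoint union of such graphs with some isolated vertices, so that the induction hypothesis applies. Choosing to split on a pendant (whisker) edge at $x_m$ rather than on $\{x_{m-1},x_m\}$ may simplify this, since a whisker vertex $y_{m,1}$ has the single neighbor $x_m$, making $(I(G)^{[k]}:x_my_{m,1})$ and the other pieces more transparent. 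Once the correct edge to split on is fixed and the induced subgraphs are identified as members of $\overline{W}(P_{m'})$, the remaining work is the routine floor-function verification.
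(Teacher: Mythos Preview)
Your approach is essentially the paper's: both invoke \cite[Theorem 3.6]{CFL1} for the lower bound via an explicit $k$-admissible matching built from $k$ whisker edges plus an induced matching on the tail, and both establish the upper bound by induction on $m$ through the colon/sum recursion at an end vertex of the underlying path (the paper works at $x_1$, you at $x_m$, which is symmetric). Your instinct to split on a whisker edge rather than $\{x_{m-1},x_m\}$ is exactly what the paper does; it systematically builds a chain of ideals $L=I(G)^{[k]}+\langle\,\text{all edges at }x_1\,\rangle$, then $L_1$ with only the whisker edges added, then $L_{2,i}$ peeling off one whisker at a time, so that every colon produces an ideal of the form $I(G')^{[k']}$ with $G'\in\overline{W}(P_{m'})$ for some $m'<m$.

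One correction to your lower-bound bookkeeping: the $k$-admissibility condition for a sequence $(a_1,\ldots,a_n)$ is $\sum a_i\le n+k-1$, where $n$ is the number of \emph{blocks}, not the number of path vertices. With your single block $M_1$ of size $k$ and $\lfloor\frac{m-k}{2}\rfloor$ singleton blocks, $n=1+\lfloor\frac{m-k}{2}\rfloor$, and the required inequality is $k+\lfloor\frac{m-k}{2}\rfloor\le \bigl(1+\lfloor\frac{m-k}{2}\rfloor\bigr)+k-1$, which holds with equality. The inequality you wrote, $k+\lfloor\frac{m-k}{2}\rfloor\le m+k-1$, is a strictly weaker condition and would not by itself establish admissibility. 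Also, \Cref{lemdepthextravar} concerns depth; for the regularity reductions you want \Cref{reg sum} or simply \Cref{lemreg}(i).
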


\begin{proof}
    The proof is similar to that of \Cref{sqf path}. First, note that if $k=m$, then $I(G)^{[k]}$ has a linear resolution by \cite[Theorem 5.1]{BHZN}, and thus, $\reg(I(G)^{[k]})= 2k$. Also, if $k=1$, then the formula follows from \cite[Corollary 2.13]{Zheng} since $G$ is a tree and $\nu_1(G)=2+\lfloor\frac{m-1}{2}\rfloor$. Therefore, we may assume that $1<k<m$. Now, by \cite[Theorem 3.6]{CFL1}, $\reg(I(G)^{[k]})=\mathrm{aim}(G,k)+k$. We first show that $\mathrm{aim}(G,k)\ge  k+\lfloor \frac{m-k}{2} \rfloor$, which gives $\reg(I(G)^{[k]})\ge 2k+\lfloor \frac{m-k}{2} \rfloor$. Without loss of generality, let $G=G_{1,m,r_1,\ldots,r_m}$ for some positive integers $r_1,\ldots,r_m$. Consider the set $M'=\{\{x_1,y_{1,1}\},\ldots,\{x_k,y_{k,1}\}\}$. Now $G\setminus\{x_1,\ldots,x_{k+1}\}\in \overline{W}(P_{m-k-1})$. Hence, it is easy to see that $\nu_1(G\setminus\{x_1,\ldots,x_{k+1}\})=\lfloor\frac{m-k}{2}\rfloor$. Let $M$ be an induced matching of $G\setminus\{x_1,\ldots,x_{k+1}\}$ such that $|M|=\lfloor\frac{m-k}{2}\rfloor$. Suppose $M=\left\{e_1,\ldots,e_{\lfloor\frac{m-k}{2}\rfloor}\right\}$. For each $e_i\in M$, let us consider the set $M_i=\{e_i\}$. Then $|M'|+\sum_{i=1}^{\lfloor \frac{m-k}{2}\rfloor}|M_i|=\lfloor \frac{m-k}{2}\rfloor+k\le k+(\lfloor \frac{m-k}{2}\rfloor+1)-1$. Hence, by \Cref{admissible matching definition}, $M'\sqcup\left(\sqcup_{i=1}^{\lfloor \frac{m-k}{2}\rfloor}M_i\right)$ forms a $k$-admissible matching of $G$. Thus, $\mathrm{aim}(G,k)\ge k+\lfloor \frac{m-k}{2} \rfloor$.
    \par 

Now, we will show $\reg(I(G)^{[k]})\leq 2k+\lfloor \frac{m-k}{2}\rfloor$ by induction on $m$. If $m=1$, then $k=m=1$, and $I(G)^{[1]}=I(G)$ has a linear resolution as $G^c$ is a co-chordal graph. Hence, $\reg (I(G)^{[1]})=2$. So, let us assume $m\geq 2$. First, consider the ideal $L=I(G)^{[k]}+\langle x_1y_{1,i}, x_1x_2\mid 1\leq i\leq r_1\rangle$. Note that $(L:x_1)=\langle x_2, y_{1,i} \mid 1\leq i\leq r_1\rangle + I(G_1)^{[k]}$, where $G_1=G\setminus \{x_1,x_2,y_{1,i} \mid 1\leq i\leq r_1\}$ (by \Cref{lemcolonedge}). Observe that $G_1\in \overline{W}(P_{m-2})$, and if $k=m$ or $k=m-1$, then $I(G_1)^{[k]}=\l 0\r$. Hence, by the induction hypothesis, we get $\reg(L:x_1)=\reg(I(G_1)^{[k]})\leq 2k+\lfloor \frac{m-2-k}{2}\rfloor= 2k-1+\lfloor \frac{m-k}{2}\rfloor$. Also, by \Cref{lemcolonedge}, $L+\langle x_1\rangle= I(G_2)^{[k]}+\langle x_1\rangle$, where $G_2=G\setminus \{x_1\}$, and $I(G_2)^{[k]}=\l 0\r$, when $k=m$. Thus, by the induction hypothesis, $\reg(L+\langle x_1\rangle)=\reg(I(G_2))\leq 2k+\lfloor \frac{m-1-k}{2}\rfloor \leq 2k+\lfloor \frac{m-k}{2}\rfloor$. Consequently, by applying \Cref{lemreg}, we obtain $\reg(L)\leq 2k+\lfloor \frac{m-k}{2}\rfloor$. Next, we consider the ideal 
    \[
    L_1=I(G)^{[k]}+\langle x_1y_{1,i}\mid 1\leq i\leq r_1\rangle.
    \] 
    Note that $L_1+\l x_1x_2\r=L$. Now, by \Cref{lemcolonedge}, $(L_1:x_1x_2)=\langle y_{1,i} \mid 1\leq i\leq r_1\rangle + (I(G)^{[k]}:x_1x_2)=\langle y_{1,i} \mid 1\leq i\leq r_1\rangle + I(G_1)^{[k-1]}$. Therefore, by the induction hypothesis, $\reg(L_1:x_1x_2)= \reg(I(G_1)^{[k-1]})\leq  2k-2+\lfloor \frac{m-2-k}{2}\rfloor\leq 2k-2+\lfloor \frac{m-k}{2}\rfloor$. Hence, applying \Cref{lemreg} again, we obtain 
    \[
    \reg(L_1)\leq 2k+\left\lfloor \frac{m-k}{2}\right\rfloor.
    \]
Next, for $1\le i\le r_1$ consider the ideal $L_{2,i}=I(G)^{[k]}+\l x_1y_{1j}\mid 1\le j\le i-1\r$. Note that $L_{2,1}=I(G)^{[k]}$ and $L_{2,r_1}+\l x_1y_{1r_1}\r=L_1$. Moreover, for each $1\le i\le r_1-1$, $L_{2,i}+\l x_1y_{1i}\r=L_{2,i+1}$. Again, for $1\le i\le r_1$, $(L_{2,i}:x_1y_{1i})=\l y_{1,j}\mid 1\le j\le i-1\r+I(G\setminus x_1)^{[k-1]}$ (by \Cref{lemcolonedge}). Since $G\setminus x_1=P_{m-1}$, using the induction hypothesis and repeatedly applying \Cref{lemreg}, we obtain 
$\reg(I(G)^{[k]})=\reg(L_{2,1})\le 2k+\lfloor\frac{m-k}{2}\rfloor.$ 
\end{proof}

The next theorem and the corollary afterwards are the main results of this section, which deal with the depth and Cohen-Macaulay property of square-free powers of edge ideals of Cohen-Macaulay forests. Note that, if $G$ is a Cohen-Macaulay forest of dimension $m$, then by \cite[cf. Theorem 2.4]{CM}, $G=W(T_m)$ for some forest $T_m$ on $m$ number of vertices. Thus, for such a graph $G$, we have $\vert V(G)\vert =2m$ and $\nu(G)=m$.

   \begin{theorem}\label{thmcmtreedepth}
    If $G$ is a Cohen-Macaulay forest of dimension $m$, then we have
    \[
    \depth(R/I(G)^{[k]})=m+k-1,
    \]
    for all $1\leq k\leq \nu(G)=m$.
\end{theorem}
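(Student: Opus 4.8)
The plan is to induct on the dimension $m$ of $G$, using the structural fact recalled just above that $G = W(T_m)$ for a forest $T_m$ on $m$ vertices, so that $|V(G)| = 2m$ and $\nu(G) = m$. The two extreme values of $k$ are immediate and also cover $m \le 2$: for $k = 1$, $\depth(R/I(G)) = \dim(R/I(G)) = m$ since $G$ is Cohen-Macaulay, and for $k = \nu(G) = m$, $\depth(R/I(G)^{[m]}) = 2m - 1$ by \Cref{depth highest power}; both equal $m+k-1$. So assume $m \ge 3$ and $2 \le k \le m-1$. Choose a leaf $\ell$ of $T_m$ with $T_m$-neighbour $z$, and let $y$ be the whisker at $\ell$, so $N_G(\ell) = \{z, y\}$. (If $T_m$ has no edge at all, i.e. $G$ is a disjoint union of edges, the argument below goes through verbatim after deleting every occurrence of $z$, and is strictly easier; this is the only place the edgeless case needs separate words.)

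The mechanism is the colon/deletion calculus of \Cref{lemcolonedge} and \Cref{colon with variable}, fed into \Cref{lemdepth}, together with \Cref{depth sum} and \Cref{lemdepthextravar} for bookkeeping. The first computation: by \Cref{lemcolonedge}(iii) applied to the edge $\{\ell, y\}$ (whose endpoint $y$ has degree one), $(I(G)^{[k]} : \ell y) = I(W(T_m \setminus \ell))^{[k-1]}$. Since $W(T_m \setminus \ell)$ is a Cohen-Macaulay forest of dimension $m-1$, the induction hypothesis gives $\depth(R_{W(T_m\setminus\ell)}/I(W(T_m\setminus\ell))^{[k-1]}) = (m-1)+(k-1)-1 = m+k-3$, and \Cref{lemdepthextravar} (two inert variables $\ell, y$) raises this to $m+k-1$. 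Because $\deg(\ell y) = 2 < 2k$, we have $\ell y \notin I(G)^{[k]}$, so \Cref{lemdepth}(iii) already yields the upper bound $\depth(R/I(G)^{[k]}) \le m+k-1$.

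For the matching lower bound I would apply \Cref{lemdepth}(ii) repeatedly along the chain $I(G)^{[k]} \subseteq I(G)^{[k]}+\langle \ell y\rangle \subseteq J := I(G)^{[k]}+\langle \ell y, \ell z\rangle$, reducing at each stage to an ideal of computable depth. The three computations involved are: (i) $((I(G)^{[k]}+\langle \ell y\rangle):\ell z) = (I(G)^{[k]}:\ell z)+\langle y\rangle = I(G_1)^{[k-1]}+\langle y\rangle$, where \Cref{lemcolonedge}(iii) together with a direct check of the edge set identifies $G_1 \cong W(T_m\setminus\ell)$ with $y$ corresponding to (the copy of) $z$, so that adjoining $\langle y\rangle$ deletes that vertex and leaves $\{y_z\}\sqcup W(T_m\setminus\{\ell,z\})$ plus inert variables, of depth $m+k-1$ by \Cref{depth sum} and the induction hypothesis; (ii) $(J:\ell) = I(G\setminus\{\ell,z,y\})^{[k]}+\langle z, y\rangle$ by \Cref{colon with variable}, with $G\setminus\{\ell,z,y\} = \{y_z\}\sqcup W(T_m\setminus\{\ell,z\})$, again of depth $m+k-1$; (iii) $J+\langle \ell\rangle = I(G\setminus\ell)^{[k]}+\langle \ell\rangle$ by \Cref{lemcolonedge}(i), with $G\setminus\ell = \{y\}\sqcup W(T_m\setminus\ell)$, of depth $m+k-1$. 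Feeding (ii) and (iii) into \Cref{lemdepth}(ii) gives $\depth(R/J) \ge m+k-1$; then (i) gives $\depth(R/(I(G)^{[k]}+\langle \ell y\rangle)) \ge m+k-1$; and a last application of \Cref{lemdepth}(ii), using that $(I(G)^{[k]}:\ell y)$ has depth $m+k-1$ from the previous paragraph, yields $\depth(R/I(G)^{[k]}) \ge m+k-1$. Combined with the upper bound, this closes the induction.

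The difficulty is organizational rather than conceptual. One must correctly identify the graph produced by each colon or deletion — in particular the isomorphism $G_1 \cong W(T_m\setminus\ell)$, and the fact that deleting a vertex together with (or in place of) its whisker creates isolated vertices $y_z$, which is exactly what makes the resulting edge ideal a direct sum amenable to \Cref{depth sum}. One must also keep the inert ambient variables straight via \Cref{lemdepthextravar}, and treat the boundary value $k = m-1$, where the square-free power in (ii) becomes the zero ideal and the relevant ideal degenerates to $\langle z, y\rangle$, still of depth $2m-2 = m+k-1$. Finally, \Cref{lemcolonedge}(iii) is invoked only for an edge one of whose endpoints has degree at least two, which holds here because $\ell$ is a leaf of $T_m$ (hence has degree two in $G$) and $z$ is its $T_m$-neighbour; the edgeless case is handled directly.
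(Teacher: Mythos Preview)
Your proposal is correct and follows essentially the same approach as the paper: both induct on $m$, dispose of $k=1$ and $k=\nu(G)$ directly, and for $2\le k\le m-1$ run the identical colon/deletion chain at a leaf $\ell$ of $T_m$ with whisker $y$ and $T_m$-neighbour $z$, namely $I(G)^{[k]}\subseteq I(G)^{[k]}+\langle \ell y\rangle\subseteq I(G)^{[k]}+\langle \ell y,\ell z\rangle$, identifying each colon and sum via \Cref{lemcolonedge} and \Cref{colon with variable} as a square-free power of a smaller Cohen-Macaulay forest. The only cosmetic difference is that the paper invokes \Cref{lemdepth}(i) to get equality of depth at every node, whereas you separate the upper bound (via \Cref{lemdepth}(iii) on $(I(G)^{[k]}:\ell y)$) from the lower bound (via repeated use of \Cref{lemdepth}(ii)); your explicit identification $G_1\cong W(T_m\setminus\ell)$ in step (i) is exactly what underlies the paper's one-line claim that $((I(G)^{[k]}+\langle \ell y\rangle):\ell z)=I(G\setminus\{\ell,y,z,y_z\})^{[k-1]}+\langle y\rangle$.
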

\begin{proof}
    Let $G=W(T_m)$, where $T_m$ is a forest on the vertex set $V(T_m)=\{x_1,\ldots,x_m\}$. More precisely, let $V(G)=V(T_m)\cup \{y_1,\ldots,y_m\}$, and $E(G)=E(T_m)\cup\{\{x_i,y_i\}\mid 1\le i\le m\}$. Then it is easy to see that $\nu(G)=m$. We prove the formula of $\di$ by induction on $m$. If $m=1$, then $G$ has only one edge $\{x_1,y_1\}$. In this case, $\nu(G)=1$ and $x_1-y_1$ is a regular element on $R/I(G)^{[1]}$. Therefore, $\depth(R/I(G)^{[k]})=1$ as $\mathrm{dim}(R/I(G)^{[1]})=1$. Now, let us assume $m\geq 2$. If $k=1$, then there is nothing to prove. Also, if $k=m$, then $I(G)^{[m]}$ has only one generator $\prod_{i=1}^mx_iy_i$. Note that $\di\ge 2m-1$ (by \cite[cf. Proposition 1.1]{EHHM12}), and since $\mathrm{dim}(G)=2m-1$, we have $\di=2m-1=m+m-1$. So, we may also assume $2\leq k\leq m-1$. We break the proof into two cases:\par 

    \noindent \textbf{Case-I:} Let $G$ consist of a disjoint union of edges, i.e., $\deg(x_i)=1$ for each $1\le i\le m$. By \Cref{lemcolonedge}, we have
\[
(I(G)^{[k]}:x_1y_1)=I(G\setminus\{x_1,y_1\})^{[k-1]}.
    \]
    Note that $k-1\le m-1=\nu(G\setminus\{x_1,y_1\})$. Thus, by the induction hypothesis and \Cref{lemdepthextravar}, 
    \begin{align}\label{eq3}
        \depth(R/(I(G)^{[k]}:x_1y_1))=(m-1)+(k-1)-1+2=m+k-1.
    \end{align}
    Now, consider the ideal $J=I(G)^{[k]}+ \l x_1y_1\r=I(G\setminus \{x_1,y_1\})^{[k]}+\l x_1y_1\r$. Then $J+\l x_1\r=I(G\setminus\{x_1,y_1\})^{[k]}+\l x_1\r$ and $(J:x_1)=I(G\setminus\{x_1,y_1\})^{[k]}+\l y_1\r$. Again note that $k\le m-1$ and thus using the induction hypothesis and \Cref{lemdepthextravar}, we get
    \begin{align}\label{eq4}
        \d(R/J+\l x_1\r)=(m-1)+k-1+1=m+k-1,
    \end{align}
    \begin{align}\label{eq5}
        \d(R/(J:x_1))=(m-1)+k-1+1=m+k-1.
    \end{align}
    Therefore, by Equations \eqref{eq4}, \eqref{eq5} and \Cref{lemdepthequal}, we have
    \begin{align}\label{eq6}
        \d(R/J)=\d(R/I(G)^{[k]}+\l x_1y_1\r)=m+k-1.
    \end{align}
    Finally, using Equations \eqref{eq3}, \eqref{eq6} and \Cref{lemdepthequal}, we obtain
    $$\d(R/I(G)^{[k]})=m+k-1.$$
    \noindent \textbf{Case-II:} Let $G$ contain some $x_i$ which has degree at least $2$. Then $T_m$ is a non-empty forest and hence contains a leaf, say $x_1\in V(T_m)$. Let $x_2$ be the unique neighbor of $x_1$ in $T_m$. Then $N_{G}(x_1)=\{x_2,y_1\}$. proceeding as in Case-I, in this case too, we get 
    \begin{align}\label{eq7}
        \depth(R/(I(G)^{[k]}:x_1y_1))=m+k-1.
    \end{align}
    Now we proceed to show that $\d(R/J')=m+k-1$, where $J'=I(G)^{[k]}+\l x_1y_1,x_1x_2\r$. Observe that $J'+\l x_1\r=I(G\setminus\{x_1,y_1\})^{[k]}+\l x_1\r$, and $(J':x_1)=I(G\setminus \{x_1,y_1,x_2,y_2\})^{[k]}+\l x_2,y_1\r$. Now if $k=m-1$, then $I(G\setminus \{x_1,y_1,x_2,y_2\})^{[k]}$ is the zero ideal and in that case $\d(R/(J':x_1))=2m-2$. Also, for $k=m-1$ we have, $I(G\setminus \{x_1,y_1\})^{[k]}=\prod_{i=2}^mx_iy_i$ and thus using \Cref{lemdepthextravar} we obtain $\d(R/J'+\l x_1\r)=2m-2$. Therefore, using \Cref{lemdepthequal} we get $\d(R/J')=2m-2=m+k-1$, where $k=m-1$. Now if $k\le m-2$, then $k\le \min\{\nu(G\setminus\{x_1,y_1\}),\nu(G\setminus\{x_1,y_1,x_2,y_2\})\}=m-2$. Hence, using the induction hypothesis and \Cref{lemdepthextravar}, we obtain
    \begin{align*}
        \d(R/J'+\l x_1\r)=\d(R/I(G\setminus\{x_1,y_1\})^{[k]}+\l x_1\r)=m+k-1,
    \end{align*}
    \begin{align*}
        \d(R/(J':x_1))&=\d(R/I(G\setminus \{x_1,y_1,x_2,y_2\})^{[k]}+\l x_2,y_1\r)\\
        &=(m-2)+k-1+2\\
        &=m+k-1.
    \end{align*}
    Due to the above two equations and \Cref{lemdepthequal}, for each $k\le m-1$, we have
    \begin{align}\label{eq8}
        \d(R/I(G)^{[k]}+\l x_1y_1,x_1x_2\r)=m+k-1.
    \end{align}
    Now, applying \Cref{lemcolonedge}, one can observe that 
    $$((I(G)^{[k]}+\l x_1y_1\r):x_1x_2)=I(G\setminus\{x_1,y_1,x_2,y_2\})^{[k-1]}+\l y_1\r.$$
    Thus, by the induction hypothesis and \Cref{lemdepthequal}, we have
    \begin{align}\label{eq9}
        \d(R/((I(G)^{[k]}+\l x_1y_1\r):x_1x_2))=(m-2)+(k-1)-1+3=m+k-1.
    \end{align}
    Now, using Equations \eqref{eq8}, \eqref{eq9} and \Cref{lemdepthequal}, we get
    \begin{align}\label{eq10}
        \d(R/I(G)^{[k]}+\l x_1y_1\r)=m+k-1.
    \end{align}
    Hence, from equations \eqref{eq7}, \eqref{eq10} and \Cref{lemdepthequal}, it follows that $\d(R/I(G)^{[k]})=m+k-1$.
\end{proof}

\begin{corollary}\label{sqfree powers cm}
    Let $G$ be a Cohen-Macaulay forest. Then $R/I(G)^{[k]}$ is Cohen-Macaulay for all $k$. 
\end{corollary}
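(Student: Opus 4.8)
The plan is to deduce Corollary~\ref{sqfree powers cm} directly from Theorem~\ref{thmcmtreedepth} together with the structural description of Cohen-Macaulay forests recalled just before the theorem. First I would reduce to the connected case: if $G$ is a Cohen-Macaulay forest, its connected components $G_1,\ldots,G_t$ are each Cohen-Macaulay trees (a disconnected graph is Cohen-Macaulay iff each component is), and by Villarreal's characterization \cite[cf. Theorem 2.4]{CM} each $G_i$ equals $W(T_{m_i})$ for some tree $T_{m_i}$ on $m_i$ vertices, so that $\dim(G_i)=2m_i-1$ and $\nu(G_i)=m_i$.

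Next I would handle the fixed power $I(G)^{[k]}$ for $1\le k$. Since $I(G)^{[k]}=\langle 0\rangle$ for $k>\nu(G)$ (and the zero ideal is trivially Cohen-Macaulay, or one simply restricts to $k\le\nu(G)$ as the paper does), assume $1\le k\le\nu(G)=\sum_i m_i$. The generators of $I(G)^{[k]}$ coming from $k$-matchings of $G$ distribute across components, so by the standard distributive formula for square-free (matching) powers of a disjoint union one has
\[
I(G)^{[k]}=\sum_{k_1+\cdots+k_t=k,\ 0\le k_i\le m_i}\ \prod_{i=1}^{t}I(G_i)^{[k_i]},
\]
and more usefully its depth is governed componentwise. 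Concretely, $\depth(R/I(G)^{[k]})$ can be computed from Lemma~\ref{depth sum}-type additivity once one knows the depth of each $R_i/I(G_i)^{[k_i]}$; here Theorem~\ref{thmcmtreedepth} gives $\depth(R_i/I(G_i)^{[k_i]})=m_i+k_i-1$ for each admissible $k_i$.

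The computation then is bookkeeping: for a $t$-fold disjoint union, $\depth(R/I(G)^{[k]})=\min$ over the decompositions $k=k_1+\cdots+k_t$ of $\sum_i(m_i+k_i-1)=\big(\sum_i m_i\big)+k-t$, which is independent of the decomposition, hence equals $\nu(G)+k-t$. Meanwhile the Krull dimension of $R/I(G)^{[k]}$ — since each component already contributes its full $2m_i-1$ and $\dim$ of a disjoint union adds, and the highest-dimensional associated prime survives into every square-free power (for $k\le\nu(G)$ the ideal $I(G)^{[k]}$ is contained in $I(G)^{[1]}=I(G)$, so $\dim R/I(G)^{[k]}\ge\dim R/I(G)$; the reverse needs a short argument that no minimal prime of $I(G)$ contains $I(G)^{[k]}$, which holds because each component contributes a matching of size $m_i$ avoiding any independent set) — works out to $\dim R/I(G)^{[k]}=\sum_i(2m_i-1)+\text{(nothing extra)}$; I would double-check this matches $\nu(G)+k-t$ only at... no — here I expect the cleanest route is to invoke Theorem~\ref{thmcmtreedepth} in the connected case to get $\depth=\dim$ there and then note that for a disjoint union of Cohen-Macaulay modules, Cohen-Macaulayness is preserved, so $R/I(G)^{[k]}$ is Cohen-Macaulay.

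The main obstacle, therefore, is the passage from the connected statement of Theorem~\ref{thmcmtreedepth} to the general Cohen-Macaulay forest, i.e.\ verifying that both $\depth$ and $\dim$ of $R/I(G)^{[k]}$ split as sums over the components compatibly with the square-free power operation; the depth side follows from Lemma~\ref{depth sum} once one proves that $R/I(G)^{[k]}\cong$ a tensor-product-type decomposition reduces to the component rings (this is where I would need the distributive description of $\G(I(G)^{[k]})$ over a disjoint union), while the dimension side requires only that $I(G)^{[k]}$ and $I(G)$ have the same height for $1\le k\le\nu(G)$, which in turn follows because $G$ being Cohen-Macaulay forces $\dim R/I(G)=\nu(G)+|V(G)|-2\nu(G)$ and the full matching of $G$ is available inside $I(G)^{[\nu(G)]}\subseteq I(G)^{[k]}$ — actually the slick finish is: Theorem~\ref{thmcmtreedepth} gives $\depth(R/I(G)^{[k]})=\nu(G)+k-1$ in the connected case and $\dim(R/I(G)^{[k]})=\nu(G)+k-1$ as well (since the connected $G=W(T_m)$ has $\dim=2m-1$, and $I(G)^{[k]}$ for the connected case has dimension $m+k-1$: $k-1$ of the whisker variables are forced out), so they agree. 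I will write the proof to simply quote Theorem~\ref{thmcmtreedepth}, the formula $\dim(R/I(G)^{[k]})=m+k-1$, and additivity over components.
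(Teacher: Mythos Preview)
Your proposal rests on two misreadings and one genuine gap. First, Theorem~\ref{thmcmtreedepth} already applies to an arbitrary Cohen-Macaulay \emph{forest}: the $T_m$ in its statement (and proof) is a forest, not a tree, so there is no need to reduce to connected components at all. Second, even if you wanted to, your component reduction does not work: for a disjoint union $G=G_1\sqcup\cdots\sqcup G_t$ the ideal $I(G)^{[k]}$ is the sum $\sum_{k_1+\cdots+k_t=k}\prod_i I(G_i)^{[k_i]}$ over \emph{all} admissible decompositions, which is not of the form $I_1R+I_2R$ with $I_1,I_2$ in disjoint variable sets; Lemma~\ref{depth sum} therefore does not apply, and your claimed formula ``$\depth(R/I(G)^{[k]})=\min$ over decompositions of $\sum_i(m_i+k_i-1)$'' has no basis. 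Likewise, $R/I(G)^{[k]}$ is not a tensor product of the $R_i/I(G_i)^{[k_i]}$, so ``Cohen-Macaulayness is preserved for a disjoint union of CM modules'' is not a statement you can invoke here.

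The missing idea is the dimension bound, which you only gesture at. The paper's argument is a short pigeonhole: with $G=W(T_m)$ and $|V(G)|=2m$, any monomial prime $\mathfrak p=\langle S\rangle$ with $|S|=m-k$ can touch at most $m-k$ of the index pairs $\{x_i,y_i\}$, leaving at least $k$ pairs entirely outside $S$; the product of those $k$ whisker edges is a generator of $I(G)^{[k]}$ not in $\mathfrak p$. Hence $\mathrm{height}(I(G)^{[k]})\ge m-k+1$, so $\dim(R/I(G)^{[k]})\le m+k-1$, and equality with $\depth$ follows from Theorem~\ref{thmcmtreedepth}. This is both simpler and correct; your component-by-component route cannot be completed as stated.
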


\begin{proof}
     Note that, if $k>\nu(G)$, then $I(G)^{[k]}$ is the zero ideal and thus $R/I(G)^{[k]}$ is Cohen-Macaulay. Therefore, we may assume that $1\le k\le \nu(G)$. As before, let $G=W(T_m)$ for some forest $T_m$, where $V(T_m)=\{x_1,\ldots,x_m\}$. Thus $V(G)=\{y_1,\ldots,y_m\}\cup V(T_m)$ and $E(G)=E(T_m)\cup\{\{x_i,y_i\}\mid 1\le i\le m\}$. Let us take a subset $S\subseteq V(G)$ such that $\vert S\vert =m-k$. Let $\mathfrak{p}$ be the prime ideal generated by $S$, i.e., $\mathfrak{p}=\l S\r $. Corresponding to $S$, let us consider the set of indices $\mathcal A_{S}$ as follows:
    \[
    \mathcal A_{S}:=\{1\le i\le m\mid x_i\in S \text{ or }y_i\in S\}.
    \]
    Since $\vert S\vert=m-k$, we have $|\mathcal A_S|\leq m-k$. Thus, there exist at least $k$ integers between $1$ to $m$, which do not belong to $\mathcal A_S$. Without loss of generality, let $1,\ldots,k\not\in \mathcal A_S$. Then 
    \[
    \{x_1,\ldots,x_k,y_1,\ldots,y_k\}\cap S=\emptyset.
    \]
    Now, $x_1\cdots x_ky_1\cdots y_k$ is a minimal generator of $I(G)^{[k]}$, which does not belong to $\mathfrak{p}$. Thus, $I(G)^{[k]}\not\subset\mathfrak{p}$. Since $S$ was chosen arbitrarily, we can say that there exists no prime ideal of height $\leq m-k$, which contains the ideal $I(G)^{[k]}$. Therefore, $\mathrm{height}(I(G)^{[k]})\geq m-k+1$, which implies $\mathrm{dim}(R/I(G)^{[k]})\leq 2m-(m-k+1)=m+k-1$. Hence, it follows from \Cref{thmcmtreedepth} that $R/I(G)^{[k]}$ is Cohen-Macaulay with dimension $m+k-1$. 
\end{proof}

\begin{remark}\label{cm forest all sqf powers}
    It is well-known that for a square-free monomial ideal $I\subseteq R$, the minimal primes of $I$ and $I^k$ are the same, and thus, $\dim(R/I)=\dim(R/I^k)$. However, it is very rare when some powers of an edge ideal become Cohen-Macaulay. In this context, note that, if $G$ is a Cohen-Macaulay forest of dimension $m$, then by \cite[Corollary 3.10]{HHT1234}, we have $\depth(R/I(G)^k)=\max\{m-k+1,r\}$, where $r$ is the number of connected components of $G$. Based on this, one can observe that if $I(G)$ is not a complete intersection ideal (in other words, $G$ is not a disjoint union of edges), then we have $\depth(R/I(G)^{k})<m=\dim(R/I(G)^k)$. In other words, for a Cohen-Macaulay tree $G$ which is not a disjoint union of edges, $R/I(G)^k$ is never Cohen-Macaulay for $k\geq 2$, whereas we have proved in \Cref{sqfree powers cm} that $R/I(G)^{[k]}$ is always Cohen-Macaulay, which is quite surprising.
\end{remark}

\section{Square-free powers of cycles}\label{sec cycle}

In this section, we obtain a formula for the regularity and bounds on the depth of square-free powers of edge ideals of cycles. Although the regularity of ordinary powers of edge ideals of cycles was determined in 2015 \cite{BHT}, an explicit formula for the depth of ordinary powers of cycles has been established very recently in \cite{MiTrVu2023}. Using our bounds on the depth of square-free powers of edge ideals of cycles, we obtain a numerical formula for the depth of the second square-free power. 
\medskip

Let us start by giving tight bounds on the regularity of $k^{th}$ square-free powers of cycles.
\begin{proposition}\label{cycle reg bound}
    Let $C_n$ denote the cycle of length $n$. Then for $1\le k\le \nu(C_n)=\lfloor\frac{n}{2}\rfloor$, we have
    \[
    2k+\left\lfloor \frac{n-2k-1}{3}\right\rfloor\leq \reg(I(C_n)^{[k]})\le 2k+\left\lfloor \frac{n-2k}{3} \right\rfloor.
    \]
\end{proposition}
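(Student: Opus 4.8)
The plan is to prove the two bounds separately, imitating the strategy used for \Cref{sqf path} but adapting it to the cyclic structure. For the lower bound, I would like to reduce to the path case via an induced-subgraph argument: deleting one vertex from $C_n$ yields $P_{n-1}$, and since regularity of square-free powers is non-decreasing under taking induced subgraphs (this follows from the fact that $I(G\setminus x)^{[k]}$ is obtained from $I(G)^{[k]}$ by setting a variable to zero, together with \Cref{lemreg}(i) applied to $I(G)^{[k]}+\langle x\rangle = I(C_n\setminus x)^{[k]}+\langle x\rangle$ from \Cref{lemcolonedge}(i)), we get
\[
\reg(I(C_n)^{[k]})\ge \reg(I(P_{n-1})^{[k]})=2k+\left\lfloor\frac{n-1-2k}{3}\right\rfloor=2k+\left\lfloor\frac{n-2k-1}{3}\right\rfloor,
\]
using \Cref{sqf path}. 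One must check $k\le\nu(P_{n-1})=\lfloor\frac{n-1}{2}\rfloor$, which holds except possibly when $n$ is even and $k=\frac n2$; in that boundary case $I(C_n)^{[k]}$ has a linear resolution (it is polymatroidal by \Cref{depth highest power}, or one checks directly), so $\reg=2k$ and the claimed lower bound $2k+\lfloor\frac{-1}{3}\rfloor=2k-1$ holds trivially. Alternatively, and perhaps more in the spirit of the paper, one can exhibit an explicit $k$-admissible-matching–type lower bound directly — but $C_n$ is not a forest, so \cite{CFL1} does not apply, and a direct Betti-number or $\reg\ge$ induced-matching argument for $I(C_n)^{[k]}$ (e.g.\ via a long induced $P_{n-1}$ inside $C_n$) is cleaner; I would go with the induced-subgraph route.

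For the upper bound I would induct on $n$ (for fixed $k$), handling small $n$ and the extremal $k=\nu(C_n)$ and $k=1$ cases via \Cref{cycle} and \Cref{depth highest power}/\cite[Theorem 5.1]{BHZN}. For the inductive step with $2\le k\le\nu(C_n)-1$ and $n$ large, pick an edge $\{x_{n-1},x_n\}$ of the cycle and run the same three-colon cascade as in the proof of \Cref{sqf path}: first bound $\reg(I(C_n)^{[k]}+\langle x_{n-2}x_{n-1}\rangle)$, then remove that extra generator using \Cref{lemreg}(iii) with the monomial $x_{n-2}x_{n-1}$. The colon $(I(C_n)^{[k]}:x_{n-2}x_{n-1})$ is computed by \Cref{lemcolonedge}(iii): deleting $x_{n-2},x_{n-1}$ from $C_n$ and adding the edge joining their outside neighbors $x_{n-3}$ and $x_n$ gives back a cycle $C_{n-2}$, so this colon is $I(C_{n-2})^{[k-1]}$, whose regularity by the induction hypothesis is $2(k-1)+\lfloor\frac{(n-2)-2(k-1)}{3}\rfloor=2k-2+\lfloor\frac{n-2k}{3}\rfloor$; adding the degree $2$ of $x_{n-2}x_{n-1}$ gives exactly $2k+\lfloor\frac{n-2k}{3}\rfloor$. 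The remaining pieces — $I(C_n)^{[k]}+\langle x_{n-2}x_{n-1}, x_{n-1}\rangle = I(P_{n-2})^{[k]}+\langle x_{n-1}\rangle$ (a path appears once the cycle is broken), and the colon $(I(C_n)^{[k]}+\langle x_{n-2}x_{n-1}\rangle:x_{n-1}x_n)$ which via \Cref{colon comma exchange} and \Cref{lemcolonedge} becomes $\langle x_{n-2}\rangle + I(P_{n-3})^{[k-1]}$ or similar — are all governed by \Cref{sqf path} and easy floor-function bookkeeping, each coming out $\le 2k+\lfloor\frac{n-2k}{3}\rfloor$.

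The main obstacle I anticipate is bookkeeping at the boundary: ensuring that each reduced graph still carries a large enough matching number for the relevant power ($k$ or $k-1$) to be nonzero, and that when it does become zero the zero-ideal convention $\reg(\langle 0\rangle)=1$ still yields a bound consistent with $2k+\lfloor\frac{n-2k}{3}\rfloor$. In particular the cases $k=\nu(C_n)$ and $k=\nu(C_n)-1$ (for even $n$) are delicate because various deletions push the power above the matching number of the smaller graph, and one must argue separately there — likely just quoting that $I(C_n)^{[\nu(C_n)]}$ is polymatroidal hence has a linear resolution, and checking $\reg=2\nu$ is compatible with the formula since $\lfloor\frac{n-2\nu}{3}\rfloor\in\{-1,0\}$ forces a small correction that I would absorb by treating these as base cases of the induction rather than inductive steps. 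A secondary, minor subtlety is verifying that removing an arbitrary matching edge of $C_n$ (rather than a "pendant"-like edge as in a path) genuinely returns a cycle under the \Cref{lemcolonedge}(iii) construction — this just needs checking that $N_{C_n}(x_{n-2})\setminus\{x_{n-1}\}=\{x_{n-3}\}$ and $N_{C_n}(x_{n-1})\setminus\{x_{n-2}\}=\{x_n\}$ are singletons with $x_{n-3}\neq x_n$ (true once $n\ge 5$), so $G_1$ in that lemma is exactly $C_{n-2}$ on $\{x_n,x_1,\dots,x_{n-3}\}$.
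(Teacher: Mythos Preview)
Your approach matches the paper's almost exactly: the lower bound via $I(C_n)^{[k]}+\langle x_n\rangle=I(P_{n-1})^{[k]}+\langle x_n\rangle$ and \Cref{lemreg}(i) is precisely what the paper does, and your upper-bound cascade (colon with an edge to return to $C_{n-2}^{[k-1]}$, and the intermediate pieces all reducing to paths handled by \Cref{sqf path}) is the same argument with different vertex labels.

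The one genuine slip is the induction scheme. You say you induct on $n$ for fixed $k$, but the key step $(I(C_n)^{[k]}:x_{n-2}x_{n-1})=I(C_{n-2})^{[k-1]}$ lands at power $k-1$, so an induction on $n$ alone gives you nothing there. The paper instead inducts on $k$: the base case $k=1$ is \Cref{cycle}, and for $k\ge 2$ every piece of the cascade is either a path (handled outright by \Cref{sqf path}, no induction needed) or $I(C_{n-2})^{[k-1]}$ (handled by the hypothesis on $k-1$). With that change your argument goes through. A minor correction: deleting $x_{n-1}$ from $C_n$ yields $P_{n-1}$, not $P_{n-2}$, but this only strengthens your bound.
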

\begin{proof}
    We first prove the upper bound by induction on $k$. If $k=1$, then the bound follows from \Cref{cycle}. Therefore, we may assume that $k\ge 2$. Let us consider the ideal $I(C_n)^{[k]}+\l x_1x_2,x_1x_n\r$. Since $N_{C_n}(x_1)=\{x_2,x_n\}$, using \Cref{colon with variable}, we get $(I(C_n)^{[k]}+\l x_1x_2,x_1x_n\r):x_1=I(C_n\setminus \{x_1,x_2,x_n\})^{[k]}+\l x_2,x_n\r$. Note that $C_n\setminus \{x_1,x_2,x_n\}\cong P_{n-3}$, and $I(C_n\setminus \{x_1,x_2,x_n\})^{[k]}=0$ if $k\ge \lfloor\frac{n}{2}\rfloor-1$. Therefore, by \Cref{sqf path}, $\reg((I(C_n)^{[k]}+\l x_1x_2,x_1x_n\r):x_1)\le 2k-1+\lfloor\frac{n-2k}{3} \rfloor$. Now, by \Cref{lemcolonedge}, $I(C_n)^{[k]}+\l x_1x_2,x_1x_n,x_1\r=I(C_n)^{[k]}+\l x_1\r=I(C_n\setminus x_1)^{[k]}+\l x_1\r$. Observe that $C_n\setminus x_1\cong P_{n-1}$, and $I(C_n\setminus x_1)^{[k]}=\l 0\r$ if $k=\lfloor\frac{n}{2}\rfloor$. Hence, $\reg(I(C_n)^{[k]}+\l x_1x_2,x_1x_n,x_1\r)\le 2k+\lfloor \frac{n-2k}{3}\rfloor$ (by \Cref{sqf path}). Therefore, using \Cref{lemreg}(iii), we obtain 
    \[
    \reg(I(C_n)^{[k]}+\l x_1x_2,x_1x_n\r)\le 2k+\left\lfloor \frac{n-2k}{3}\right\rfloor.
    \]
    Next, consider the ideal $((I(C_n)^{[k]}+\l x_1x_2\r):x_1x_n)=\l x_2\r+(I(C_n)^{[k]}:x_1x_n)$. Hence, by \Cref{sqf colon}, we have    $(I(C_n)^{[k]}+\l x_1x_2\r):x_1x_n=\l x_2\r+I(C_n\setminus\{x_1,x_2,x_n\})^{[k-1]}$. Again note that $I(C_n\setminus\{x_1,x_2,x_n\})^{[k-1]}=I(P_{n-3})^{[k-1]}=\l 0\r$ if $k=\lfloor\frac{n}{2}\rfloor$. Hence, using \Cref{sqf path} again, we get $\reg((I(C_n)^{[k]}+\l x_1x_2\r):x_1x_n)\le 2k-2+\lfloor \frac{n-2k}{3}\rfloor$. Consequently, by \Cref{lemreg}, 
    \[
    \reg(I(C_n)^{[k]}+\l x_1x_2\r)\le 2k+\left\lfloor \frac{n-2k}{3}\right\rfloor.
    \]
    Note that, by \Cref{sqf colon}, we have $(I(C_n)^{[k]}:x_1x_2)=I(C_{n-2})^{[k-1]}$, where the cycle $C_{n-2}$ is obtained from $C_n$ by removing $x_1,x_2$ and adding the edge $\{x_3,x_n\}$. Thus, by the induction hypothesis, $(\reg(I(C_n)^{[k]}:x_1x_2)\le 2k-2+\lfloor \frac{n-2k}{3}\rfloor$ and finally using \Cref{lemreg}, we get $\reg(I(C_n)^{[k]})\le 2k+\lfloor \frac{n-2k}{3}\rfloor$, as desired. \par 

 To establish the lower bound, we consider the ideal $I(C_n)^{[k]}+\l x_n\r$. Note that by \Cref{sqf colon},  $I(C_n)^{[k]}+\l x_n\r=I(C_{n}\setminus \{x_n\})^{[k]}+\l x_n\r$ and $C_{n}\setminus\{x_n\}\cong P_{n-1}$. Thus, using \Cref{sqf path}, we get $\reg(I(C_n)^{[k]}+\l x_n\r)=2k+\lfloor\frac{n-2k-1}{3}\rfloor$. Hence, by \Cref{lemreg}, $2k+\lfloor \frac{n-2k-1}{3}\rfloor\leq \reg(I(C_n)^{[k]})$.
\end{proof}

Next, our aim is to find an exact formula for the regularity of $I(C_n)^{[k]}$. To establish the formula, we need the regularity bounds from the above proposition as well as some tools from the theory of simplicial complexes. Specifically, we identify square-free powers of edge ideals with facet ideals of certain simplicial complexes. Let us briefly recall some necessary prerequisites. We refer the readers to \cite{Faridi, RHV} for quick references.

A {\it simplicial complex} $\Delta$ on a vertex set $V(\Delta)$ is a non-empty collection of subsets of $V(\Delta)$ such that $\{x_i\}\in\Delta$ for each $x_i\in V(\Delta)$, and if $F'\in\Delta$ and $F\subseteq F'$, then $F\in \Delta$. Each element in $\Delta$ is called a {\it face} of $\Delta$. A maximal face with respect to inclusion is called a {\it facet} of $\Delta$. If $\{F_1,\ldots,F_t\}$ is the collection of all facets of $\Delta$, then we write $\mathcal F(\Delta)=\{F_1,\ldots,F_t\}$, and $\Delta=\l F\mid F\in \mathcal F(\Delta)\r$. If $F$ is a face of $\Delta$, then the {\it dimension of $F$} is the number $|F|-1$, and is denoted by $\mathrm{dim}(F)$. The dimension of the simplicial complex $\Delta$ is defined to be $\mathrm{dim}(\Delta)=\max_{F\in\Delta}\mathrm{dim}(F)$. For any $U\subseteq V(\Delta)$, an {\it induced subcomplex} of $\Delta$ on $U$ is the simplicial complex on the vertex set $U$ and with the set of facets $\{F\in\mathcal{F}(\Delta)\mid F\subseteq U\}$. Moreover, if $\Delta=\l F_1,\ldots,F_t\r$, then $\Delta^c_U$ denote the simplicial complex with the set of facets $\{U\setminus F_1,\ldots,U\setminus F_t\}$.

For a simplicial complex $\Delta$ an {\it orientation} on $\Delta$ is a total order $<$ on $V(\Delta)$. In this case $(\Delta,<)$ is called an {\it oriented simplicial complex}. Let $(\Delta,<)$ be a $t$-dimensional oriented simplicial complex. A face $F=\{w_1,\ldots,w_r\}$ with $w_1<\dots<w_r$ is said to be an {\it oriented face}. In this case, we simply write $F=[w_1,\ldots,w_r]$. Let $\C_i(\Delta)$ denote the $\K$-vector space with basis consisting of all the oriented $i$-dimensional faces of $\Delta$. Then the {\it augmented (oriented) chain complex} of $\Delta$ is the complex
    \[
0\rightarrow\C_t(\Delta)\xrightarrow{\partial_t}\C_{t-1}(\Delta)\xrightarrow{\partial_{t-1}}\cdots\rightarrow\C_1(\Delta)\xrightarrow{\partial_1}\C_0(\Delta)\xrightarrow{\partial_0}\K\rightarrow 0,
    \]
    where $\partial_0([w])=1$ for all $w\in V(\Delta)$, and for each $r\in\{1,\ldots,t\}$, we have the {\it boundary maps} $\partial_r([w_1,\ldots,w_{r+1}])=\sum_{j=1}^{r+1}(-1)^{j+1}[w_1,\ldots,\widehat{w_j},\ldots,w_{r+1}]$, where $w_j$ is not present in the oriented face $[w_1,\ldots,\widehat{w_j},\ldots,w_{r+1}]$.

    Let $I$ be a square-free monomial ideal in $R=\K[x_1,\ldots,x_n]$. Then the simplicial complex $\Delta(I)=\l F\subseteq \{x_1,\ldots,x_n\}\mid \x_F\in\G(I)\r$ is called the {\it facet complex} of $I$, and $I$ is said to be the {\it facet ideal} of $\Delta(I)$. In this case the graded Betti numbers of $I$ are determined by the dimension of the reduced homology groups of the simplicial complex $\Delta(I)$. Recall that, $\widetilde{H}_i(\Delta(I);\K)$ denote the $i^{th}$ reduced simplicial homology group of $\Delta(I)$ over the field $\K$. Then, from \cite[Theorem 2.8]{AlFa} we have
    \begin{align}\label{facet homology formula}
    \beta_{p,q}(I)=\sum_{\substack{\Gamma\subseteq \Delta(I)\\ 
    |V(\Gamma)|=q}}\mathrm{dim}_{\K}\widetilde{H}_{p-1}\left(\Gamma^c_{V(\Gamma)};\K\right),
    \end{align}
    where $\Gamma$ is any induced subcomplex of $\Delta(I)$ such that $|V(\Gamma)|=q$. Using this description of Betti numbers we are able to prove the following theorem.

\begin{theorem}\label{cycle reg theorem}
    Let $C_n$ denote the cycle of length $n$. Then for $1\le k\le \nu(C_n)=\lfloor\frac{n}{2}\rfloor$, we have
    \[
    \reg(I(C_n)^{[k]})=2k+\left\lfloor\frac{n-2k}{3}\right\rfloor.
    \]
\end{theorem}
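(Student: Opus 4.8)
The plan is to first reduce to the single genuinely hard case and then attack it via the facet--homology formula. The upper bound $\reg(I(C_n)^{[k]})\le 2k+\lfloor\frac{n-2k}{3}\rfloor$ is already contained in \Cref{cycle reg bound}, so only the matching lower bound has to be established. The point is that $\lfloor\frac{n-2k-1}{3}\rfloor=\lfloor\frac{n-2k}{3}\rfloor$ unless $3\mid(n-2k)$, so \Cref{cycle reg bound} already gives the asserted equality whenever $3\nmid(n-2k)$, and we may assume $n=2k+3t$ for some integer $t\ge 0$. If $t=0$, then $k=\nu(C_n)$ and $I(C_n)^{[k]}$ is polymatroidal (cf.\ \Cref{depth highest power}), hence has a linear resolution, so $\reg(I(C_n)^{[k]})=2k$. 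It therefore remains to prove, for $n=2k+3t$ with $t\ge 1$, that $\reg(I(C_n)^{[k]})\ge 2k+t$.

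For this I would use the facet--homology formula \eqref{facet homology formula}: it suffices to exhibit an induced subcomplex $\Gamma$ of $\Delta:=\Delta(I(C_n)^{[k]})$ together with an index $p$ such that $\widetilde H_{p-1}(\Gamma^c_{V(\Gamma)};\K)\ne 0$ and $|V(\Gamma)|-p=2k+t$. The first candidate to try is the full complex $\Gamma=\Delta$, so that $|V(\Gamma)|=n=2k+3t$ and the requirement becomes $\widetilde H_{2t-1}(\Delta^c_V;\K)\ne 0$. Here $\Delta^c_V$ is the $(3t-1)$-dimensional complex whose facets are the complements $V(C_n)\setminus V(M)$ of vertex sets of $k$-matchings $M$ of $C_n$; equivalently, a subset $T\subseteq V(C_n)$ is a face of $\Delta^c_V$ if and only if $\nu(C_n\setminus T)\ge k$, i.e.\ one can delete further vertices so that $C_n$ falls into arcs all of even length. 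Everything then reduces to computing the reduced homology of this explicit complex in the middle degree $2t-1$. When $t=1$ the complex $\Delta^c_V$ is only $2$-dimensional and has an entirely explicit $f$-vector: every subset of $V(C_n)$ of size at most $2$ is a face (so the $1$-skeleton is complete), while the $3$-faces fall into the three types ``three consecutive vertices'', ``two consecutive vertices and one isolated vertex'', and ``three isolated vertices with all three gaps even'', each with a closed-form count; from the resulting $f$-vector one extracts $\widetilde H_1(\Delta^c_V)\ne 0$, via the reduced Euler characteristic when that already forces it and otherwise by producing enough independent non-bounding $1$-cycles. For general $t$ I would proceed by induction on $t$ (equivalently on $n$): deleting a vertex $x_i$ of $C_n$, the induced subcomplex of $\Delta$ on $V(C_n)\setminus\{x_i\}$ equals $\Delta(I(P_{n-1})^{[k]})$ because $C_n\setminus x_i\cong P_{n-1}$, and $\reg(I(P_{n-1})^{[k]})=2k+(t-1)$ by \Cref{sqf path}; a Mayer--Vietoris (or link-and-deletion) comparison of $\Delta^c_V$ with the complement complexes associated to $x_i$ should then lift a nonzero reduced homology class up by one homological degree for every three vertices reinstated.

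The step I expect to be the genuine obstacle is precisely this homology computation for $\Delta^c_V$: already for $t=1$ the reduced Euler characteristic alone is inconclusive once $k\ge 3$ --- for example, $n=9$, $k=3$ gives $\widetilde\chi(\Delta^c_V)=2$, which is a priori compatible with $\widetilde H_1(\Delta^c_V)=0$ --- so one must really control $\widetilde H_{2t-1}(\Delta^c_V)$, ideally by pinning down its homotopy type or by making the inductive comparison fully rigorous. An alternative, more algebraic route that would sidestep the simplicial computation is to verify that
\[
I(C_n)^{[k]}=I(P_n)^{[k]}+x_1x_2\,I(P_{n-2})^{[k-1]}
\]
is a Betti splitting, where $P_n$ denotes $C_n$ with the edge $\{x_1,x_2\}$ removed (a Hamiltonian path) and $P_{n-2}=C_n\setminus\{x_1,x_2\}$: by \Cref{sqf path} both summands have regularity $2k+\lfloor\frac{n-2k}{3}\rfloor$, so a Betti splitting would at once give $\reg(I(C_n)^{[k]})\ge\reg(I(P_n)^{[k]})=2k+\lfloor\frac{n-2k}{3}\rfloor$. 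The difficulty there is checking the Betti-splitting hypothesis, which amounts to controlling the minimal free resolution --- in particular the regularity --- of the intersection ideal $I(P_n)^{[k]}\cap x_1x_2\,I(P_{n-2})^{[k-1]}$.
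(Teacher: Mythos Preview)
Your reduction is exactly the paper's: the upper bound from \Cref{cycle reg bound}, the observation that $\lfloor\frac{n-2k-1}{3}\rfloor=\lfloor\frac{n-2k}{3}\rfloor$ unless $3\mid(n-2k)$, the treatment of $t=0$ via linearity, and the identification of the target $\widetilde H_{2t-1}(\Delta^c_V;\K)\ne 0$ via \eqref{facet homology formula} all match. The gap is precisely at the point you flag as ``the genuine obstacle'': you do not actually produce a nonzero class in $\widetilde H_{2t-1}(\Delta^c_V;\K)$, and none of the three routes you sketch gets there.

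The paper does not use Euler characteristics, Mayer--Vietoris, or Betti splittings. Instead it writes down an explicit $(2t-1)$-cycle. One fixes the set $X=\{2k+3l,\,2k+3l+1\mid 0\le l\le t-1\}\cup\{n\}$ of size $2t+1$ and takes $[\alpha]=\partial([X])$, the formal boundary of the oriented simplex on $X$. Each codimension-one face $X\setminus\{w_s\}$ is checked to lie in $\Delta^c_V$ (its complement contains a $k$-matching), so $[\alpha]\in\mathcal C_{2t-1}(\Delta^c_V)$ and $\partial_{2t-1}([\alpha])=0$ is automatic. Since $X$ itself is \emph{not} a face of $\Delta^c_V$ (its complement has no $k$-matching), $[\alpha]$ is not obviously a boundary; the substance of the argument is a careful, explicit sequence of elementary row operations on the matrix of $\partial_{2t}$ showing that the augmented system $A\mathbf x=\mathbf b_{[\alpha]}$ is inconsistent, hence $[\alpha]\notin\mathrm{Im}(\partial_{2t})$. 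This linear-algebra step---organised as $t$ rounds of row additions indexed by subsets of the ``even'' positions in $X$---is the technical core and is entirely absent from your proposal. Your Euler-characteristic remark already shows why something of this sort is unavoidable; the Mayer--Vietoris induction is only a hope (note also that passing from $C_n$ to $P_{n-1}$ changes the target homological degree by $2$, not by $1$, so ``one degree per three vertices'' does not line up); and for the Betti-splitting route, the generating sets $\mathcal G(I(P_n)^{[k]})$ and $\mathcal G(x_1x_2\,I(P_{n-2})^{[k-1]})$ are not disjoint in general (a $k$-matching of the Hamiltonian path using both $\{x_n,x_1\}$ and $\{x_2,x_3\}$ produces a monomial divisible by $x_1x_2$), so even the splitting hypothesis needs repair before one can ask about the intersection ideal.
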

\begin{proof}
     Observe that if $n-2k$ is not divisible by $3$, then by \Cref{cycle reg bound}, $\reg(I(C_n)^{[k]})=2k+\left\lfloor\frac{n-2k}{3}\right\rfloor$. Moreover, if $n=2k$, then $I(C_n)^{[k]}$ has a linear resolution by \cite[Theorem 5.1]{BHZN}, and thus, we have the required formula. Therefore, it is enough to consider the case when $n-2k=3j$ for some $j\ge 1$. Now let $\Delta(I(C_n)^{[k]})$ denote the facet complex corresponding to the ideal $I(C_n)^{[k]}$. Our aim now is to show that $\beta_{2j,n}(I(C_n)^{[k]})\neq 0$, where $n=2k+3j$ for some $j\ge 1$. Using \Cref{facet homology formula} we have 
    \begin{align*}
    \beta_{2j,n}(I(C_n)^{[k]})=\mathrm{dim}_{\K}\widetilde{H}_{2j-1}\left(\Delta(I(C_n)^{[k]})^c;\K\right).
    \end{align*}
     Thus it is enough to show that $\widetilde{H}_{2j-1}\left(\Delta(I(C_n)^{[k]})^c;\K\right)\neq 0$. Let $X= \{2k+3l,2k+3l+1\mid 0\le l\le j-1\}\cup\{n\}\subseteq\{1,\ldots,n\}$, where $|X|=2j+1$. Consider the element
    \[
    [\alpha]=\sum_{s=1}^{2j+1}(-1)^{s+1}[w_1,\ldots,\widehat{w_s},\ldots,w_{2j+1}],
    \]
    where $w_r\in X$ for all $r=1,\ldots,2j+1$, and $w_r<w_{r+1}$ for all $r=1,\ldots,2j$. We proceed to show that $[\alpha]\in \mathrm{ker}(\partial_{2j-1})$, but $[\alpha]\notin \mathrm{Im}(\partial_{2j})$, and consequently, $\beta_{2j,n}(I(C_n)^{[k]})\neq 0$. Our arguments closely follow the proof of \cite[Theorem 4.7]{KaNaQu}.

    For $1\le s\le 2j+1$, let $[\lambda_s]$ denote the oriented face $[w_1,\ldots,\widehat{w_s},\ldots,w_{2j+1}]$ of $\Delta(I(C_n)^{[k]})^c$. Then $[\alpha]=\sum_{s=1}^{2j+1}(-1)^{s+1}[\lambda_s]$. Our first claim is the following.

    \noindent
    \textbf{Claim 1}: $[\alpha]\in\mathrm{ker}(\partial_{2j-1})$.

    \noindent
    \textit{Proof of Claim 1}: First we need to prove that $[\alpha]\in \C_{2j-1}(\Delta(I(C_n)^{[k]})^c)$. For this, it is enough to show that $[\lambda_s]\in \C_{2j-1}(\Delta(I(C_n)^{[k]})^c)$ for each $s\in\{1,\ldots,2j+1\}$. If $s=1$, then $[\lambda_1]\in \C_{2j-1}(\Delta(I(C_n)^{[k]})^c)$ since $\{1,\ldots,2k\}$ is a facet of $\Delta(I(C_n)^{[k]})$. Similarly, if $s=2j+1$, then $[\lambda_{2j+1}]\in \C_{2j-1}(\Delta(I(C_n)^{[k]})^c)$ since $\{1,\ldots,2k-2\}\cup\{n,n-1\}$ is a facet of $\Delta(I(C_n)^{[k]})$. Now if $1<s<2j+1$, then $[\lambda_{s}]\in \C_{2j-1}(\Delta(I(C_n)^{[k]})^c)$ since $\{1,\ldots,2k-2\}\cup\{2k+3l-1,2k+3l\}$ is a facet of $\Delta(I(C_n)^{[k]})$ in case $w_s=2k+3l$, and $\{1,\ldots,2k-2\}\cup\{2k+3l+1,2k+3l+2\}$ is a facet of $\Delta(I(C_n)^{[k]})$ in case $w_s=2k+3l+1$, where $0\le l\le j-1$. Next, observe that
    \begin{align*}
        \partial_{2j-1}([\alpha])
        &=\sum_{s=1}^{2j+1}(-1)^{s+1}\partial_{2j-1}([w_1,\ldots,\widehat{w_s},\ldots,w_{2j+1}])\\
        &=\sum_{s=1}^{2j+1}(-1)^{s+1}\left(\sum_{r=s+1}^{2j+1}(-1)^{r}[w_1,\ldots,\widehat{w_s},\ldots,\widehat{w_r}\ldots,w_{2j+1}]+\right.\\
&\quad\quad\quad\quad\quad\quad\quad\quad\quad\quad\quad\quad\quad\quad\quad\left. \sum_{r=1}^{s-1}(-1)^{r+1}[w_1,\ldots,\widehat{w_r},\ldots,\widehat{w_s}\ldots,w_{2j+1}]\right)\\
        &=0.
    \end{align*}
    This completes the proof of Claim 1.

    To prove $[\alpha]\notin \mathrm{Im}(\partial_{2j})$, first notice that $[X]=[w_1,\ldots,w_{2j+1}]\notin \C_{2j}(\Delta(I(C_n)^{[k]})^c)$ since $\{1,\ldots,n\}\setminus X$ does not contain any $k$-matching of $C_n$. Now let $\B_{2j}$ (respectively, $\B_{2j-1}$) denote the basis of $\C_{2j}(\Delta(I(C_n)^{[k]})^c)$ (respectively, $\C_{2j-1}(\Delta(I(C_n)^{[k]})^c)$) consisting of all the $2j$-dimensional (respectively, $(2j-1)$-dimensional) oriented faces of $\Delta(I(C_n)^{[k]})^c$. Let $A$ be the matrix corresponding to the linear transformation $\partial_{2j}$, and $\b_{[\alpha]}$ denote the coordinate vector (written as a column vector) corresponding to the element $[\alpha]\in\C_{2j-1}(\Delta(I(C_n)^{[k]})^c)$. Then it is enough to show that the system of linear equations $A\x=\b_{[\alpha]}$ has no solution in $\K^n$. For this, it is enough to show that $\mathrm{rank}(A)$ is strictly less than the rank of the augmented matrix $(A\mid \b_{[\alpha]})$. We show this by performing some elementary row operations on $(A\mid\b_{[\alpha]})$ described below.

    Recall that the rows and columns of the matrix $A$ are indexed by the basis elements $\B_{2j-1}$ and $\B_{2j}$, respectively. Let $R$ denote the row in $(A\mid\b_{[\alpha]})$ corresponding to $[\lambda_1]\in \B_{2j-1}$. Next, for $1\le t\le j$, and for all distinct even numbers $i_1,\ldots,i_t\in\{2,\ldots,2j\}$, let $R(i_1,\ldots,i_t)$ denote the row corresponding to the basis element $[(\lambda_1\setminus\{w_{i_1},\ldots,w_{i_t}\})\cup\{w_{i_1}+1,\ldots,w_{i_t}+1\}]\in\B_{2j-1}$. Moreover, given such a $t$ and for $1\le s\le t$, let $C(i_1,\ldots,\overline{i_s},\ldots,i_t)$ denote the column corresponding to the basis element $[(\lambda_1\setminus\{w_{i_1},\ldots,w_{i_t}\})\cup\{w_{i_s},w_{i_1}+1,\ldots,w_{i_t}+1\}]\in\B_{2j}$. For the row $R$ in $(A\mid\b_{[\alpha]})$ one can observe the following.
    \begin{enumerate}
        \item[(i)] The entry corresponding to the column $C(\overline{i})$ is $-1$, where $i\in\{2,\ldots, 2j\}$ is an even number.

        \item[(ii)] The entry corresponding to the column $\b_{[\alpha]}$ is $1$.

        \item[(iii)] The entries corresponding to all other columns are zero. This happens because if $w\in \{1,\ldots,n\}\setminus (X\setminus\{2k\})$, then $\{1,\ldots,n\}\setminus(\lambda_1\cup\{w\})$ does not contain any $k$-matching of the cycle $C_n$.
    \end{enumerate}
 Based on these observations, we perform the elementary row operations in $j$-many steps on $R$ to make all the entries in $R$ corresponding to the columns indexed by all the elements of $\B_{2j}$ to be zero, whereas the entry corresponding to the column $\b_{[\alpha]}$ in $R$ to be $1$.

\noindent
\textbf{Step-$\mathbf{1}$}: Perform the row operation $R+\sum_{\substack{i\in\{2,\ldots,2j\}\\
i\text{ even }}} R(i)$ on the row $R$, and let $R_1$ denote the row $R$ after this operation.

\noindent
For $2\le t\le j$, go through the following steps one by one.

\noindent
\textbf{Step-$\mathbf{t}$}: Perform the operation $R_{t-1}+\sum_{\substack{i_1,i_2,\ldots,i_t\in\{2,\ldots,2j\}\\
i_1,i_2,\ldots,i_t\text{ distinct even numbers}}}R(i_1,\ldots,i_t)$ on the row $R_{t-1}$, and let $R_t$ denote the row $R_{t-1}$ after this operation.

\noindent
\textbf{Claim 2}: In $R_1$ the entry corresponding to the column $\b_{[\alpha]}$ is $1$. Moreover, in $R_1$ the entry corresponding to the column $C(i,\overline{i'})$ is $-1$, where $i,i'\in\{2,\ldots,2j\}$ are distinct even numbers; entries corresponding to the all other columns are zero.

\noindent
\textit{Proof of Claim 2}: Observe that the entry in $R(i)$ corresponding to the column $\b_{[\alpha]}$ is $0$ since $w_{i}+1$ appears in the basis element corresponding to the row $R(i)$. Hence, in $R_1$, the entry corresponding to the column $\b_{[\alpha]}$ is $1$. Note that in $R(i)$, the only possible non-zero entries are the entries $1$ and $-1$ corresponding to the columns $C(\overline{i})$ and $C(i,\overline{i'})$, respectively, where $i,i'\in\{2,\ldots,2j\}$ are distinct even numbers. Now for any even $i_1\in\{2,\ldots,2j\}$ with $i_1\neq i$, the entry corresponding to the column $C(\overline{i})$ in $R(i_1)$ is zero since $w_{i_1}+1$ appears in the basis element corresponding to $R(i_1)$, whereas $w_{i_1}+1$ does not appear in the basis element corresponding to the column $C(\overline{i})$. Thus, the entry in $R_1$ corresponding to the column $C(\overline{i})$ is $0$. Again, observe that the entry in $R(i_1)$ corresponding to the column $C(i,\overline{i'})$ is $0$ since $w_i$ appears in the basis element corresponding to the row $R(i_1)$, whereas $w_i$ does not appear in the basis element corresponding to the column $C(i,\overline{i'})$. Thus, the entry corresponding to the column $C(i,\overline{i'})$ in $R_1$ is $-1$. Since $R_1=R+\sum_{\substack{i\in\{2,\ldots,2j\}\\
i\text{ even }}} R(i)$, we see that all other entries in $R_1$ are zero.

\noindent
\textbf{Claim 3}: For each $2\le t\le j-1$, the entry corresponding to the column $\b_{[\alpha]}$ in $R_t$ is $1$. Moreover, in $R_t$, the entry corresponding to the column $C(i_1,i_2,\ldots,i_t,\overline{i})$ is $-1$, where $i_1,\ldots,i_t,i\in\{2,\ldots,2j\}$ are distinct even numbers; entries corresponding to all other columns in $R_t$ are zero.

\noindent
\textit{Proof of Claim 3}: From Step $t-1$ we have that the entry in $R_{t-1}$ corresponding to the column $\b_{[\alpha]}$ is $1$. Now for distinct even numbers $i_1,\ldots,i_t\in\{2,\ldots,2j\}$, the entry in $R(i_1,\ldots,i_t)$ corresponding to the column $\b_{[\alpha]}$ is $0$ since $w_{i_1}+1$ appears in the basis element corresponding to the row $R(i_1,\ldots,i_t)$. Consequently, the entry corresponding to the column $\b_{[\alpha]}$ in $R_t$ is $1$. Next, we see from Step $t-1$ that the only possible non-zero entries in $R_{t-1}$ are the entries $-1$ corresponding to the columns $C(i_1,\ldots,i_{t-1},\overline{i})$, where $i_1,\ldots,i_{t-1},i\in\{2,\ldots,2j\}$ are distinct even numbers. Note that in $R(i_1,\ldots,i_t)$, the only possible non-zero entries are $1$ and $-1$ corresponding to the columns $C(i_1,\ldots,\overline{i_s},\ldots,i_t)$ and $C(i_1,\ldots,i_t,\overline{i})$, respectively, where $i,i_1,\ldots,i_s,\ldots,i_t\in\{2,\ldots,2j\}$ are distinct even numbers. We proceed to show that if $i_1',\ldots,i_t'\in\{2,\ldots,2j\}$ are distinct even numbers such that $\{i_1,\ldots,i_t\}\neq\{i_1',\ldots,i_t'\}$, then the entry in $R(i_1',\ldots,i_t')$ corresponding to the column $C(i_1,\ldots,\overline{i_s},\ldots,i_t)$ is $0$. Indeed, $i_m'\notin \{i_1,\ldots,i_t\}$ for some $m\in\{1,\ldots,t\}$ so that $w_{i_m'}+1$ appears in the basis element corresponding to the row $R(i_1'\ldots,i_t')$, whereas $w_{i_m'}+1$ does not appear in the basis element corresponding to the column $C(i_1,\ldots,\overline{i_s},\ldots,i_t)$. Consequently, in $R_t$, the entry corresponding to the column $C(i_1,\ldots,\overline{i_s},\ldots,i_t)$ is $0$. Next, observe that the entry in $R(i_1',\ldots,i_t')$ corresponding to the column $C(i_1,\ldots,i_t,\overline{i})$ is $0$ since $i_l\notin \{i_1',\ldots,i_t'\}$ for some $l\in \{1,\ldots,t\}$, which in turn, implies that $w_{i_l}$ appears in the basis element corresponding to the row $ R(i_1',\ldots,i_t')$, whereas $w_{i_l}$ does not appear in the basis element corresponding to the column $C(i_1,\ldots,i_t,\overline{i})$. Thus, the entry in $R_t$ corresponding to the column $C(i_1,\ldots,i_t,\overline{i})$ is $-1$. Since $R_t=R_{t-1}+\sum_{\substack{i_1,i_2,\ldots,i_t\in\{2,\ldots,2j\}\\
i_1,i_2,\ldots,i_t\text{ distinct even numbers}}}R(i_1,\ldots,i_t)$, all other entries in $R_t$ are zero.

\noindent
\textbf{Claim 4}: The entry corresponding to the column $\b_{[\alpha]}$ in $R_j$ is $1$. All other entries in $R_j$ are zero.

\noindent
\textit{Proof of Claim 4}: From Step $j-1$ we see that the entry in $R_{j-1}$ corresponding to the column $\b_{[\alpha]}$ is $1$. Now, observe that $R_j=R_{j-1}+R(2,4,\ldots,2j)$, and since $w_2+1=2k+2$ appears in the basis element corresponding to $R(2,4,\ldots,2j)$, we see that the entry in $R_j$ corresponding to the column $\b_{[\alpha]}$ is $1$. Again, from Step $j-1$ we observe that the only possible non-zero entries in $R_{j-1}$ are the entries $-1$ corresponding to the columns $C(i_1,\ldots,i_{j-1},\overline{i})$, where $i_1,\ldots,i_{j-1},i\in\{2,\ldots,2j\}$ are distinct even numbers. In this case, it is easy to see that $\{2,4,\ldots,2j\}=\{i_1,\ldots,i_{j-1},i\}$. Note that the only possible non-zero entries in $R(2,4,\ldots,2j)$ are $1$ corresponding to the columns $C(i_1,\ldots,i_{j-1},\overline{i})$, where $\{i_1,\ldots,i_{j-1},i\}=\{2,4,\ldots,2j\}$. Thus, we can conclude that entries corresponding to all the columns except $\b_{[\alpha]}$ in $R_j$ are zero.

From Claim 4, we can deduce that $\mathrm{rank}(A)<\mathrm{rank}(A\mid \b_{[\alpha]})$. Consequently, $\beta_{2j,n}(I(C_n)^{[k]})\neq 0$, and thus, $\reg(I(C_n)^{[k]})=2k+\lfloor\frac{n-2k}{3}\rfloor$, as desired.
\end{proof}

We illustrate the steps involved in \Cref{cycle reg theorem} in the following example.
\begin{example}
    Let us consider the ideal $I(C_{13})^{[2]}$ and proceed as in \Cref{cycle reg theorem} to show that $\beta_{6,13}(I(C_{13})^{[2]})\neq 0$. In this case $X=\{4,5,7,8,10,11,13\}$. Then
    \begin{align*}
    [\alpha]&=[5,7,8,10,11,13]-[4,7,8,10,11,13]+[4,5,8,10,11,13]-[4,5,7,10,11,13]+\\
    &\quad\quad\quad\quad\quad\quad\quad\quad\quad\quad\quad\quad\quad\quad\quad\quad[4,5,7,8,11,13]-[4,5,7,8,10,13]+[4,5,7,8,10,11].    
    \end{align*}
    Observe the following:
    \begin{enumerate}
        \item[$\bullet$] $[5,7,8,10,11,13]\in \Delta(I(C_{13})^{[2]})^c$ since $\{1,2,3,4\}$ is a facet in $\Delta(I(C_{13})^{[2]})$;

        \item[$\bullet$] $[4,7,8,10,11,13]\in \Delta(I(C_{13})^{[2]})^c$ since $\{1,2\}\cup\{5,6\}$ is a facet in $\Delta(I(C_{13})^{[2]})$;

        \item[$\bullet$] $[4,5,8,10,11,13]\in \Delta(I(C_{13})^{[2]})^c$ since $\{1,2\}\cup\{6,7\}$ is a facet in $\Delta(I(C_{13})^{[2]})$;

        \item[$\bullet$] $[4,5,7,10,11,13]\in \Delta(I(C_{13})^{[2]})^c$ since $\{1,2\}\cup\{8,9\}$ is a facet in $\Delta(I(C_{13})^{[2]})$;

        \item[$\bullet$] $[4,5,7,8,11,13]\in \Delta(I(C_{13})^{[2]})^c$ since $\{1,2\}\cup\{9,10\}$ is a facet in $\Delta(I(C_{13})^{[2]})$;

        \item[$\bullet$] $[4,5,7,8,10,13]\in \Delta(I(C_{13})^{[2]})^c$ since $\{1,2\}\cup\{11,12\}$ is a facet in $\Delta(I(C_{13})^{[2]})$;

        \item[$\bullet$] $[4,5,7,8,10,11]\in \Delta(I(C_{13})^{[2]})^c$ since $\{1,2\}\cup\{12,13\}$ is a facet in $\Delta(I(C_{13})^{[2]})$.
    \end{enumerate}
    One can see that $\partial_5([\alpha])=0$. Moreover, $[4,5,7,8,10,11,13]\notin \Delta(I(C_{13})^{[2]})^c$ since there is no $2$-matching in the induced subgraph of $C_{13}$ corresponding to the set of vertices $\{1,2,3,6,9,12\}$. Next, in order to show that $[\alpha]\notin\mathrm{Im}(\partial_6)$, we consider the row $R$ in the augmented matrix $(A\mid \b_{\alpha})$ corresponding to the basis element $[5,7,8,10,11,13]\in\B_5$. The non-zero entries in the row $R$ are depicted in \Cref{Example row R}. Here, for example, the column $C(\overline{8})$ indicates the column corresponding to the basis element $[5,7,8,9,10,11,13]\in \B_6$.
    
    \begin{table}[h!]
	\centering
	\footnotesize
	\scalebox{1.2}{
		\begin{tabular}{c|c|c|c|c}
			& $C(\overline{5})$ & $C(\overline{8})$ & $C(\overline{11})$  & $\mathbf{b}_\alpha$     \\ \hline
			$R$  & $-1$           & $-1$           & $-1$            & $1$                     
		\end{tabular}
	}
	\caption{The non-zero entries of $R$ in $(A \vert \mathbf{b}_\gamma)$.}
	\label{Example row R}	
\end{table}

In \Cref{Example Step 1} we present the relevant rows used in Step $1$. Here, for example, $R(5)$ and $C(5,\overline{11})$ denote the row and column corresponding to the basis elements $[6,7,8,10,11,13]\in\B_5$ and $[6,7,8,10,11,12,13]\in\B_6$, respectively. In each row, $0$ appears in the remaining columns which are not drawn here due to the matrix's large size.
\begin{table}[h!]
	\centering
	\footnotesize
	\scalebox{1.1}{
		\begin{tabular}{c|c|c|c|c|c|c|c|c|c|c}
			& $C(\overline{5})$ & $C(\overline{8})$ & $C(\overline{11})$ & $C(5,\overline{8})$ & $C(\overline{5},8)$ & $C(5,\overline{11})$ & $C(\overline{5},11)$ & $C(8,\overline{11})$ & $C(\overline{8},11)$ & $\mathbf{b}_\alpha$     \\ \hline
			$R(5)$  & $1$ & $0$ & $0$ & $-1$ & $0$ & $-1$ & $0$ & $0$ & $0$ & $0$\\
   \hline $R(8)$ & $0$ & $1$ & $0$ & $0$ & $-1$ & $0$ & $0$ & $-1$ & $0$ & $0$\\
   \hline $R(11)$ & $0$ & $0$ & $1$ & $0$ & $0$ & $0$ & $-1$ & $0$ & $-1$ & $0$
		\end{tabular}
	}
	\caption{Rows used for the elementary operation in step $1$.}
	\label{Example Step 1}	
\end{table}

In \Cref{Example Step 2} we present the relevant rows used in Step $2$. Here, for example, $R(8,11)$ and $C(5,8,\overline{11})$ denote the row and column corresponding to the basis elements $[5,7,9,10,12,13]\in\B_5$ and $[6,7,9,10,11,12,13]\in\B_6$, respectively. In each row, $0$ appears in the remaining columns which are again not drawn here due to the matrix's large size.
\begin{table}[h!]
	\centering
	\footnotesize
	\scalebox{0.95}{
		\begin{tabular}{c|c|c|c|c|c|c|c|c|c|c}
			& $C(\overline{5},8)$ & $C(5,\overline{8})$ & $C(\overline{5},11)$ & $C(5,\overline{11})$ &   $C(\overline{8},11)$ & $C(8,\overline{11})$ & $C(\overline{5},8,11)$ & $C(5,\overline{8},11)$ & $C(5,8,\overline{11})$ & $\mathbf{b}_\alpha$     \\ \hline
			$R(5,8)$  & $1$ & $1$ & $0$ & $0$ & $0$ & $0$ & $0$ & $0$ & $-1$ & $0$\\
   \hline $R(5,11)$ & $0$ & $0$ & $1$ & $1$ & $0$ & $0$ & $0$ & $-1$ & $0$ & $0$\\
   \hline $R(8,11)$ & $0$ & $0$ & $0$ & $0$ & $1$ & $1$ & $-1$ & $0$ & $0$ & $0$
		\end{tabular}
	}
	\caption{Rows used for the elementary operation in step $2$.}
	\label{Example Step 2}	
\end{table}

\begin{table}[h!]
	\centering
	\footnotesize
	\scalebox{1.2}{
		\begin{tabular}{c|c|c|c|c}
			& $C(\overline{5},8,11)$ & $C(5,\overline{8},11)$ & $C(5,8,\overline{11})$ & $\mathbf{b}_\alpha$     \\ \hline
			$R(5,8,11)$  & $1$ & $1$ & $1$ & $0$
		\end{tabular}
	}
	\caption{Row used for the elementary operation in the final step.}
	\label{Example last step}	
\end{table}
In \Cref{Example last step}, we depict the row $R(5,8,11)$ and its non-zero columns along with $\b_{\alpha}$. Here $R(5,8,11)$ denotes the row corresponding to the basis element $[6,7,9,10,12,13]\in\B_5$. It is easy to see that after this step all entries in the row $R$ become $0$, except the entry $1$ corresponding to the column $\b_{\alpha}$. Thus $\mathrm{rank}(A)<\mathrm{rank}(A\mid\b_{\alpha})$.

\end{example}

\begin{corollary}\label{linear even cycle}
    Let $C_n$ be a cycle of length $n$. Then $I(C_n)^{[k]}$ has a linear resolution if and only if $k=\nu(C_n)$ in case of odd $n$, and $k\in\{\nu(C_n),\nu(C_n)-1\}$ in case of even $n$.
\end{corollary}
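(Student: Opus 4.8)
The plan is to read the corollary off directly from \Cref{cycle reg theorem}. The first observation is that $I(C_n)^{[k]}$ is generated in a single degree: every minimal generator is $\x_A$ with $A=\bigcup_{e\in M}e$ for some $k$-matching $M$ of $C_n$, and since the edges of $M$ are pairwise disjoint we have $|A|=2k$. Hence $I(C_n)^{[k]}$ is equigenerated in degree $2k$, and by the definition of a linear resolution (recalled in the commutative algebra preliminaries), $I(C_n)^{[k]}$ has a linear resolution if and only if $\reg(I(C_n)^{[k]})=2k$.

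Next I would substitute the regularity formula. By \Cref{cycle reg theorem}, $\reg(I(C_n)^{[k]})=2k+\lfloor\frac{n-2k}{3}\rfloor$, and since $k\le\nu(C_n)=\lfloor\frac n2\rfloor$ we have $n-2k\ge 0$. Therefore $\reg(I(C_n)^{[k]})=2k$ if and only if $\lfloor\frac{n-2k}{3}\rfloor=0$, which is equivalent to $0\le n-2k\le 2$. A short parity check now finishes the argument. If $n$ is odd, then $n-2k$ is odd, so $0\le n-2k\le 2$ forces $n-2k=1$, i.e. $k=\frac{n-1}{2}=\lfloor\frac n2\rfloor=\nu(C_n)$. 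If $n$ is even, then $n-2k$ is even, so $0\le n-2k\le 2$ forces $n-2k\in\{0,2\}$, i.e. $k\in\{\frac n2,\frac n2-1\}=\{\nu(C_n),\nu(C_n)-1\}$; note that $\nu(C_n)-1\ge 1$ for every even cycle (the smallest being $C_4$), so this is a genuine range of admissible values of $k$. Both implications are captured by these equivalences, giving the ``if and only if''.

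\textbf{Main obstacle.} There is essentially none: all the substance is contained in \Cref{cycle reg theorem}, and once that is available the corollary reduces to a one-line computation together with a parity case distinction. The only point that deserves a moment's care is verifying that $I(C_n)^{[k]}$ is equigenerated in degree $2k$, so that ``having a linear resolution'' is equivalent to the numerical equality $\reg(I(C_n)^{[k]})=2k$ rather than to some other condition; this is immediate from the $k$-matching description of the minimal generators.
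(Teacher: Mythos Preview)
Your proposal is correct and follows exactly the approach the paper has in mind: the paper's own proof is simply ``Follows from \Cref{cycle reg theorem}'', and you have spelled out precisely that deduction, including the equigeneration in degree $2k$ and the parity case analysis on $n-2k$.
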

\begin{proof}
    Follows from \Cref{cycle reg theorem}.
\end{proof}

We now proceed to investigate the depth of square-free powers of edge ideals of cycles. First, we need the following lemmas, which deal with the depth of square-free powers of edge ideals of paths and their induced subgraphs.

\begin{lemma}\label{path depth}\cite[cf. Theorem 2.10]{CFL1}
    Let $P_n$ denote the path graph on $n$ vertices. Then for each $1\le k\le \nu(P_n)=\lfloor\frac{n}{2}\rfloor$,
    \[
    \d(R/I(P_n)^{[k]})=\begin{cases}
        \lceil\frac{n}{3}\rceil+k-1&\text{ for }1\le k\le \lceil\frac{n}{3}\rceil,\\
        2k-1&\text{ otherwise.}
    \end{cases}
    \]
\end{lemma}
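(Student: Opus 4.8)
The plan is to induct on $n$, using the colon/sum decompositions from \Cref{lemcolonedge} together with the depth formulas for induced subgraphs that appear after removing a leaf and its neighbor. Since this statement is quoted directly from \cite[Theorem 2.10]{CFL1}, a self-contained reproof is not strictly needed, but I would reconstruct it as follows. The boundary cases are immediate: when $k=\lfloor n/2\rfloor$, \Cref{depth highest power} gives $\d(R/I(P_n)^{[k]})=2k-1$, which matches the second branch whenever $\lfloor n/2\rfloor>\lceil n/3\rceil$; when $k=1$, one recovers Jacques' formula $\d(R/I(P_n))=\lceil\frac{n-1}{3}\rceil=\lceil\frac n3\rceil$ (after noting $n\ge 2$), which is the first branch with $k=1$. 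For the inductive step, write $P_n$ with leaf $x_n$ and its neighbor $x_{n-1}$, and apply \Cref{lemdepthequal}(i) to the monomial $x_{n-1}x_n$: we get $\d(R/I(P_n)^{[k]})\in\{\d(R/(I(P_n)^{[k]}+\langle x_{n-1}x_n\rangle)),\ \d(R/(I(P_n)^{[k]}:x_{n-1}x_n))\}$.

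For the colon term, \Cref{sqf colon}(iii) gives $(I(P_n)^{[k]}:x_{n-1}x_n)=I(G_1)^{[k-1]}$ with $G_1\cong P_{n-2}$ (the two pendant vertices $x_{n-1},x_n$ disappear and no new edges are created since $x_n$ has no other neighbor), so by induction and \Cref{lemdepthextravar} its depth is $\lceil\frac{n-2}{3}\rceil+(k-1)-1+2=\lceil\frac{n-2}{3}\rceil+k$ in the relevant range, and $2(k-1)-1+2=2k-1$ otherwise. For the "plus" term, I would further split $I(P_n)^{[k]}+\langle x_{n-1}x_n\rangle$ by colon/sum on $x_{n-1}$ (or directly: $\big(I(P_n)^{[k]}+\langle x_{n-1}x_n\rangle\big)+\langle x_n\rangle=I(P_{n-1})^{[k]}+\langle x_n\rangle$ via \Cref{lemcolonedge}(i), and the colon by $x_n$ together with \Cref{colon comma exchange} produces $\langle x_{n-1}\rangle+(I(P_n\setminus x_n)^{[k]}:x_{n-1})$-type pieces reducing to smaller paths), then apply the induction hypothesis to each path that appears and track which branch of the formula governs it. Comparing the two candidate values for $\d(R/I(P_n)^{[k]})$ via \Cref{lemdepthequal}(i)–(ii) and checking that the claimed value always lies between the relevant minimum and is actually attained, one concludes the formula.

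The main obstacle is bookkeeping rather than conceptual: several subideals appear (paths of length $n-1$, $n-2$, $n-3$ in square-free powers $k$ and $k-1$, each tensored with a handful of extra variables), and for each one must determine precisely which branch of the two-branch depth formula applies, since the threshold $k\le\lceil n/3\rceil$ shifts as $n$ decreases by $1,2,3$. In particular the transition range $k\approx\lceil n/3\rceil$ is delicate: there $\lceil\frac n3\rceil+k-1$ and $2k-1$ nearly coincide, and one must argue carefully that \Cref{lemdepthequal}(i) selects the correct one (using the lower bound from \Cref{lemdepthequal}(ii) applied to the two pieces) rather than the spurious neighbor value. Handling the parity of $n$ modulo $3$ uniformly — so that the floor/ceiling identities like $\lceil\frac{n-2}{3}\rceil=\lceil\frac n3\rceil-1$ hold in all residue classes — is the part I would write out most carefully.
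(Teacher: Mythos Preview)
The paper does not prove this lemma at all: it is simply quoted from \cite[Theorem 2.10]{CFL1} and used as a black box in the proofs of \Cref{path comma proposition} and \Cref{cycle depth lower bound}. There is therefore no ``paper's own proof'' to compare your proposal against; you have correctly noticed this yourself when you remark that a self-contained reproof is not strictly needed.

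As for the sketch you give, the overall strategy (induction on $n$ via colon/sum decompositions on a leaf edge) is certainly the natural one and is in the spirit of how \cite{CFL1} treats the normalized depth function of forests. One small slip worth flagging: your claimed base case $\d(R/I(P_n))=\lceil\tfrac{n-1}{3}\rceil$ and the assertion that this equals $\lceil\tfrac{n}{3}\rceil$ for $n\ge 2$ is false as written (take $n=4$: $\lceil 1\rceil=1$ but $\lceil\tfrac{4}{3}\rceil=2$). The depth of the path edge ideal is in fact $\lceil\tfrac{n}{3}\rceil$, not $\lceil\tfrac{n-1}{3}\rceil$; you have conflated the cycle formula from \Cref{cycle edge ideal depth} with the path formula. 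Beyond that, your own diagnosis of the obstacles is accurate: the bookkeeping in the transition range $k\approx\lceil n/3\rceil$ and the residue-of-$n$-mod-$3$ case analysis are precisely where the work lies, and your sketch does not yet carry this out.
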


\begin{remark}\label{path depth remark}
    From \Cref{path depth} it is easy to observe that, for $1\le k\le \lfloor\frac{n}{2}\rfloor$, $\d(R/I(P_n)^{[k]})\ge \lceil\frac{n}{3}\rceil+k-1$.
\end{remark}

\begin{lemma}\label{path comma proposition}
    Let $P_n$ denote the path graph on $n$ vertices and $1\le k\le\lfloor\frac{n}{2}\rfloor$ be an integer. Let $H$ denote an induced path of $P_n$ containing a simplicial vertex of $P_n$ such that $H$ contains at most $n-2k+1$ vertices. Then $\mathrm{depth}(R/I(P_n)^{[k]}+I(H))\ge \lceil \frac{n}{3}\rceil+k-1$.
\end{lemma}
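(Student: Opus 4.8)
The plan is to induct on the number of vertices of $H$, using the short exact sequence technique via \Cref{lemdepth}(ii). Let $H$ be an induced path of $P_n$ containing a simplicial vertex of $P_n$; say $V(P_n)=\{x_1,\dots,x_n\}$ with the usual edges, and after relabeling assume $H$ is the induced path on $\{x_1,\dots,x_h\}$ with $h\le n-2k+1$. The base case is $h\le 1$: then $I(H)=\langle 0\rangle$ and the bound is exactly \Cref{path depth remark}. For the inductive step, I would split on the last edge $\{x_{h-1},x_h\}$ of $H$ by writing $L:=I(P_n)^{[k]}+I(H)$ and considering $L+\langle x_h\rangle$ and $(L:x_h)$; since $x_h$ is a monomial, \Cref{lemdepth}(ii) gives $\d(R/L)\ge\min\{\d(R/(L+\langle x_h\rangle)),\d(R/(L:x_h))\}$, so it suffices to bound each of these two quotients below by $\lceil\frac n3\rceil+k-1$.

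For the first quotient, $L+\langle x_h\rangle = I(P_n\setminus x_h)^{[k]} + I(H\setminus x_h) + \langle x_h\rangle$ by \Cref{lemcolonedge}(i) applied to any edge of $P_n$ at $x_h$; now $P_n\setminus x_h$ is a disjoint union $P_{h-1}\sqcup P_{n-h}$ (one component possibly empty), and $H\setminus x_h$ is the induced path on $\{x_1,\dots,x_{h-1}\}$, which is an induced path of the component $P_{h-1}$ containing a simplicial vertex of it. The delicate point is that $k$ may exceed $\nu(P_{h-1})$, in which case $I(P_{h-1})^{[k]}=\langle 0\rangle$ and one argues directly; otherwise, by \Cref{depth sum} (splitting the ambient ring along the two components and the extra variable $x_h$) together with the induction hypothesis applied to the component $P_{h-1}$ and \Cref{path depth remark} applied to $P_{n-h}$, one gets a lower bound which needs to be checked to be at least $\lceil\frac n3\rceil+k-1$; here the constraint $h\le n-2k+1$ is exactly what makes the arithmetic of the floor and ceiling functions work out. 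For the second quotient, \Cref{lemcolonedge}(ii) and \Cref{colon comma exchange} give $(L:x_h)=I(P_n\setminus N[x_h])^{[k]} + x_{h-1}I(P_n\setminus\{x_h,x_{h-1}\})^{[k-1]}$-type terms plus $\langle\,\rangle$ contributions from $I(H):x_h=\langle x_{h-1}\rangle$; because $x_{h-1}\in (L:x_h)$, this collapses to $\langle x_{h-1}\rangle + (I(P_n)^{[k]}:x_h)$ restricted appropriately, and one identifies it as $\langle x_{h-1}\rangle + I(P_{n-3})^{[k-1]} + \langle x_{h+1}\rangle$ (with the path $P_n\setminus N[x_h]\cong P_{n-3}$ when $x_h$ is interior, or a single $P_{n-2}$ when $x_h$ is an endpoint of $P_n$). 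Applying \Cref{lemdepthextravar} and \Cref{path depth remark} to this, one gets $\d\ge \lceil\frac{n-3}3\rceil+(k-1)-1 + (\text{number of extra variables})$, and again the hypothesis $h\le n-2k+1$ ensures this meets the target bound.

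The main obstacle I anticipate is bookkeeping the floor/ceiling arithmetic and the edge cases: whether $x_h$ is an interior vertex or an endpoint of $P_n$ changes the structure of $N[x_h]$ and hence which path lengths appear; whether $H$ consists of a single edge versus a longer path changes whether $I(H):x_h$ or $I(H)+\langle x_h\rangle$ is zero; and whether $k\le\nu$ of the smaller paths that arise, since when $k$ is too large the relevant square-free power vanishes and the depth computation degenerates (though in those cases the bound is typically easy or even vacuous). The constraint $h\le n-2k+1$ should be used precisely to guarantee that after removing $x_h$ (and possibly $x_{h-1}$) there are still enough vertices left to carry a $k$-matching (resp.\ $(k-1)$-matching) in the large component, so that the recursion does not break down; keeping careful track of this inequality through the induction is where the real care is needed.
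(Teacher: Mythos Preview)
Your induction direction and splitting monomial both differ from the paper's, and the approach as described has a genuine gap in the comma branch.

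Concretely, take $n=10$, $k=2$, $h=4$. Then
\[
L+\langle x_4\rangle \;=\; I(P_{10}\setminus x_4)^{[2]}+I(P_3)+\langle x_4\rangle \;=\; I(P_6)^{[2]}+I(P_3)+\langle x_4\rangle,
\]
where $P_3$ is on $\{x_1,x_2,x_3\}$ and $P_6$ on $\{x_5,\dots,x_{10}\}$ (any $2$-matching of $P_3\sqcup P_6$ using an edge of $P_3$ is already in $I(P_3)$). By \Cref{depth sum}, \Cref{lemdepthextravar}, and \Cref{path depth}, this has depth $\lceil 3/3\rceil+\big(\lceil 6/3\rceil+1\big)=1+3=4$, while the target is $\lceil 10/3\rceil+1=5$. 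So the min-bound of \Cref{lemdepth}(ii) cannot close the induction here: the comma quotient is strictly too small. The underlying reason is that cutting $P_n$ at the interior vertex $x_h$ disconnects it, and $\lceil\frac{h-1}{3}\rceil+\lceil\frac{n-h}{3}\rceil$ can fall below $\lceil\frac{n}{3}\rceil$. Your description of the colon branch is also off: $P_n\setminus N_{P_n}[x_h]$ is $P_{h-2}\sqcup P_{n-h-1}$, not a single $P_{n-3}$, and no $\langle x_{h+1}\rangle$ term appears in $(L:x_h)$.

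The paper avoids this by inducting in the \emph{opposite} direction and splitting on an \emph{edge that extends} $H$ rather than on a vertex of $H$. Writing $|V(H)|=n-2k+2-m$, the base case is $m=1$, i.e.\ the \emph{largest} allowed $H$ (with $n-2k+1$ vertices); there the ideal $I(P_n)^{[k]}+I(H)$ collapses to the explicit ideal $\big\langle\prod_{j=n-2k+1}^n x_j\big\rangle+I(P_{n-2k+1})$, which is handled directly. For $m\ge 2$ one takes the edge $e=\{x_{|V(H)|},x_{|V(H)|+1}\}$: the comma $L+\langle e\rangle$ enlarges $H$ by one vertex (so induction applies with $m-1$), and the colon $(L:e)$, via \Cref{lemcolonedge}(iii), becomes $\langle x_{|V(H)|-1}\rangle+I(H')+I(P_{2k-3+m})^{[k-1]}$ for a shorter terminal path $H'$ inside a single path $P_{n-3}$, placing you squarely in the setting of \Cref{path comma proposition} (a single path, not a disjoint union). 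This keeps the ambient path connected throughout and the ceiling arithmetic never loses the needed unit.
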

\begin{proof}
    Let $V(P_n)=\{x_1,\ldots,x_n\}$ with $E(P_n)=\{\{x_ix_{i+1}\}\mid 1\le i\le n-1\}$, and let $1\le m\le n-2k+2$ be an integer. If $k=1$, then the formula follows from the above remark. Hence we may assume that $k\ge 2$. Without loss of generality, let $x_1$ be the simplicial vertex of $P_n$ contained in $H$. We show by induction on $m$ that if the number of vertices in $H$ is $n-2k+2-m$, then $\mathrm{depth}(R/I(P_n)^{[k]}+I(H))\ge \lceil \frac{n}{3}\rceil+k-1$. First, note that if $m=n-2k+2$ or $m=n-2k+1$, then $I(P_n)^{[k]}+I(H)=I(P_n)^{[k]}$, and thus, by \Cref{path depth} and \Cref{path depth remark}, $\d(R/I(P_n)^{[k]}+I(H))\ge \lceil \frac{n}{3}\rceil+k-1$. Therefore, we may assume that $m\le n-2k+3$. Now consider the case $m=1$. In this case, $E(H)=\{x_1x_2,\ldots,x_{n-2k}x_{n-2k+1}\}$. Then it is easy to see that 
    \[
    I(P_n)^{[k]}+I(H)=\left\l \prod_{j=n-2k+1}^nx_j,x_1x_2,x_2x_3,\ldots,x_{n-2k}x_{n-2k+1}\right\r.
    \]
    Notice that if $J_1$ denotes the ideal $((I(P_n)^{[k]}+I(H)):x_{n-2k})$, then $J_1=\l x_{n-2k-1},x_{n-2k+1}\r+\l x_1x_2,\ldots,x_{n-2k-3}x_{n-2k-2}\r$. Thus, by \Cref{lemdepthextravar} \& \ref{path depth}, $\d(R/J_1)= \lceil \frac{n-2k-2}{3} \rceil+2k\ge \lceil\frac{n}{3}\rceil+k-1$. Now suppose $J_2$ denotes the ideal $I(P_n)^{[k]}+I(H)+\l x_{n-2k}\r$. Then, 
    \[
    J_2=\left\l x_1x_2,\ldots,x_{n-2k-2}x_{n-2k-1},\prod_{j=n-2k+1}^nx_j,x_{n-2k}\right\r.
    \]
    Hence again by \Cref{depth sum}, \ref{lemdepthextravar} \& \ref{path depth}, $\d(R/J_2)=\lceil \frac{n-2k-1}{3}\rceil+2k-1\ge \lceil\frac{n}{3}\rceil+k-1$. Therefore, by \Cref{lemdepthequal}, $\d(R/I(P_n)^{[k]}+I(H))\ge \lceil\frac{n}{3}\rceil+k-1$. 

    Now suppose $H$ is an induced path on $n-2k+2-m$ number of vertices containing the vertex $x_1$, where $m\ge 2$. In that case, if $J_3$ denotes the ideal $I(P_n)^{[k]}+I(H)+\l x_{n-2k+2-m}x_{n-2k+3-m}\r$, then by the induction hypothesis $\d(R/J_3)\ge \lceil\frac{n}{3}\rceil+k-1$. Now consider the ideal $J_4=(I(P_n)^{[k]}+I(H)): x_{n-2k+2-m}x_{n-2k+3-m}$. By \Cref{lemcolonedge}, $J_4=\l x_1x_2,\ldots,x_{n-2k-1-m}x_{n-2k-m}\r+\l x_{n-2k+1-m}\r+I(P_{2k-3+m})^{[k-1]}$, where $P_{2k-3+m}$ is the induced path of $P_n$ on the vertex set $\{x_{n-2k+4-m},\ldots,x_n\}$. Therefore, by \Cref{depth sum}, \ref{lemdepthextravar} and \Cref{path depth remark}, $\d(R/J_4)\ge \lceil\frac{n-2k-m}{3}\rceil+\lceil\frac{2k-3+m}{3}\rceil+(k-1)-1+2\ge \lceil\frac{n}{3}\rceil+k-1$. Finally using \Cref{lemdepthequal}, we obtain $\d(R/I(P_n)^{[k]}+I(H))\ge\lceil\frac{n}{3}\rceil+k-1$.
\end{proof}

The following lemma is crucial in providing the lower bound for the depth of square-free powers of edge ideals of cycles in \Cref{cycle depth lower bound}.

\begin{lemma}\label{cycle depth path comma lower bound}
    Let $C_n$ denote the cycle on $n$ vertices, where $n\ge 4$, and let $2\le k\le \lfloor\frac{n}{2}\rfloor$ be an integer. Let $H$ denote an induced path of $C_n$ containing at most $n-2k+2$ vertices and also containing at least one edge. Then $\mathrm{depth}(R/I(C_n)^{[k]}+I(H))\ge \lceil \frac{n}{3}\rceil+k-1$.
\end{lemma}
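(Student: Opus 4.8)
The plan is to break the cycle and reduce everything to the path estimate \Cref{path comma proposition}, via a downward induction on the number of vertices of $H$. An induced path of $C_n$ with at least one edge is an arc of between $2$ and $n-1$ consecutive vertices, so after relabelling we may assume $H$ is the induced path on $x_1,\dots,x_t$ with $2\le t\le n-2k+2$; set $I:=I(C_n)^{[k]}+I(H)$. The combinatorial fact I would use repeatedly is: if $A$ is an arc of $C_n$ whose complementary arc has matching number $\le k-1$ — equivalently $|V(A)|\ge n-2k+3$ — then every $k$-matching of $C_n$ contains an edge of $A$, so $I(C_n)^{[k]}\subseteq I(A)$ and $I(C_n)^{[k]}+I(A)=I(A)$; the analogue holds for a path $P$ on $m$ vertices and a subpath $A$ containing an endpoint of $P$ as soon as $|V(A)|\ge m-2k+2$. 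The induction runs from $t=n-2k+2$ down to $t=2$.

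For the inductive step, assume $2\le t\le n-2k+1$ and apply \Cref{lemdepth}(ii) with $f=x_tx_{t+1}$, the edge of $C_n$ extending $H$ past $x_t$ (note $x_{t+1}\notin V(H)$). In the quotient branch, $I+\langle x_tx_{t+1}\rangle=I(C_n)^{[k]}+I(H^{+})$ with $H^{+}$ the induced path on $x_1,\dots,x_{t+1}$; since $t+1\le n-2k+2$, the induction hypothesis gives $\depth(R/(I+\langle x_tx_{t+1}\rangle))\ge\lceil n/3\rceil+k-1$. In the colon branch, \Cref{colon comma exchange} and \Cref{lemcolonedge} give $(I:x_tx_{t+1})=\langle x_{t-1}\rangle+I(P^{*})^{[k-1]}+I(H')$, where $P^{*}$ is the path on the $n-3$ vertices $x_{t+2},\dots,x_n,x_1,\dots,x_{t-2}$ and $H'=x_1,\dots,x_{t-2}$ is a subpath of $P^{*}$ ending at the endpoint $x_{t-2}$, with $|V(H')|=t-2\le n-2k=(n-3)-2(k-1)+1$. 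Hence \Cref{lemdepthextravar} and \Cref{path comma proposition} yield $\depth(R/(I:x_tx_{t+1}))\ge\lceil(n-3)/3\rceil+(k-1)-1+2=\lceil n/3\rceil+k-1$, and \Cref{lemdepth}(ii) finishes the step.

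For the base case $t=n-2k+2$: if $t=2$ (equivalently $n$ even and $k=\nu(C_n)$), then $I(C_n)^{[k]}=\langle x_1\cdots x_n\rangle$ is principal, so $I=\langle x_1x_2\rangle$ and $\depth(R/I)=n-1\ge\lceil n/3\rceil+k-1$. If $t\ge 3$, observe that the complementary arc of $H$ is a path on $2k$ vertices with a unique perfect matching, so $I=I(H)+\langle w\rangle$ where $w$ is the product of those $2k$ vertices; then apply \Cref{lemdepth}(ii) with $f=x_2$ (so $N_{C_n}(x_2)=\{x_1,x_3\}\subseteq V(H)$). The colon branch collapses by the observation above: $(I:x_2)=\langle x_1,x_3\rangle+I(P)^{[k]}+I(H')$ with $P=x_4,\dots,x_n$ and $H'=x_4,\dots,x_t$ a subpath of $P$ of length exactly $(n-3)-2k+2$, so $I(P)^{[k]}\subseteq I(H')$ and $(I:x_2)=\langle x_1,x_3\rangle+I(H')$, of depth $\lceil(n-2k-1)/3\rceil+2k-1$; one checks $\lceil(n-2k-1)/3\rceil+k\ge\lceil n/3\rceil$ via the elementary bound $k\ge\lceil(2k+1)/3\rceil$, so this branch is $\ge\lceil n/3\rceil+k-1$.

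The hard part, and the main technical obstacle, is the quotient branch of this base case: $I+\langle x_2\rangle=\langle x_2\rangle+I(P_{n-1})^{[k]}+I(H'')$ where $P_{n-1}=x_3x_4\cdots x_nx_1$ and $H''=x_3\cdots x_t$ is a subpath of $P_{n-1}$ of the \emph{maximal} admissible length $(n-1)-2k+1$. Quoting \Cref{path comma proposition} as a black box only gives $\depth\ge\lceil(n-1)/3\rceil+k-1$, which is one short of the target when $n\equiv 1\pmod 3$; this deficiency is genuine rather than wasteful, since a direct computation (e.g.\ $n=7$, $k=2$) shows the value equals $\lceil n/3\rceil+k-1$ exactly. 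So the plan is to replace that call by the sharper estimate $\depth(R/(I(P_m)^{[k]}+I(A)))\ge\lceil(m+1)/3\rceil+k-1$ for a subpath $A$ of maximal length $m-2k+1$ containing an endpoint of $P_m$ — provable by re-running the induction of \Cref{path comma proposition} while tracking that the subpath stays maximal at each step (so the extra unit of depth propagates), or equivalently by peeling one more layer of \Cref{lemdepth} off the explicit ideal $I(P_{n-1})^{[k]}+I(H'')=I(H'')+\langle w'\rangle$, which again has the form (subpath edge ideal) $+$ (one square-free monomial). With that in hand, \Cref{lemdepth}(ii) closes the base case and the induction is complete.
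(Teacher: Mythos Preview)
Your approach is essentially the paper's: the downward induction on $|V(H)|$ with the step ``extend $H$ by one edge and apply \Cref{path comma proposition} to the colon branch'' is exactly what the paper does, and the base case $t=n-2k+2$ is also handled by two layers of colon/comma reducing to explicit path computations. The only difference is the choice of splitting vertex in the base case. The paper splits first on $x_{n-2k+1}$ (your $x_{t-1}$): the colon branch collapses to $\langle x_{t-2},x_t\rangle+I(P_{t-3})$ just as your $(I:x_2)$ does, while the quotient $J_2=\langle x_{t-1}\rangle+I(P_{t-2})+\langle w\rangle$ is then split once more on $x_1$, yielding two ideals whose depths are computed directly. Your $I+\langle x_2\rangle=\langle x_2\rangle+I(H'')+\langle w\rangle$ has exactly the same shape as the paper's $J_2$ up to relabelling, so your proposed ``peeling one more layer'' \emph{is} the paper's second split and works without difficulty.

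One caution: your first proposed fix --- ``re-running the induction of \Cref{path comma proposition} while tracking that the subpath stays maximal at each step'' --- does not work as stated. That induction \emph{increases} $|V(H)|$ towards the maximal value, and the sharper bound $\lceil(m+1)/3\rceil+k-1$ is simply false for short subpaths (already for $H=\emptyset$, $m\equiv 0\pmod 3$, $k\le m/3$, since \Cref{path depth} gives equality $\lceil m/3\rceil+k-1$). The sharper bound holds only at the maximal length, where it is obtained by a single direct computation rather than an induction --- precisely your second option. So keep the ``peel one more layer'' alternative and drop the first.
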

\begin{proof}
    Let $V(C_n)=\{x_1,\ldots,x_n\}$ with $E(C_n)=\{\{x_ix_{i+1}\},\{x_1,x_n\}\mid 1\le i\le n-1\}$, and let $ 1\le m\le  n-2k+1$ be an integer. We show by induction on $m$ that if the number of vertices in $H$ is $n-2k+3-m$, then $\mathrm{depth}(R/I(C_n)^{[k]}+I(H))\ge \lceil \frac{n}{3}\rceil+k-1$. First, consider the case $m=1$. Without loss of generality, we can assume that $E(H)=\{x_1x_2,x_2x_3,\ldots,x_{n-2k+1}x_{n-2k+2}\}$. Then it is easy to see that 
    \[
    I(C_n)^{[k]}+I(H)=\left\l x_1\left(\prod_{j=n-2k+2}^nx_j\right), x_1x_2,x_2x_3,\ldots,x_{n-2k+1}x_{n-2k+2}\right\r.
    \]
        Notice that if $J_1$ denotes the ideal $((I(C_n)^{[k]}+I(H)):x_{n-2k+1})$, then $J_1=\l x_{n-2k},x_{n-2k+2}\r+\l x_1x_2,\ldots,x_{n-2k-2}x_{n-2k-1}\r$. Thus, by \Cref{lemdepthextravar} \& \ref{path depth}, $\d(R/J_1)= \lceil \frac{n-2k-1}{3} \rceil+2k-1\ge \lceil\frac{n}{3}\rceil+k-1$. Now, let us denote the ideal $I(C_n)^{[k]}+I(H)+\l x_{n-2k+1}\r$ by $J_2$. Then 
        \[
        J_2=\left\l x_1\left(\prod_{j=n-2k+2}^nx_j\right), x_1x_2,\ldots,x_{n-2k-1}x_{n-2k},x_{n-2k+1}\right\r.
        \]
        Next consider the ideal $(J_2:x_1)=\l x_2,x_{n-2k+1},\left(\prod_{j=n-2k+2}^nx_j\right), x_3x_4,x_4x_5,\ldots,x_{n-2k-1}x_{n-2k}\r$. By \Cref{depth sum}, \ref{lemdepthextravar} \& \ref{path depth}, $\d(R/(J_2:x_1))=\lceil \frac{n-2k-2}{3} \rceil+2k-2+1\ge \lceil\frac{n}{3}\rceil+k-1$. Moreover, we have $J_2+\l x_1\r=\l x_1,x_{n-2k+1},x_2x_3,x_3x_4,\ldots,x_{n-2k-1}x_{n-2k}\r$, and hence, using \Cref{depth sum}, \ref{lemdepthextravar} \& \ref{path depth} again, we obtain $\d(R/J_2+\l x_1\r)\ge \lceil \frac{n-2k-1}{3} \rceil+2k-1\ge\lceil\frac{n}{3}\rceil+k-1$. Now using \Cref{lemdepthequal}, we get $\d(R/J_2)\ge \lceil\frac{n}{3}\rceil+k-1$. Since $\d(R/J_1)\ge\lceil\frac{n}{3}\rceil+k-1$, apply \Cref{lemdepthequal} one more time to get $\d(R/I(C_n)^{[k]}+I(H))\ge \lceil\frac{n}{3}\rceil+k-1$.
        \par 
        
         Suppose $H$ is an induced path on $n-2k+3-m$ number of vertices, where $m\ge 2$. Without loss of generality, we can assume that $E(H)=\{x_1x_2,\ldots,x_{n-2k+2-m}x_{n-2k+3-m}\}$. In this case, if $J_3$ denotes the ideal $I(C_n)^{[k]}+I(H)+\l x_{n-2k+3-m}x_{n-2k+4-m}\r$, then by the induction hypothesis, $\d(R/J_3)\ge \lceil\frac{n}{3}\rceil+k-1$. Now consider the ideal $J_4=((I(C_n)^{[k]}+I(H)): x_{n-2k+3-m}x_{n-2k+4-m})$. Then by \Cref{lemcolonedge}, 
         \[
         J_4=\l x_{n-2k+2-m},x_1x_2,x_3x_4, \ldots,x_{n-2k-m}x_{n-2k+1-m}\r+I(P_{n-3})^{[k-1]},
         \]
         where $P_{n-3}$ is the induced path of $C_n$ on the vertex set $V(C_n)\setminus\{x_{n-2k+2-m},x_{n-2k+3-m},x_{n-2k+4-m}\}$. Note that the ideal $J_4$ can be written as $I(P_{n-3})^{[k-1]}+I(H)+\l x_{n-2k+2-m}\r$, where $H$ is the induced path of the corresponding $P_{n-3}$ on the vertex set $\{x_1,x_2,\ldots,x_{n-2k+1-m}\}$. Observe that $|V(H)|\le (n-3)-2(k-1)+1$ since $m\ge 2$. Therefore, by \Cref{lemdepthextravar} \& \ref{path comma proposition}, $\d(R/J_4)\ge \lceil\frac{n}{3}\rceil+k-1$. Finally using \Cref{lemdepthequal}, we obtain $\d(R/I(P_n)^{[k]}+I(H))\ge\lceil\frac{n}{3}\rceil+k-1$.
         %Now suppose $r\ge n-2k+2$. In that case $I(C_n)^{[k]}+I(H)=I(C_n)^{[k]}$. We show by induction on $n$ that $\d()$ Now by \Cref{lemcolonedge}, $I(C_n)^{[k]}:x_1x_2=I(C_{n-2})^{[k-1]}$, where $E(C_{n-2})=\{\{x_ix_{i+1}\},\{x_3,x_n\}\mid 3\le i\le n-1\}$. Hence using \Cref{lemdepthextravar}
\end{proof}

The depth of the edge ideal of a cycle is well-known and can be easily derived from the thesis of Jacques.

\begin{proposition}\label{cycle edge ideal depth}\cite[cf. Corollary 7.6.30]{JacquesThesis}
    Let $C_n$ denote the cycle on $n$ vertices. Then $$\d(R/I(C_n))=\left \lceil\frac{n-1}{3}\right\rceil.$$
\end{proposition}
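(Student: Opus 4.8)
The plan is to evaluate $\depth(R/I(C_n))$ by peeling off a single vertex via the recursion lemma \Cref{lemdepthequal}, which reduces everything to paths; the depth there is already available, since specialising \Cref{path depth} to $k=1$ gives $\depth(R/I(P_m))=\lceil\frac{m}{3}\rceil$ for every $m\ge 1$. The only point the recursion will not settle by itself is one residue class of $n$ modulo $3$, which I would handle homologically at the end.

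Concretely, fix a vertex $x_1$ of $C_n$, so $N_{C_n}(x_1)=\{x_2,x_n\}$. By \Cref{lemcolonedge}(i), $I(C_n)+\l x_1\r=I(C_n\setminus x_1)+\l x_1\r$ with $C_n\setminus x_1\cong P_{n-1}$, hence $\depth\bigl(R/(I(C_n)+\l x_1\r)\bigr)=\lceil\tfrac{n-1}{3}\rceil$. By \Cref{lemcolonedge}(ii) with $k=1$, $(I(C_n):x_1)=\l x_2,x_n\r+I(C_n\setminus\{x_1,x_2,x_n\})$ with $C_n\setminus\{x_1,x_2,x_n\}\cong P_{n-3}$, so \Cref{lemdepthextravar} together with \Cref{path depth} give $\depth\bigl(R/(I(C_n):x_1)\bigr)=\lceil\tfrac{n-3}{3}\rceil+1=\lceil\tfrac{n}{3}\rceil$. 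Then \Cref{lemdepthequal}(i),(ii) yield
\[
\left\lceil\tfrac{n-1}{3}\right\rceil\ \le\ \depth(R/I(C_n))\quad\text{and}\quad \depth(R/I(C_n))\ \in\ \Bigl\{\left\lceil\tfrac{n-1}{3}\right\rceil,\ \left\lceil\tfrac{n}{3}\right\rceil\Bigr\}.
\]
Whenever $n\not\equiv 1\pmod 3$ the two candidate values both equal $\lceil\tfrac{n-1}{3}\rceil$, and the proof is complete in that case.

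The remaining case $n\equiv 1\pmod 3$, say $n=3k+1$, is the heart of the matter and the main obstacle: here the quotient branch has depth $k=\tfrac{n-1}{3}$ while the colon branch has depth $k+1=\tfrac{n+2}{3}$, so the recursion only tells us $\depth(R/I(C_n))\in\{k,k+1\}$ without deciding which. To exclude the larger value I would bound the projective dimension from below and use Auslander--Buchsbaum. By Hochster's formula (equivalently, the reduced-homology description of Betti numbers already in play in the paper, cf.\ \Cref{facet homology formula}), $\beta_{2k+1,\,n}\bigl(R/I(C_n)\bigr)=\dim_{\K}\widetilde H_{k-1}\bigl(\operatorname{Ind}(C_n);\K\bigr)$, where $\operatorname{Ind}(C_n)$ is the independence complex of $C_n$. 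Since $\operatorname{Ind}(C_{3k+1})$ is homotopy equivalent to the sphere $S^{k-1}$ (a classical computation; alternatively one exhibits an explicit nonbounding $(k-1)$-cycle, in the spirit of the argument in the proof of \Cref{cycle reg theorem}), this homology is nonzero, so $\pd(R/I(C_n))\ge 2k+1$ and hence $\depth(R/I(C_n))\le n-(2k+1)=k=\lceil\tfrac{n-1}{3}\rceil$. Combined with the lower bound above, this gives $\depth(R/I(C_n))=\lceil\tfrac{n-1}{3}\rceil$. (If one prefers to avoid topology, Jacques' explicit formulas for the $\mathbb N$-graded Betti numbers of $R/I(C_n)$ determine $\pd(R/I(C_n))$ directly, hence the depth via Auslander--Buchsbaum.) Thus the whole difficulty is concentrated in the case $n\equiv 1\pmod 3$, where some genuinely non-recursive input — the topology of $\operatorname{Ind}(C_n)$, or an explicit Betti-number computation — is needed.
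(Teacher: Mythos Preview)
The paper does not prove this proposition at all: it is stated as a known result and attributed to Jacques' thesis, so there is no paper-proof to compare against. Your argument is correct and self-contained. Two small citation issues worth cleaning up: \Cref{lemcolonedge} is stated only for $k\ge 2$, so strictly speaking you cannot invoke it for $k=1$; of course the identity $(I(G):x)=\langle N_G(x)\rangle+I(G\setminus N_G[x])$ is elementary and standard, so just state it directly. Also, the formula \eqref{facet homology formula} in the paper is the Alilooee--Faridi facet-complex formula, not Hochster's formula, so the parenthetical ``equivalently'' is misleading; simply appeal to Hochster's formula (or to Jacques' explicit Betti computation, as you note) without that cross-reference.
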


Now we are ready to prove a sharp lower bound for the depth of square-free powers of edge ideals of cycles. Note that $I(C_3)^{[k]}=\l 0\r$ if $k>1$. Therefore, it is enough to consider $n\geq 4$ to investigate the square-free powers of $I(C_n)$.

\begin{theorem}\label{cycle depth lower bound}
    Let $C_n$ denote the cycle on $n$ vertices with $n\ge 4$, and let $2\le k\le \lfloor\frac{n}{2}\rfloor$ be an integer. Then $\d(R/I(C_n)^{[k]})\ge \lceil\frac{n}{3}\rceil+k-1$.
\end{theorem}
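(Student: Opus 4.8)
The plan is to induct on $k$ and split through a single edge. Since the boundary case $k=\nu(C_n)=\lfloor n/2\rfloor$ is already handled by \Cref{depth highest power} (which gives $\depth(R/I(C_n)^{[k]})=2k-1$, and one checks $2k-1\ge\lceil n/3\rceil+k-1$ for both parities of $n$, using $\lceil(2k+1)/3\rceil\le k$ when $n=2k+1$), I may assume $2\le k<\nu(C_n)$, so in particular $n\ge 2k+2\ge 6$. Pick the edge $\{x_1,x_2\}$ of $C_n$ and apply \Cref{lemdepth}(ii) with $f=x_1x_2$, which reduces the claim to showing that both $\depth(R/(I(C_n)^{[k]}+\langle x_1x_2\rangle))$ and $\depth(R/(I(C_n)^{[k]}:x_1x_2))$ are at least $\lceil n/3\rceil+k-1$.

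The first term is immediate: $I(C_n)^{[k]}+\langle x_1x_2\rangle=I(C_n)^{[k]}+I(H)$, where $H$ is the induced path on $\{x_1,x_2\}$, and $H$ has $2\le n-2k+2$ vertices (as $k\le\lfloor n/2\rfloor$) and at least one edge, so \Cref{cycle depth path comma lower bound} gives exactly the bound $\lceil n/3\rceil+k-1$. For the colon term, \Cref{sqf colon}(iii) yields $(I(C_n)^{[k]}:x_1x_2)=I(C_{n-2})^{[k-1]}$, where $C_{n-2}$ is the cycle on $\{x_3,\dots,x_n\}$ obtained by deleting $x_1,x_2$ and adding the edge $\{x_3,x_n\}$; using \Cref{lemdepthextravar} to account for the two deleted variables, it suffices to bound $\depth$ of $I(C_{n-2})^{[k-1]}$ in the smaller ring by $\lceil n/3\rceil+k-3$. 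When $k=2$ this is the edge ideal $I(C_{n-2})$, whose depth is $\lceil(n-3)/3\rceil=\lceil n/3\rceil-1$ by \Cref{cycle edge ideal depth}, which suffices. When $k\ge 3$, the parameters $(C_{n-2},k-1)$ satisfy $k-1\ge 2$, $n-2\ge 4$, $k-1\le\lfloor(n-2)/2\rfloor$, so the induction hypothesis gives depth $\ge\lceil(n-2)/3\rceil+k-3$, and $\lceil(n-2)/3\rceil\ge\lceil(n-3)/3\rceil=\lceil n/3\rceil-1$ finishes it. Combining the two branches via \Cref{lemdepth}(ii) completes the induction.

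From this vantage point there is no real obstacle remaining: the substantive analytic input is \Cref{cycle depth path comma lower bound}, and the argument above is a short induction plus routine ceiling estimates, the only delicate points being the arithmetic of the floors and ceilings and avoiding the degenerate situation $n=4$ (which is subsumed in the $k=\nu$ case treated separately). Were one proving the statement from scratch, the genuine difficulty would lie in establishing the path-comma bound \Cref{cycle depth path comma lower bound}, which itself rests on the corresponding statement for paths, \Cref{path comma proposition}.
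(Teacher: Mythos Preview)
Your proof is correct and follows essentially the same route as the paper: induct on $k$, split via $f=x_1x_2$, invoke \Cref{cycle depth path comma lower bound} for the sum branch, and handle the colon branch using \Cref{sqf colon}(iii) together with \Cref{cycle edge ideal depth} (for $k=2$) or the induction hypothesis (for $k\ge 3$). The only cosmetic difference is that you peel off the case $k=\nu(C_n)$ at the outset via \Cref{depth highest power}, which lets you avoid the mildly degenerate instance $C_{n-2}$ with $n-2<4$ that the paper's base case implicitly absorbs.
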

\begin{proof}
    The proof is by induction on $k$. First, consider the case $k=2$. In this case, by \Cref{lemcolonedge}, $(I(C_n)^{[2]}:x_1x_2)=I(C_{n-2})$, where $E(C_{n-2})=\{\{x_ix_{i+1}\},\{x_3,x_n\}\mid 3\le i\le n-1\}$. Thus, using \Cref{lemdepthextravar} and \Cref{cycle edge ideal depth}, we obtain $\d(R/(I(C_n)^{[2]}:x_1x_2))=2+\lceil\frac{n-3}{3}\rceil=\lceil\frac{n}{3}\rceil+1$. Moreover, by \Cref{cycle depth path comma lower bound}, $\d(R/I(C_n)^{[2]}+\l x_1x_2\r)\ge \lceil\frac{n}{3}\rceil+1$ and consequently, using \Cref{lemdepthequal}, we get $\d(R/I(C_n)^{[2]})\ge \lceil\frac{n}{3}\rceil+1$.

    Now let $k\ge 3$ be a positive integer. Then, as before, by \Cref{lemcolonedge}, $(I(C_n)^{[k]}:x_1x_2)=I(C_{n-2})^{[k-1]}$. Hence, using \Cref{lemdepthextravar} and the induction hypothesis, we obtain $\d(R/(I(C_n)^{[k]}:x_1x_2))=2+\lceil\frac{n-2}{3}\rceil+k-2\ge \lceil\frac{n}{3}\rceil+k-1$. Moreover, by \Cref{cycle depth path comma lower bound}, $\d(R/(I(C_n)^{[k]}+\l x_1x_2\r))\ge \lceil\frac{n}{3}\rceil+k-1$. Consequently, $\d(R/I(C_n)^{[k]})\ge \lceil\frac{n}{3}\rceil+k-1$ (by \Cref{lemdepthequal}).
\end{proof}

Using \Cref{cycle depth lower bound} one can derive an explicit formula for $\d(R/I(C_n)^{[2]})$ as stated below. 

\begin{corollary}\label{cycle depth 2}
    Let $C_n$ denote the cycle on $n$ vertices with $n\geq 4$. Then $\d(R/I(C_n)^{[2]})= \lceil\frac{n}{3}\rceil+1$.
\end{corollary}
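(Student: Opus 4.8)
The plan is to pair the lower bound already established in \Cref{cycle depth lower bound} with a matching upper bound coming from a single colon computation. Taking $k=2$ in \Cref{cycle depth lower bound} gives $\d(R/I(C_n)^{[2]})\ge \lceil\frac{n}{3}\rceil+1$, so only the reverse inequality needs proof.

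For the upper bound I would localize at the monomial $x_1x_2$. By \Cref{lemcolonedge}(iii) (equivalently by \Cref{second power colon}) we have $(I(C_n)^{[2]}:x_1x_2)=I(C_{n-2})$, where $C_{n-2}$ is the cycle on the vertices $x_3,\ldots,x_n$ together with the extra edge $\{x_3,x_n\}$; for $n=4$ this degenerates to a single edge, but the depth formula we use below still applies in that case. Since $x_1x_2$ is an edge of $C_n$ and not a product of two disjoint edges, $x_1x_2\notin I(C_n)^{[2]}$, so \Cref{lemdepthequal}(iii) yields $\d(R/I(C_n)^{[2]})\le \d(R/(I(C_n)^{[2]}:x_1x_2))$.

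It then remains to evaluate the right-hand side. The ideal $I(C_{n-2})$ involves only the variables $x_3,\ldots,x_n$, so \Cref{lemdepthextravar} contributes a $+2$ for the variables $x_1,x_2$, and \Cref{cycle edge ideal depth} gives $\d(R/(I(C_n)^{[2]}:x_1x_2))=2+\lceil\frac{n-3}{3}\rceil=\lceil\frac{n}{3}\rceil+1$. Combining this with the lower bound from \Cref{cycle depth lower bound} finishes the proof.

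I do not expect a genuine obstacle here, as the statement is essentially a corollary of \Cref{cycle depth lower bound}. The only points that deserve a sentence of care are the routine identity $2+\lceil\frac{n-3}{3}\rceil=\lceil\frac{n}{3}\rceil+1$ and the degenerate case $n=4$, where the graph $C_{n-2}$ obtained from the colon is a single edge rather than an honest cycle; one should check that \Cref{cycle edge ideal depth} (or a direct computation) still gives depth $1$ there, which it does.
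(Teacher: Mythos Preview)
Your argument is correct and essentially identical to the paper's own proof: both use \Cref{cycle depth lower bound} for the lower bound and the colon $(I(C_n)^{[2]}:x_1x_2)=I(C_{n-2})$ together with \Cref{lemdepthequal}(iii), \Cref{lemdepthextravar}, and \Cref{cycle edge ideal depth} for the upper bound. If anything, your write-up is slightly more careful in justifying $x_1x_2\notin I(C_n)^{[2]}$ and in flagging the $n=4$ degeneration.
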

\begin{proof}
    By \Cref{cycle depth lower bound}, we have $\d(R/I(C_n)^{[2]})\ge \lceil\frac{n}{3}\rceil+1$. Also, it follows from \Cref{lemdepthequal} that $\d(R/I(C_n)^{[2]})\le\d(R/I(C_n)^{[2]}:x_1x_2)=\d(R/I(C_{n-2}))+2$. Thus, by \Cref{cycle edge ideal depth}, $\d(R/I(C_n)^{[2]})\le \lceil\frac{n-3}{3}\rceil+2=\lceil\frac{n}{3}\rceil+1$. Hence, $\d(R/I(C_n)^{[2]})= \lceil\frac{n}{3}\rceil+1$.
\end{proof}

\begin{proposition}\label{prop cycle depth m-1}
    Let $C_n$ be a cycle of even length. Then $\d(R/I(C_n)^{[k]})=2k-1$ for $k=\nu(C_n)-1=\lfloor\frac{n}{2}\rfloor-1$.
\end{proposition}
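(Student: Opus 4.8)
The plan is to argue by induction on the even length $n$. The base case $n=4$ has $k=1$, and $\depth(R/I(C_4))=\lceil 3/3\rceil=1=2k-1$ by \Cref{cycle edge ideal depth}. So let $n=2m$ with $m\ge 3$, set $k=m-1=\nu(C_n)-1$, write $V(C_n)=\{x_1,\ldots,x_n\}$ with $E(C_n)=\{\{x_i,x_{i+1}\},\{x_1,x_n\}\mid 1\le i\le n-1\}$, and work throughout with the edge $\{x_1,x_2\}$. The whole argument runs on the colon identities of \Cref{lemcolonedge} combined with the depth lemmas \Cref{lemdepthequal}, \Cref{lemdepthextravar} and \Cref{depth sum}.

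First I would contract the full edge $\{x_1,x_2\}$. By \Cref{lemcolonedge}(iii), $(I(C_n)^{[k]}:x_1x_2)=I(C_{n-2})^{[k-1]}$, where $C_{n-2}$ is the cycle on $\{x_3,\ldots,x_n\}$ with the extra edge $\{x_3,x_n\}$; since $k-1=\nu(C_{n-2})-1$, the induction hypothesis and \Cref{lemdepthextravar} give $\depth\big(R/(I(C_n)^{[k]}:x_1x_2)\big)=\big(2(k-1)-1\big)+2=2k-1$. As $x_1x_2\notin I(C_n)^{[k]}$ (here $k\ge 2$), \Cref{lemdepthequal}(iii) already yields the upper bound $\depth(R/I(C_n)^{[k]})\le 2k-1$, and by \Cref{lemdepthequal}(ii) the theorem will follow once we prove
\[
\depth\big(R/(I(C_n)^{[k]}+\l x_1x_2\r)\big)\ge 2k-1 .
\]

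For the last inequality I would split $I(C_n)^{[k]}+\l x_1x_2\r$ once more, now using the variable $x_1$ in \Cref{lemdepthequal}(ii). On the sum side, \Cref{lemcolonedge}(i) collapses everything, since $\l x_1x_2,x_1\r=\l x_1\r$: we get $I(C_n)^{[k]}+\l x_1x_2,x_1\r=I(C_n\setminus x_1)^{[k]}+\l x_1\r=I(P_{n-1})^{[k]}+\l x_1\r$, where $P_{n-1}$ is the path on $\{x_2,\ldots,x_n\}$; as $k=\nu(P_{n-1})$, \Cref{depth highest power} and \Cref{lemdepthextravar} show this quotient has depth exactly $2k-1$. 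On the colon side, \Cref{lemcolonedge}(ii) gives, after absorbing the leftover $x_2$-term into $\l x_2\r$,
\[
\big((I(C_n)^{[k]}+\l x_1x_2\r):x_1\big)=\l x_2\r+x_n\,I(Q)^{[k-1]}+I(P)^{[k]},
\]
where $Q$ is the path on $\{x_2,\ldots,x_{n-1}\}$ and $P$ the path on $\{x_3,\ldots,x_{n-1}\}$. The key point is $k=m-1>\nu(P)=m-2$, so $I(P)^{[k]}=\l 0\r$ and this ideal is just $\l x_2\r+x_n\,I(Q)^{[k-1]}$; killing the leaf $x_2$ of $Q$ and using \Cref{lemcolonedge}(i) once more turns it into $x_n\,I(P)^{[k-1]}$, and now $k-1=\nu(P)$, so $J:=I(P)^{[k-1]}$ is a \emph{top} square-free power on the $2m-3$ variables $x_3,\ldots,x_{n-1}$, whose quotient has depth $2(k-1)-1=2m-5$ by \Cref{depth highest power}. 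The only non-formal step is then the depth of the ``coned'' ideal $zJ$, where $z=x_n$ does not occur in $J$: applying \Cref{lemdepthequal}(ii) to $z$, the branch $zJ+\l z\r=\l z\r$ contributes the dimension $2m-3$ of the residual ring, while $(zJ:z)=J$ contributes $\depth(\cdot/J)+1=2m-4$, so $\depth(\cdot/zJ)\ge 2m-4$; restoring the two remaining variables $x_1,x_2$ via \Cref{depth sum} and \Cref{lemdepthextravar} raises this to $2m-3=2k-1$. Hence both branches of the $x_1$-splitting have depth $\ge 2k-1$, giving $\depth(R/(I(C_n)^{[k]}+\l x_1x_2\r))\ge 2k-1$ and closing the induction.

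I expect the main obstacle to be the bookkeeping rather than any single hard idea: one must track precisely which induced path sits on which vertex set after each colon and quotient, so that the matching numbers line up exactly — making $I(P)^{[k]}=\l 0\r$ on one side while forcing $I(P_{n-1})^{[k]}$ and $I(P)^{[k-1]}$ to be \emph{top} (hence polymatroidal) square-free powers whose quotient depths are pinned down by \Cref{depth highest power}. The only genuinely computational point is the short-exact-sequence estimate for the coned ideal $zJ$; everything else is a routine iteration of \Cref{lemdepthequal}.
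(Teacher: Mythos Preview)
Your proof is correct but takes a genuinely different route from the paper's. The paper dispatches the statement in three lines without induction: for the upper bound it colons by the long monomial $x_5\cdots x_n$, so that repeated application of \Cref{lemcolonedge}(iii) collapses $(I(C_n)^{[m-1]}:x_5\cdots x_n)$ to $I(C_4)$, whence $\depth\le (n-4)+\lceil 3/3\rceil=2(m-1)-1$ via \Cref{lemdepthequal}(iii), \Cref{lemdepthextravar} and \Cref{cycle edge ideal depth}; for the lower bound it simply invokes the general inequality $\depth(R/I(G)^{[k]})\ge 2k-1$ from \cite[Proposition~1.1]{EHHM12}. Your argument instead runs an induction on $n$, splitting first through $x_1x_2$ and then through $x_1$, and reduces the lower bound to top square-free powers of paths (where \Cref{depth highest power} pins the depth exactly) together with a short estimate for the coned ideal $x_nJ$. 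What the paper's approach buys is brevity: once the general lower bound is available, no induction or bookkeeping with induced paths is needed. What your approach buys is that it only appeals to the \emph{top-power} depth formula of \Cref{depth highest power}, so the lower-bound half is effectively rederived from elementary colon arithmetic rather than quoted wholesale; the price is the extra tracking of which path sits on which vertex set so that the matching numbers line up.
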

\begin{proof}
    Since $n$ is even, $\lfloor\frac{n}{2}\rfloor=\frac{n}{2}=m$ (say). Now, consider the ideal $(I(G)^{[m-1]}:x_5\cdots x_n)$. Then repeated applications of \Cref{lemcolonedge} give $(I(G)^{[m-1]}:x_5\cdots x_n)=I(C_4)$. Thus, by \Cref{cycle edge ideal depth} and \Cref{lemdepthextravar}, we get $\d(R/(I(G)^{[m-1]}:x_5\cdots x_n))=(n-4)+1=2(m-1)-1$. Hence, the result follows from \Cref{lemdepth}(iii) and \cite[Proposition 1.1]{EHHM12}.
\end{proof}

\section{Square-free powers of whiskered cycles}\label{sec whiskered cycle}

Our aim in this section is to provide sharp bounds for the regularity and depth of square-free powers of edge ideals of whiskered cycles. We also obtain exact formulas for the regularity and depth in the case of second square-free power.
\medskip

In order to investigate square-free powers of edge ideals of whiskered cycles, we need to consider a slightly general class of graphs. For $m\ge 3$ and $r\ge 1$, let $G_{2,m,r}$ be the graph (see also \Cref{fig:enter-label2}) with vertex set and edge set as follows:
\begin{align*}
    V(G_{2,m,r})&=\{x_i,y_j,z_k\mid 1\le i\le m,2\le j\le m,1\le k\le r\},\\
    E(G_{2,m,r})&=\{\{x_i,x_{i+1}\},\{x_1,x_m\}\{x_1,x_2\},\{x_i,y_i\},\{x_m,y_m\},\{x_1,z_k\}\mid 2\le i\le m-1,1\le k\le r\}.
\end{align*}

   Note that, if $r=1$, then $G_{2,m,1}$ is the whiskered cycle $W(C_m)$. Now define the graph class:
    \[
    \widetilde W(C_m):=\{G\mid G\cong G_{2,m,r}\text{ for some }r\ge 1\}.
    \]

\noindent For $G\in \widetilde{W}(C_m)$, we proceed to give bounds for $\reg(I(G)^{[k]})$. To begin with, we first consider the case $m=4$ separately.
    \begin{lemma}\label{whisker base case}
        Let $G\in \widetilde W(C_4)$ and $2\le k\le \nu(G)=4$. Then
        \[
        \reg(I(G)^{[k]})\le 2k+\left\lfloor\frac{4-k}{2}\right\rfloor.
        \]
    \end{lemma}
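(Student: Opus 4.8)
The statement asks for an upper bound $\reg(I(G)^{[k]}) \le 2k + \lfloor (4-k)/2 \rfloor$ for $G \in \widetilde{W}(C_4)$ and $2 \le k \le 4$. Since $\nu(G) = 4$, I would first dispose of the extreme cases: for $k=4$ the ideal $I(G)^{[4]}$ has a linear resolution (e.g.\ by \cite[Theorem 5.1]{BHZN}, as $G$ contains a perfect matching consisting of the four whisker-type edges, or via the polymatroidal property in \Cref{depth highest power}), giving $\reg = 8 = 2k + \lfloor (4-k)/2\rfloor$; and for $k=3$ one checks $\lfloor(4-3)/2\rfloor = 0$, so the claim is $\reg(I(G)^{[3]}) \le 6$, which again should follow from a linear-resolution argument after noting $\nu(G)-1 = 3$ and invoking a Betti-splitting or direct computation. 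Thus the real content is $k=2$, where the bound reads $\reg(I(G)^{[2]}) \le 5$.

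**Main case $k=2$.** Here $G = G_{2,4,r}$ has vertices $x_1,x_2,x_3,x_4$ forming the cycle, $y_2,y_3,y_4$ the whiskers on $x_2,x_3,x_4$, and $z_1,\dots,z_r$ the whiskers on $x_1$. I would run the standard colon/sum induction on $\reg$ using \Cref{lemreg}(iii), peeling off the edges at $x_1$. Specifically, order the edges incident to $x_1$ as $x_1z_1,\dots,x_1z_r, x_1x_2, x_1x_4$ and successively consider the ideals $L_i = I(G)^{[2]} + \langle \text{first } i \text{ of these edges}\rangle$. At each step \Cref{lemreg}(iii) bounds $\reg L_{i-1}$ by $\max\{\reg(L_{i-1} + \langle e\rangle), \reg(L_{i-1} : e) + 2\}$ where $e$ is the next edge. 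Using \Cref{lemcolonedge}(iii) (and \Cref{second power colon} for the $k=2$ colon description) together with \Cref{colon with variable}, each colon ideal $L_{i-1} : e$ becomes $I(H)^{[1]} + \langle\text{variables}\rangle$ for a graph $H$ on few vertices, plus possibly extra variables — and $I(H)$ is the edge ideal of a graph that is co-chordal (on $\le 5$ vertices after deletions the complement stays chordal), hence has a $2$-linear resolution and $\reg(L_{i-1}:e) + 2 \le 2 + 2 = 4 \le 5$. When all edges at $x_1$ are added, $I(G)^{[2]} + \langle \text{edges at } x_1\rangle$ has colon-by-$x_1$ equal (by \Cref{colon with variable}) to $I(G \setminus N_G[x_1])^{[2]} + \langle N_G(x_1)\rangle$, and $G\setminus N_G[x_1]$ is a single edge $\{x_3,y_3\}$ plus isolated vertices, so that square-free second power is zero and the regularity is controlled by \Cref{lemdepthextravar}-type/\Cref{reg sum} reasoning; adding $\langle x_1\rangle$ reduces to $I(G\setminus x_1)^{[2]}$, and $G \setminus x_1$ is a path $x_2 x_3 x_4$ with whiskers, i.e.\ in $\overline{W}(P_3)$, so \Cref{proppathwhisk} gives $\reg(I(G\setminus x_1)^{[2]}) = 4 + \lfloor 1/2\rfloor = 4 \le 5$.

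**The delicate point.** The main obstacle is bookkeeping the many colon ideals and checking that each intermediate graph $H$ appearing is genuinely co-chordal (so its edge ideal is $2$-linear) — in particular when we add the edges $x_1x_2$ and $x_1x_4$ last, the colon $(I(G)^{[2]} : x_1x_2)$ introduces new edges joining $N_G(x_1)\setminus\{x_2\}$ to $N_G(x_2)\setminus\{x_1\}$, which could create an induced $C_4$ in the complement; I would verify directly on the small vertex set that this does not happen, or absorb the bad case into the constant $5$ by crude estimates. One should also double-check the arithmetic $2k + \lfloor(4-k)/2\rfloor$ at $k=2,3,4$ equals $5,6,8$ respectively and that the inductive bounds never exceed these. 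I expect the proof to be short once the colon computations are tabulated, since the graph is so small; the risk is purely in not missing a generator in some colon ideal.

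\medskip
\noindent\textit{(Remark: a cleaner alternative would be to bound $\reg(I(G)^{[k]})$ above by $\reg(I(G')^{[k]})$ for a co-chordal supergraph $G' \supseteq G$ whenever possible, but $\widetilde{W}(C_4)$ itself is not co-chordal, so the inductive peeling above seems unavoidable.)}
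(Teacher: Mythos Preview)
Your outline has a genuine gap at $k=3$. You write that linearity ``should follow from a linear-resolution argument after noting $\nu(G)-1=3$ and invoking a Betti-splitting or direct computation,'' but there is no general theorem asserting that $I(G)^{[\nu(G)-1]}$ has a linear resolution (compare \Cref{linear even cycle}, where this fails for odd cycles). The paper does not dispose of $k=3$ separately; it runs the same colon/sum induction for $k\in\{2,3\}$ simultaneously.

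For $k=2$ your scheme can be made to work, but the specific claim that each colon $(I(G)^{[2]}:x_1z_i)$ yields a co-chordal graph is false: since $z_i$ is a leaf, $(I(G)^{[2]}:x_1z_i)=I(G\setminus\{x_1,z_i\})$, and $G\setminus\{x_1,z_i\}\cong W(P_3)$ (plus isolated vertices), which has an induced $2$-matching and hence $\reg=3$, not $2$. The bound $3+2=5$ still suffices, so the argument survives, but the reasoning you wrote does not.

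The structural difference from the paper is your choice of vertex. You peel off the $r+2$ edges at $x_1$, so the number of steps depends on $r$ and the first $r$ colons land in $\overline{W}(P_3)$. The paper instead peels at $x_2$, which has degree $3$ independent of $r$: it considers $L_3=I(G)^{[k]}+\l x_2y_2\r$, $L_2=L_3+\l x_2x_3\r$, $L_1=L_2+\l x_1x_2\r$, and then colons by $x_2$. Each colon lands either in $\overline{W}(P_2)$ or $\overline{W}(P_3)$ (via \Cref{proppathwhisk}) or is generated by variables, and the resulting bounds $2(k-1)+\lfloor\frac{4-k}{2}\rfloor$ work uniformly for $k=2$ and $k=3$. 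Peeling at a vertex of fixed small degree is what allows the paper to treat both values of $k$ at once; your choice of $x_1$ makes the $k=3$ case genuinely harder and you have not carried it out.
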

    \begin{proof}
        Without loss of generality let $G=G_{1,4,r}$ for some $r\ge 1$. Since $\nu(G)=4$, we have $k\le 4$. Note that, by \cite[Theorem 5.1]{BHZN}, $I(G)^{[4]}$ has a linear resolution, and thus, $\reg(I(G)^{[k]})= 2k+\left\lfloor\frac{4-k}{2}\right\rfloor$ when $k=4$. Therefore, we may assume that $k\in\{2,3\}$. Now, consider the ideal
        \[
        L_1=I(G)^{[k]}+\l x_2y_2,x_2x_3,x_1x_2\r.
        \]
        Using \Cref{colon with variable}, we have $(L_1:x_2)=\l y_2,x_3,x_1\r$ and $L_1+\l x_2\r=\l x_2\r+I(G\setminus x_2)^{[k]}$. Since $G\setminus x_2\in \overline{W}(P_3)$, using \Cref{proppathwhisk}, we obtain $\reg(L_1+\l x_2\r)\le 2k+\lfloor\frac{4-k}{2}\rfloor$. Thus, by \Cref{lemreg}, $\reg(L_1)\le 2k+\lfloor\frac{4-k}{2}\rfloor$. Next, consider the ideal $L_2=I(G)^{[k]}+\l x_2y_2,x_2x_3\r$. Note that $L_2+\l x_1x_2\r=L_1$. Also, by \Cref{lemcolonedge}, 
    \begin{align*}
    (L_2:x_1x_2)=\begin{cases}
        \l y_2,x_3,x_4y_4\r&\text{ if }k=2,\\
        \l y_2,x_3\r&\text{ if }k=3.
        \end{cases}
        \end{align*}
         Thus, using \Cref{lemreg}, we have $\reg(L_2)\le 2k+\lfloor\frac{4-k}{2}\rfloor$. Suppose $L_3=I(G)^{[k]}+\l x_2y_2\r$. Then $L_3+\l x_2x_3\r=L_2$ and $(L_3:x_2x_3)=\l y_2\r+(I(G)^{[k]}:x_2x_3)$. Using \Cref{lemcolonedge} again, it is easy to see that $(L_3:x_2x_3)=\l y_2\r+I(G_1)^{[k-1]}$ for some $G_1\in \overline{W}(P_2)$. Thus, we have $\reg(L_3:x_2x_3)\le 2k-2+\lfloor\frac{4-k}{2}\rfloor$ by \Cref{proppathwhisk}, and consequently, $\reg(L_3)\le 2k+\lfloor\frac{4-k}{2}\rfloor$. In other words, $\reg(I(G)^{[k]}+\l x_2y_2\r\le 2k+\lfloor\frac{4-k}{2}\rfloor$. Now applying \Cref{lemcolonedge} one more time, we see that $(I(G)^{[k]}:x_2y_2)=I(G_2)^{[k-1]}$, where $G_2\in \overline{W}(P_3)$, and hence, by \Cref{proppathwhisk}, $\reg((I(G)^{[k]}:x_2y_2))\le 2k-2+\lfloor\frac{4-k}{2}\rfloor$. Finally using \Cref{lemreg}, we conclude that $\reg(I(G)^{[k]})\le 2k+\left\lfloor\frac{4-k}{2}\right\rfloor$ as desired.
    \end{proof}

    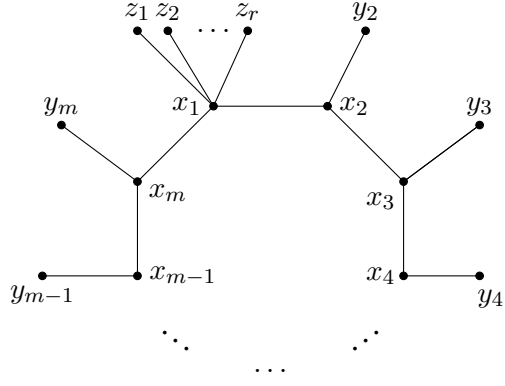
\begin{figure}
    \centering
    \begin{tikzpicture}
        [scale=.50]
        \draw [fill] (0,2.5) circle [radius=0.1]; % x_1
        \draw [fill] (-2,4.5) circle [radius=0.1]; % y_1
        \draw [fill] (-1.2,4.5) circle [radius=0.1]; % y_1
         \draw [fill] (0.9,4.5) circle [radius=0.1]; % y_1
        \draw [fill] (5,0.5) circle [radius=0.1]; % x_3
        \draw [fill] (7,2) circle [radius=0.1]; % y_3
        \draw [fill] (5,-2) circle [radius=0.1]; % x_4
        \draw [fill] (7,-2) circle [radius=0.1]; % y_4
        \draw [fill] (3,2.5) circle [radius=0.1]; % x_2
        \draw [fill] (4,4.5) circle [radius=0.1]; % w_1
        \draw [fill] (-2,0.5) circle [radius=0.1];% x_n
        \draw [fill] (-4,2) circle [radius=0.1]; %y_n
        \draw [fill] (-2,-2) circle [radius=0.1];% x_n-1
        \draw [fill] (-4.5,-2) circle [radius=0.1]; %y_n-1
        \node at (-4,2.5) {$y_m$};
        \node at (-1.2,0.2) {$x_{m}$};
        \node at (-0.7,2.5) {$x_1$};
        \node at (-2,5) {$z_1$};
        \node at (-1.2,5) {$z_2$};
        \node at (0.9,5) {$z_r$};
        \node at (4.4,0) {$x_{3}$};
        \node at (3.7,2.5) {$x_2$};
        \node at (4,5) {$y_2$};
        \node at (7,2.5) {$y_3$};
        \node at (4.4,-2) {$x_{4}$};
        \node at (7.3,-2.6) {$y_4$};
        \node at (-0.8,-2) {$x_{m-1}$};
        \node at (-4.5,-2.5) {$y_{m-1}$};
        \node at (-1,-3.5) {$\ddots$};
        \node at (4,-3.5) {\reflectbox{$\ddots$}};
        \node at (1.5,-4.5) {$\hdots$};
        \node at (0,4.5) {$\hdots$};
        \draw (0,2.5)--(3,2.5);
        \draw (0,2.5)--(-2,0.5);
        \draw (4,4.5)--(3,2.5);
        \draw (3,2.5)--(5,0.5);
        \draw (0,2.5)--(-2,4.5);
        \draw (0,2.5)--(-1.2,4.5);
        \draw (0,2.5)--(0.9,4.5);
        \draw (-4,2)--(-2,0.5);
        \draw (7,2)--(5,0.5);
        \draw (-2,-2)--(-2,0.5);
        \draw (-2,-2)--(-4.5,-2);
        \draw (7,2)--(5,0.5);
        \draw (5,0.5)--(5,-2)--(7,-2);
    \end{tikzpicture}
    \caption{A graph $G\in \widetilde W(C_m)$}
    \label{fig:enter-label2}
\end{figure}

    Now we are ready to prove the first main result of this section.

\begin{theorem}\label{reg whiskeredcycle multiedge}
    Let $m\ge 3$ be an integer and $G\in \widetilde W(C_m)$. Let $k$ be an integer such that $2\le k\le\nu(G)=m$. Then we have
    \[
    2k+\left\lfloor\frac{m-k-1}{2}\right\rfloor\le \reg(I(G)^{[k]})\le 2k+\left\lfloor\frac{m-k}{2}\right\rfloor.
    \]
\end{theorem}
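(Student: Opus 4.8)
\textbf{Proof proposal.} The plan is to handle the lower and upper bounds separately, with the lower bound being the easier of the two. For the lower bound, since $G$ is a forest (a whiskered cycle is not a forest, but $G_{2,m,r}$ \emph{is}, being a cycle with whiskers --- wait, $C_m$ is present, so $G$ is \emph{not} a forest), I cannot directly invoke \cite[Theorem 3.6]{CFL1}. Instead I would use \Cref{lemreg}(i): pass to $I(G)^{[k]}+\langle z\rangle$ for a suitable leaf $z$, or more usefully reduce to the whiskered path case. Concretely, take the whisker vertex $y_j$ at some cycle vertex $x_j$ and set $x=x_j$; then $I(G)^{[k]}+\langle x_j x_{j+1}\rangle$ or passing to $G\setminus x_j$ (which lies in $\overline W(P_{m-1})$, a class for which \Cref{proppathwhisk} gives the exact regularity $2k+\lfloor\frac{m-1-k}{2}\rfloor$) together with \Cref{lemreg}(i) yields $\reg(I(G)^{[k]})\ge \reg(I(G\setminus x_j)^{[k]}) = 2k+\lfloor\frac{m-1-k}{2}\rfloor = 2k+\lfloor\frac{m-k-1}{2}\rfloor$. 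Care is needed about whether $I(G\setminus x_j)^{[k]}$ is nonzero; when $k$ is close to $m$ the relevant matching number drops and one should instead appeal directly to \Cref{depth highest power}-type linearity facts, but for $k\le m-1$ it is fine.

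For the upper bound I would induct on $m$, modeling the argument on the proofs of \Cref{sqf path} and \Cref{proppathwhisk}, with \Cref{whisker base case} furnishing the base case $m=4$ (and $m=3$, where $G$ is a triangle with whiskers and small, can be checked by hand or via the co-chordal/linear-resolution observation). Fix the cycle vertex $x_2$ with whisker $y_2$ and cycle-neighbors $x_1,x_3$. Run the standard three-step colon/sum filtration used in \Cref{sqf path}: first bound $\reg\big(I(G)^{[k]}+\langle x_2y_2,x_2x_3,x_1x_2\rangle\big)$ by noting, via \Cref{colon with variable}, that its colon by $x_2$ is $I(G\setminus\{x_1,x_2,x_3,y_2\})^{[k]}+\langle\text{variables}\rangle$ with $G\setminus\{x_1,x_2,x_3,y_2\}\in\overline W(P_{m-3})$, and that adding $\langle x_2\rangle$ gives $I(G\setminus x_2)^{[k]}+\langle x_2\rangle$ with $G\setminus x_2\in\overline W(P_{m-1})$; apply \Cref{lemreg}(iii) and \Cref{proppathwhisk}. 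Then peel off the generators $x_2x_3$ and $x_2y_2$ one at a time: for each, the colon ideal is (by \Cref{lemcolonedge}) of the form $\langle\text{variables}\rangle + I(G')^{[k-1]}$ with $G'$ again in some $\overline W(P_{m-2})$ or $\overline W(P_{m-3})$, whose regularity the induction/\Cref{proppathwhisk} controls, and \Cref{lemreg}(iii) closes each step. Finally $(I(G)^{[k]}:x_2y_2)=I(G_2)^{[k-1]}$ with $G_2\in\overline W(P_{m-1})$ by \Cref{lemcolonedge}(iii), giving the last application of \Cref{lemreg}(iii).

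The main obstacle I anticipate is bookkeeping: at each colon step one must verify that the graph obtained by deleting $\{x_1,x_2,x_3,y_2\}$ (or similar) genuinely lands in the class $\overline W(P_\ell)$ for the correct $\ell$ --- in particular that the extra whiskers $z_1,\ldots,z_r$ attached at $x_1$ get absorbed correctly once $x_1$ is deleted or becomes a leaf-rich vertex of the resulting path, and that when $x_1$ is \emph{not} deleted the resulting graph is still of the required ``path-with-whiskers'' shape rather than something wilder. One also has to track the edge cases where $k$ equals $m$, $m-1$, or $m-2$, for which some $I(G')^{[k-1]}$ or $I(G')^{[k]}$ becomes the zero ideal (regularity $1$ by convention) and the arithmetic of the floor functions must still come out right; the inequalities $\lfloor\frac{(m-2)-k}{2}\rfloor \le \lfloor\frac{m-k}{2}\rfloor - 1$ and $\lfloor\frac{(m-1)-k}{2}\rfloor\le\lfloor\frac{m-k}{2}\rfloor$ are what make the \Cref{lemreg}(iii) steps go through, so I would state those explicitly. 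A secondary subtlety is that \Cref{proppathwhisk} is stated for $\nu(G)=m$ and $1\le k\le m$, so whenever I invoke it for a graph in $\overline W(P_\ell)$ I must have $k\le\ell$ (or $k-1\le\ell$), which dictates exactly the range $2\le k\le m$ in the statement.
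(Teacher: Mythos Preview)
Your overall architecture matches the paper's: the lower bound via the induced subgraph $G\setminus x_j\in\overline W(P_{m-1})$ together with \Cref{proppathwhisk} is exactly what the paper does (the paper cites \cite[Corollary 1.3]{EHHS} instead of \Cref{lemreg}(i), but your route through $I(G)^{[k]}+\langle x_j\rangle$ works just as well), and the upper bound by induction on $m$ with a colon/sum filtration centered at $x_2$ is precisely the paper's method, with \Cref{whisker base case} as the $m=4$ base and the co-chordal observation for $m=3$.

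There is, however, a genuine gap in your inductive step. You assert that each intermediate colon ideal has the form $\langle\text{variables}\rangle+I(G')^{[k-1]}$ with $G'\in\overline W(P_{m-2})$ or $\overline W(P_{m-3})$. This is false for one of the steps. When you peel off $x_2x_3$ from $L_3=I(G)^{[k]}+\langle x_2y_2\rangle$, the colon $(L_3:x_2x_3)=\langle y_2\rangle+(I(G)^{[k]}:x_2x_3)$ produces, via \Cref{lemcolonedge}(iii), a graph on $V(G)\setminus\{x_2,x_3,y_2\}$ in which the new edge $\{x_1,x_4\}$ re-closes the cycle: you get a member of $\widetilde W(C_{m-2})$ (indeed $x_1$ now carries the whiskers $z_1,\ldots,z_r$ together with $y_3$), \emph{not} a whiskered path. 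This is exactly why the theorem is stated for the enlarged class $\widetilde W(C_m)$ rather than just $W(C_m)$: the induction must stay inside $\widetilde W$, and this step uses the inductive hypothesis on $\widetilde W(C_{m-2})$, not \Cref{proppathwhisk}. Your sketch also peels off only two of the three added generators; in the paper the order is $x_1x_2$, then $x_2x_3$, then $x_2y_2$, and it is the middle step (colon by $x_2x_3$) that forces the cycle to reappear.
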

\begin{proof}
    Without loss of generality, let $G=G_{2,m,r}$. First, we show that $\reg(I(G)^{[k]})\le 2k+\left\lfloor\frac{m-k}{2}\right\rfloor$ and we proceed by induction on $m$. If $m=3$, then it is easy to see that $G$ is a co-chordal graph, and thus, $\reg(I(G)^{[k]})=2k$. If $m=4$, then by \Cref{whisker base case}, $\reg(I(G)^{[k]})\le 2k+\left\lfloor\frac{4-k}{2}\right\rfloor$. Therefore, we may assume that $m\ge 5$.

    Consider the ideal $L_1=I(G)^{[k]}+\l x_2y_2,x_2x_3,x_1x_2\r$. Using \Cref{colon with variable} we have $(L_1:x_2)=\l y_2,x_3,x_1\r+I(G_1)^{[k]}$, where $G_1$ is the induced subgraph of $G$ on the vertex set $V(G)\setminus\{x_1,x_2,x_3\}$. It is easy to see that $G_1\in \overline{W}(P_{m-3})$ and $I(G_1)^{[k]}=\l 0\r$ if $k\ge m-2$. Thus, by \Cref{proppathwhisk}, $\reg(L_1:x_2) \le 2k-1+\left\lfloor\frac{m-k}{2}\right\rfloor$. Note that $G\setminus x_2\in \overline{W}(P_{m-1})$, and thus, $I(G\setminus x_2)^{[k]}=\l 0\r$ if $k=m$. Since $L+\l x_2\r=I(G\setminus x_2)^{[k]}+\l x_2\r$, using \Cref{proppathwhisk}, we obtain $\reg(L_1+\l x_2\r)\le 2k+\lfloor\frac{m-k}{2}\rfloor$. Consequently, by \Cref{lemreg},
    \[
    \reg(L_1)\le 2k+\left\lfloor\frac{m-k}{2}\right\rfloor.
    \]
     Next, consider the ideal $L_2=I(G)^{[k]}+\l x_2y_2,x_2x_3\r$. Note that $L_2+\l x_1x_2\r=L_1$. Also, $(L_2:x_1x_2)=\l y_2,x_3\r+(I(G)^{[k]}:x_1x_2)$. Using \Cref{lemcolonedge} again, we see that $(L_2:x_1x_2)=\l y_2,x_3\r+I(G_1)^{[k-1]}$. Again, $I(G_1)^{[k-1]}=\l 0\r$ if $k=m-1$ or $k=m$. Thus, using \Cref{proppathwhisk}, we obtain $\reg(L_2:x_1x_2)\le 2k-2+\lfloor\frac{m-k}{2}\rfloor$, and consequently, by \Cref{lemreg}, we have
     \[
     \reg(L_2)\le 2k+\left\lfloor\frac{m-k}{2}\right\rfloor.
     \]
      Suppose $L_3=I(G)^{[k]}+\l x_2y_2\r$. Then $L_3+\l x_2x_3\r=L_2$. Moreover, $(L_3:x_2x_3)=\l y_2\r+(I(G)^{[k]}:x_2x_3)$. Using \Cref{lemcolonedge}, it is easy to see that $(L_3:x_2x_3)=\l y_2\r+I(G_2)^{[k-1]}$ for some $G_2\in \widetilde{W}(C_{m-2})$. Note that, $I(G_2)^{[k-1]}=\l 0\r$ if $k=m$. By the induction hypothesis, we get $\reg(L_3:x_2x_3)\le 2k-2+\lfloor\frac{m-k}{2}\rfloor$. Consequently, using \Cref{lemreg}, we have
      \[
      \reg(L_3)\le 2k+\left\lfloor\frac{m-k}{2}\right\rfloor.
      \]
      In other words, $\reg(I(G)^{[k]}+\l x_2y_2\r)\le 2k+\lfloor\frac{m-k}{2}\rfloor$. Now using \Cref{lemcolonedge} again, one can see that $(I(G)^{[k]}:x_2y_2)=I(G\setminus x_2)^{[k-1]}$, and hence by \Cref{proppathwhisk}, $\reg(I(G)^{[k]}:x_2y_2)\le 2k-2+\lfloor\frac{m-k}{2}\rfloor$. Finally using \Cref{lemreg} one more time, we conclude that $\reg(I(G)^{[k]})\le 2k+\left\lfloor\frac{m-k}{2}\right\rfloor$.\par

      For the lower bound of $\reg(I(G)^{[k]})$, note that if $k=m$, then by \cite[Theorem 5.1]{BHZN}, $\reg(I(G))^{[k]}=2m$. Thus, $\reg(I(G)^{[k]})\ge 2k+\left\lfloor\frac{m-k-1}{2}\right\rfloor$ when $k=m$. Suppose $k\le m-1$. In this case, $G\setminus x_1\in \overline{W}(P_{m-1})$ is an induced subgraph of $G$, and hence, by \Cref{proppathwhisk} and \cite[Corollary 1.3]{EHHS}, it follows that $\reg(I(G)^{[k]})\ge\reg(I(G\setminus x_1)^{[k]})=2k+\left\lfloor\frac{m-k-1}{2}\right\rfloor$. This completes the proof.
\end{proof}

Note that, by \cite[Proposition 1.1]{MFY}, $\reg(I(W(C_m)))=\lfloor\frac{m}{2}\rfloor+1$. Thus as a corollary of \Cref{reg whiskeredcycle multiedge} we obtain the following.

\begin{corollary}\label{whisker reg bound}
    Let $G=W(C_m)$. Then for each $1\le k\le \nu(G)=m$, we have
    \[
    2k+\left\lfloor\frac{m-k-1}{2}\right\rfloor\le \reg(I(G)^{[k]})\le 2k+\left\lfloor\frac{m-k}{2}\right\rfloor.
    \]
\end{corollary}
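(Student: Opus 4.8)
The plan is to obtain this as an immediate consequence of \Cref{reg whiskeredcycle multiedge}, after separating the trivial case $k=1$. First I would note, as already recorded in the paragraph defining $\widetilde W(C_m)$, that the whiskered cycle $W(C_m)$ is exactly the graph $G_{2,m,1}$: taking $r=1$ attaches a single whisker $\{x_1,z_1\}$ at $x_1$ and a single whisker $\{x_i,y_i\}$ at each of the remaining vertices of $C_m$. Hence $W(C_m)\in\widetilde W(C_m)$ with $\nu(W(C_m))=m$, and for every $k$ with $2\le k\le m$ the claimed two-sided estimate is precisely \Cref{reg whiskeredcycle multiedge} applied to $G=W(C_m)$.

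It then remains only to handle $k=1$, which is excluded from \Cref{reg whiskeredcycle multiedge}. In this case $I(W(C_m))^{[1]}=I(W(C_m))$, so by \cite[Proposition 1.1]{MFY} we have $\reg(I(W(C_m)))=\lfloor m/2\rfloor+1$, and one only needs the elementary floor identities
\[
2+\left\lfloor\frac{m-2}{2}\right\rfloor=\left\lfloor\frac{m}{2}\right\rfloor+1\le 2+\left\lfloor\frac{m-1}{2}\right\rfloor,
\]
which give both the lower and the upper bound at $k=1$ (with the lower bound in fact being an equality there). Combining the two cases yields the corollary.

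Since the substantive content is entirely carried by \Cref{reg whiskeredcycle multiedge}, I do not expect any genuine obstacle here; the only points requiring a word of justification are the identification $W(C_m)=G_{2,m,1}$ and the arithmetic of the base case $k=1$.
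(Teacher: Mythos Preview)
Your proposal is correct and mirrors the paper's own argument: the paper states the $k=1$ case via \cite[Proposition 1.1]{MFY} in the sentence preceding the corollary and then invokes \Cref{reg whiskeredcycle multiedge} (together with the identification $W(C_m)=G_{2,m,1}$) for $2\le k\le m$. Your verification of the floor arithmetic at $k=1$ is the only detail you add beyond what the paper writes explicitly, and it is correct.
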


\begin{remark}\label{whisker not linear}
In view of \Cref{whisker reg bound}, one can see that $I(W(C_m))^{[k]}$ does not have a linear resolution if $k<m-2$.
\end{remark}

\begin{remark}
    Let $G=W(C_m)$. Then it follows from \cite[Proposition 1.1]{MFY} that $\reg(I(G))=1+\lfloor \frac{m}{2}\rfloor=2+\lfloor\frac{m-2}{2}\rfloor$. Also, observe that when $m-k$ is odd, \Cref{whisker reg bound} gives $\reg(I(G)^{[k]})=2k+\left\lfloor\frac{m-k-1}{2}\right\rfloor$. Based on these facts and our next result on $\reg(I(G)^{[2]})$, we predict that $\reg(I(G)^{[k]})=2k+\lfloor\frac{m-k-1}{2}\rfloor$ for all $1\le k\le m$ (see \Cref{conj whisker reg}).
\end{remark}

\begin{theorem}\label{reg multi-edge whiskered cycle second power}
    Let $m\ge 3$ be an integer and $G\in \widetilde W(C_m)$. Then
    \[
    \reg(I(G)^{[2]})= 4+\left\lfloor\frac{m-3}{2}\right\rfloor.
    \]
\end{theorem}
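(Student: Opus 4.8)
The lower bound $\reg(I(G)^{[2]})\ge 4+\lfloor\frac{m-3}{2}\rfloor$ is exactly the case $k=2$ of \Cref{reg whiskeredcycle multiedge}, so only the matching upper bound has to be established. Moreover, when $m$ is odd we have $\lfloor\frac{m-2}{2}\rfloor=\lfloor\frac{m-3}{2}\rfloor=\frac{m-3}{2}$, so the case $k=2$ of \Cref{reg whiskeredcycle multiedge} already gives $\reg(I(G)^{[2]})\le 4+\lfloor\frac{m-2}{2}\rfloor=4+\lfloor\frac{m-3}{2}\rfloor$. Hence the entire content of the theorem is to prove, for \emph{even} $m$, the bound $\reg(I(G)^{[2]})\le 4+\frac{m-4}{2}=\frac{m+4}{2}$, which improves the general estimate of \Cref{reg whiskeredcycle multiedge} by exactly one.

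The plan is to prove this sharpened bound by induction on the even integer $m$. For $m=3$ the graph $G$ is co-chordal and $I(G)^{[2]}$ has a linear resolution; for the base case $m=4$ one shows directly that $\reg(I(G)^{[2]})=4$, i.e., that $I(G)^{[2]}$ has a linear resolution, exploiting that $\nu(G)=4$, that $G\setminus N_G[x_1]$ carries no $2$-matching (so the degree-$4$ part of $(I(G)^{[2]}:x_1)$ vanishes), and that the pendant whiskers $z_1,\dots,z_r$ at $x_1$ are mutually twin. For the inductive step ($m\ge 6$ even), writing $G=G_{2,m,r}$ as in \Cref{reg whiskeredcycle multiedge}, I would apply \Cref{lemreg}(iii) with the single variable $x_1$. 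By \Cref{lemcolonedge}(i) we have $I(G)^{[2]}+\l x_1\r=I(G\setminus x_1)^{[2]}+\l x_1\r$ with $G\setminus x_1\in\overline W(P_{m-1})$, so \Cref{proppathwhisk} gives $\reg(I(G)^{[2]}+\l x_1\r)=4+\lfloor\frac{(m-1)-2}{2}\rfloor=\frac{m+4}{2}$, which is exactly the target. It then remains to prove $\reg\big((I(G)^{[2]}:x_1)\big)\le 3+\frac{m-4}{2}=\frac{m+2}{2}$.

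For the colon ideal, \Cref{lemcolonedge}(ii) gives
\[
(I(G)^{[2]}:x_1)=\sum_{y\in N_G(x_1)}y\,I(G\setminus\{x_1,y\})+I(G\setminus N_G[x_1])^{[2]},
\]
where $G\setminus N_G[x_1]\in\overline W(P_{m-3})$ and, for each $y\in N_G(x_1)$, the graph $G\setminus\{x_1,y\}$ is, up to isolated vertices, a whiskered path: it is $W(P_{m-1})$ when $y$ is a whisker $z_i$ at $x_1$, and $W(P_{m-2})$ when $y\in\{x_2,x_m\}$. I would then bound the regularity of this sum by a nested \Cref{lemreg} argument: colour out, one at a time, the variables $z_1,\dots,z_r$ and then $x_2,x_m$, using \Cref{second power colon} to recognise the resulting second-power colons as edge ideals of certain whiskered cycles/paths, and estimating these via \Cref{proppathwhisk}, \cite[Proposition 1.1]{MFY}, and the inductive hypothesis for any member of $\widetilde W(C_{m-2})$ that reappears. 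The parity of the relevant path/cycle lengths must be tracked so that the floor functions collapse to the values needed at each stage.

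The main obstacle is precisely this last step. If one instead mimics verbatim the recursion used to prove \Cref{reg whiskeredcycle multiedge} (peeling the whisker $x_2y_2$), one is forced to colon $I(G)^{[2]}$ by the pendant edge $x_2y_2$, whose quotient is the edge ideal of $\overline W(P_{m-1})$ of regularity $\frac{m+2}{2}$ for even $m$; the accompanying degree-$2$ shift then gives only $\frac{m+6}{2}$, two more than the target. The genuine work is therefore to organise the reduction so that this cost is never paid --- using that the whiskers $z_1,\dots,z_r$ at $x_1$ are twins and that, after coloning out $x_1$, the degree-$3$ and degree-$4$ parts of the resulting ideal can be handled separately --- or, equivalently, to run a joint induction over the slightly larger family obtained by adjoining to $\widetilde W(C_m)$ the ``decorated cycle'' graphs $G'$ with $E(G')=E(G\setminus\{x_1,x_2\})\cup\{\{a,b\}\mid a\in N_G(x_1)\setminus\{x_2\},\ b\in N_G(x_2)\setminus\{x_1\}\}$ arising from second-power colons along a cycle edge, and proving a sharp regularity bound for all of them simultaneously. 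I expect the combinatorial bookkeeping for these auxiliary graphs, together with verifying that their regularities stay below $\frac{m+2}{2}$, to be the bulk of the argument.
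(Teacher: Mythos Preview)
Your proposal correctly isolates the issue to even $m$ and correctly diagnoses why the na\"ive recursion from \Cref{reg whiskeredcycle multiedge} fails (though the overshoot is by one, not two: $\reg(I(G\setminus x_2))=2+\lfloor\frac{m-2}{2}\rfloor=\frac{m+2}{2}$ for even $m$, and $+2$ gives $\frac{m+6}{2}$, which exceeds the target $\frac{m+4}{2}$ by one). But from there you only sketch a programme --- splitting by $x_1$, bookkeeping over an enlarged family of decorated cycles, running a separate induction --- and you explicitly flag the core step as unresolved. That is a genuine gap, and the machinery you propose is heavier than what is actually needed.

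The paper avoids all of this with a single reordering trick, and no induction on $m$ at all. Instead of peeling the whisker $x_2y_2$ first, peel the \emph{cycle} edge $x_2x_3$ first. By \Cref{second power colon} the colon $(I(G)^{[2]}:x_2x_3)$ is an ordinary edge ideal $I(G_1)$ on $V(G)\setminus\{x_2,x_3\}$; one further split of $I(G_1)$ by the variable $x_1$ collapses everything to members of $\overline W(P_{m-2})$ and $\overline W(P_{m-5})$, giving $\reg(I(G_1))\le 2+\lfloor\frac{m-3}{2}\rfloor$, exactly on target after the $+2$ shift. For the comma part $L_3=I(G)^{[2]}+\l x_2x_3\r$, one continues peeling the remaining edges at $x_2$ in the order $x_1x_2$, then $x_2y_2$. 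The point is that when you finally colon by $x_2y_2$ at the level of $L_2=I(G)^{[2]}+\l x_2x_3,x_1x_2\r$, the variables $x_1$ and $x_3$ are already forced into the ideal, so $(L_2:x_2y_2)=\l x_1,x_3\r+I(G\setminus\{x_1,x_2,x_3\})$ with $G\setminus\{x_1,x_2,x_3\}\in\overline W(P_{m-3})$, whose regularity is $2+\lfloor\frac{m-4}{2}\rfloor=\frac{m}{2}$ --- one less than the $\frac{m+2}{2}$ that was causing you trouble. The same phenomenon makes $(L_3:x_1x_2)$ land, modulo $\l x_3\r$, in $\overline W(P_{m-2})$ rather than something larger. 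Every branch then closes using only \Cref{proppathwhisk}; no new graph families and no inductive hypothesis on $m$ are required.
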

\begin{proof}
    In view of \Cref{reg whiskeredcycle multiedge}, it is enough to prove that $\reg(I(G)^{[2]})\le 4+\left\lfloor\frac{m-3}{2}\right\rfloor$. First, consider the ideal $(I(G)^{[2]}:x_2x_3)$. By \Cref{lemcolonedge}, $(I(G)^{[2]}:x_2x_3)=I(G_1)$, where $V(G_1)=V(G)\setminus\{x_2,x_3\}$, and $E(G_1)=E(G\setminus \{x_2,x_3\})\cup\{\{x_1,x_4\},\{x_1,y_3\},\{x_4,y_2\},\{y_2,y_3\}\}$. Now $(I(G_1):x_1)=\l z_k,y_3,x_4,x_m\mid 1\le k\le r\r+I(G_2)$, where $G_2=G_1\setminus\{x_1,y_2,y_3,z_k,x_4,x_m\mid 1\le k\le r\}$. Note that $G_2\in \overline{W}(P_{m-5})$, and $I(G_2)=\l 0\r$ if $m\le 5$. By \Cref{proppathwhisk}, we have $\reg(I(G_1):x_1)\le 1+\left\lfloor\frac{m-3}{2}\right\rfloor$. Moreover, $I(G_1)+\l x_1\r=I(G_3)$ for some $G_3\in \overline{W}(P_{m-2})$, where $I(G_3)=\l 0\r$ if $m\le 2$. Thus, again by \Cref{proppathwhisk}, $\reg(I(G_1)+\l x_1\r)\le 2+\left\lfloor\frac{m-3}{2}\right\rfloor$, and consequently, by \Cref{lemreg}, $\reg(I(G_1))\le 2+\left\lfloor\frac{m-3}{2}\right\rfloor$. Our aim now is to show that $\reg(I(G)^{[2]}+\l x_2x_3\r)\le 4+\left\lfloor\frac{m-3}{2}\right\rfloor$.\par 

    Consider the ideal $L_1=I(G)^{[2]}+\l x_2x_3,x_1x_2,x_2y_2\r$. Using \Cref{lemcolonedge}, we have $(L_1:x_2)=\l y_2,x_3,x_1\r+I(G_4)^{[2]}$, where $G_4$ is the induced subgraph of $G$ on the vertex set $V(G)\setminus\{x_1,x_2,x_3\}$. It is easy to see that $G_4\in \overline{W}(P_{m-3})$, and $I(G_4)^{[2]}=\l 0\r$ if $m\le 4$. Thus, by \Cref{proppathwhisk}, $\reg(L_1:x_2) \le 3+\left\lfloor\frac{m-3}{2}\right\rfloor$. Note that $L_1+\l x_2\r=I(G\setminus x_2)^{[2]}+\l x_2\r$, where $G\setminus x_2\in \overline{W}(P_{m-1})$. Hence, using \Cref{proppathwhisk}, we obtain $\reg(L_1+\l x_2\r)\le 4+\lfloor\frac{m-3}{2}\rfloor$. Consequently, by \Cref{lemreg},
    \[
    \reg(L_1)\le 4+\left\lfloor\frac{m-3}{2}\right\rfloor.
    \]
     Next, consider the ideal $L_2=I(G)^{[2]}+\l x_2x_3,x_1x_2\r$. Note that $L_2+\l x_2y_2\r=L_1$. Also, $(L_2:x_2y_2)=\l x_1,x_3\r+(I(G)^{[2]}:x_2y_2)$. Using \Cref{lemcolonedge}, we see that $(L_2:x_2y_2)=\l x_1,x_3\r+I(G_4)$. Again, $I(G_4)=\l 0\r$ if $m=3$. Thus, by \Cref{proppathwhisk}, we obtain $\reg(L_2:x_2y_2)\le 2+\lfloor\frac{m-3}{2}\rfloor$, and consequently, by \Cref{lemreg}, we have
     \[
     \reg(L_2)\le 4+\left\lfloor\frac{m-3}{2}\right\rfloor.
     \]
      Now, let $L_3=I(G)^{[2]}+\l x_2x_3\r$. Then $L_3+\l x_1x_2\r=L_2$. Moreover, $(L_3:x_1x_2)=\l x_3\r+(I(G)^{[2]}:x_1x_2)$. Using \Cref{lemcolonedge}, it is easy to see that $(L_3:x_1x_2)=\l x_3\r+I(G_5)$ for some $G_5\in \overline{W}(P_{m-2})$. Therefore, by \Cref{proppathwhisk}, $\reg(L_3:x_1x_2)\le 2+\lfloor\frac{m-3}{2}\rfloor$. Consequently, using \Cref{lemreg}, we have
      \[
      \reg(L_3)\le 4+\left\lfloor\frac{m-3}{2}\right\rfloor.
      \]
      In other words, $\reg(I(G)^{[2]}+\l x_2x_3\r)\le 4+\lfloor\frac{m-3}{2}\rfloor$. Recall that, we already proved $\reg(I(G)^{[2]}: x_2x_3)\le 2+\left\lfloor\frac{m-3}{2}\right\rfloor$. Hence, by \Cref{lemreg}, $\reg(I(G)^{[2]})= 4+\left\lfloor\frac{m-3}{2}\right\rfloor$ as desired.
\end{proof}

As a corollary of \Cref{reg multi-edge whiskered cycle second power}, we obtain the following formula for regularity of second square-free power of edge ideals of whiskered cycles.

\begin{corollary}\label{whisker cycle reg 2}
    Let $G=W(C_m)$. Then $\reg(I(G)^{[2]})= 4+\left\lfloor\frac{m-3}{2}\right\rfloor$.
\end{corollary}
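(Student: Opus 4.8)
The plan is entirely straightforward: recognize that $W(C_m)$ is the $r=1$ member of the family $\widetilde{W}(C_m)$ and invoke \Cref{reg multi-edge whiskered cycle second power}. Indeed, by the very definition of $G_{2,m,r}$, taking $r=1$ gives a single whisker vertex $z_1$ at $x_1$, so $G_{2,m,1}$ is precisely the whiskered cycle $W(C_m)$ (this was already noted in the paragraph introducing $G_{2,m,r}$). Hence $W(C_m)\in\widetilde{W}(C_m)$, and \Cref{reg multi-edge whiskered cycle second power} applies verbatim to give $\reg(I(W(C_m))^{[2]})=4+\left\lfloor\frac{m-3}{2}\right\rfloor$.

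Since the corollary is an immediate specialization, there is no genuine obstacle here — all the work (the colon/sum decompositions from \Cref{lemcolonedge}, the regularity bounds of \Cref{proppathwhisk} for the whiskered-path pieces, and the repeated applications of \Cref{lemreg}) has already been carried out inside the proof of \Cref{reg multi-edge whiskered cycle second power}. The only thing to check, which is a one-line verification, is that the hypothesis $m\ge 3$ of the theorem matches the range for which $W(C_m)$ is defined, and that $\nu(W(C_m))=m\ge 2$ so that $I(W(C_m))^{[2]}$ is a nonzero ideal; both are clear.

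If one preferred a self-contained argument rather than citing the theorem, one could simply rerun the same induction on $m$ directly for $W(C_m)$: split off the edge $\{x_2,x_3\}$, analyze $(I(G)^{[2]}:x_2x_3)=I(G_1)$ via \Cref{second power colon}, peel the whisker and path edges at $x_2$ using \Cref{lemcolonedge}, and bound each resulting piece with \Cref{proppathwhisk} and \Cref{lemreg}(iii); but this merely reproduces the theorem's proof, so citing \Cref{reg multi-edge whiskered cycle second power} is the natural route.

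\begin{proof}
Since $W(C_m)=G_{2,m,1}\in\widetilde{W}(C_m)$, the result is an immediate consequence of \Cref{reg multi-edge whiskered cycle second power}.
\end{proof}
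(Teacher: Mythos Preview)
Your proposal is correct and matches the paper's approach exactly: the paper presents this corollary as an immediate consequence of \Cref{reg multi-edge whiskered cycle second power}, with no separate proof given beyond the implicit observation that $W(C_m)=G_{2,m,1}\in\widetilde{W}(C_m)$.
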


Next, let us derive the depth of second square-free powers of edge ideals of whiskered cycles. %The first result in this direction is a general upper bound for $\d(R/I(W(C_m))^{[k]})$. 

\begin{theorem}\label{whisker cycle depth 2}
    Let $W(C_m)$ denote the whiskered cycle. Then
    \begin{align*}
        \depth(R/I(W(C_m))^{[2]})=
        \begin{cases}
            3 &\text{ if $m=3$},\\
            m+1 &\text{ if $m>3$}.
        \end{cases}
    \end{align*}

\end{theorem}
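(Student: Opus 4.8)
The plan is to mirror the structure of the regularity computation in \Cref{reg multi-edge whiskered cycle second power}, but track depth via \Cref{lemdepthequal} instead of \Cref{lemreg}. First I would dispose of the case $m=3$: here $W(C_3)$ is co-chordal with $\nu=3$, and since $k=2<\nu$ one computes $\depth(R/I(W(C_3))^{[2]})$ directly (the ideal $I(W(C_3))^{[2]}$ has a manageable generating set; in fact $\depth=2k-1=3$ need not hold a priori since $k\ne\nu$, so this small case is checked by hand or via $(I:x_2x_3)$ as below). For $m>3$, the idea is to run the same four-term colon/sum recursion used for regularity, applied with vertex $x_2$ and edges $x_2x_3$, $x_1x_2$, $x_2y_2$, and at each node feed in the depth formula for members of $\overline{W}(P_\ell)$ — which by \Cref{thmcmtreedepth} (Cohen-Macaulay forests) equals $\ell+k-1$ when nonzero — together with \Cref{lemdepthextravar} to absorb the extra variables appearing in each colon ideal.

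**Key steps in order.** (1) Compute $(I(W(C_m))^{[2]}:x_2x_3)=I(G_1)$ as in the regularity proof, where $G_1$ has $V(G_1)=V(G)\setminus\{x_2,x_3\}$ and the extra edges among neighbors; then via $(I(G_1):x_1)$ and $I(G_1)+\l x_1\r$ — both of which become edge ideals of graphs in $\overline{W}(P_\ell)$ after stripping variables — use \Cref{thmcmtreedepth}, \Cref{lemdepthextravar}, and \Cref{lemdepthequal} to get $\depth(R/(I(W(C_m))^{[2]}:x_2x_3))=m+1$ for $m>3$. This will be the ``target value'' that controls everything. (2) Build the chain $L_3=I(G)^{[2]}+\l x_2x_3\r$, $L_2=L_3+\l x_1x_2\r$, $L_1=L_2+\l x_2y_2\r$. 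For each of $L_1$, $L_2$, $L_3$ compute the relevant colon ideal ($(L_1:x_2)$, $(L_2:x_2y_2)$, $(L_3:x_1x_2)$) and the ``$+\l\cdot\r$'' ideal, identify each with $I(H)$ or $I(H)^{[2]}$ for $H\in\overline{W}(P_\ell)$ plus linear forms, apply \Cref{thmcmtreedepth}+\Cref{lemdepthextravar} to evaluate depth, and use \Cref{lemdepthequal}(i) to conclude $\depth(R/L_i)=m+1$ inductively up the chain — culminating in $\depth(R/(I(G)^{[2]}+\l x_2x_3\r))=m+1$. (3) Finally, $\depth(R/I(G)^{[2]})\in\{\depth(R/(I(G)^{[2]}:x_2x_3)),\depth(R/(I(G)^{[2]}+\l x_2x_3\r))\}$ by \Cref{lemdepthequal}(i), and since both equal $m+1$, we conclude $\depth(R/I(W(C_m))^{[2]})=m+1$.

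**Main obstacle.** The delicate point is that \Cref{lemdepthequal}(i) only gives membership in a two-element set, not an equality, so at every node I must check that \emph{both} branches give the same value $m+1$ — unlike the regularity argument, where \Cref{lemreg}(iii) only needs an upper bound on one side. The risk is that for small $m$ (say $m=4,5$) one of the auxiliary graphs in $\overline{W}(P_\ell)$ degenerates ($I(H)^{[2]}=\l 0\r$, or $\ell$ is too small for the path-depth formula to apply cleanly), forcing separate base-case bookkeeping; in those degenerate cases $R/I(H)^{[2]}=R/\l 0\r$ is a polynomial ring, so its depth is just its Krull dimension, and one has to verify these ad hoc computations still land on $m+1$. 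A secondary subtlety is confirming that the ``lower bound'' side is not automatic: one might worry $\depth$ could drop below $m+1$ at the $+\l x_2x_3\r$ node, but since both its children have depth exactly $m+1$, \Cref{lemdepthequal}(i) pins it down. The cleanest write-up keeps $m\ge 6$ as the generic inductive case (with $\overline{W}(P_{m-2})$, $\overline{W}(P_{m-3})$, $\overline{W}(P_{m-5})$ all nondegenerate) and handles $m\in\{3,4,5\}$ by direct computation, possibly aided by \Cref{cycle depth 2} for the underlying cycle structure and Macaulay2-style verification in the very smallest cases.
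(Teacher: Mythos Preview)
Your proposal is correct and essentially identical to the paper's proof: the paper uses exactly the same chain $L_1=I(G)^{[2]}+\l x_2x_3,x_1x_2,x_2y_2\r$, $L_2=I(G)^{[2]}+\l x_2x_3,x_1x_2\r$, $L_3=I(G)^{[2]}+\l x_2x_3\r$, computes the same colon ideals $(L_1:x_2)$, $(L_2:x_2y_2)$, $(L_3:x_1x_2)$, and $(I(G)^{[2]}:x_2x_3)=I(G_3)$ followed by $(I(G_3):x_1)$ and $I(G_3)+\l x_1\r$, invoking \Cref{thmcmtreedepth} and \Cref{lemdepthextravar} at each leaf to get depth $m+1$ on both branches and then \Cref{lemdepthequal} to propagate. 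The only cosmetic difference is that the paper handles the degenerate small-$m$ cases ($m=4$ and $m\le 5$) inline rather than as separate base cases, and for $m=3$ it uses $(I(G'):x_1)=\l y_1,y_2,y_3\r$ to get the upper bound $\le 3$ via \Cref{lemdepth}(iii) together with the lower bound $\ge 3$ from \cite[Proposition 1.1]{EHHM12}.
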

\begin{proof}
First, we will prove the result for $m=3$. By \Cref{lemcolonedge}, we have $(I(W(C_3))^{[2]}:x_2x_3)=I(G')$, where $V(G')=\{x_1,y_1,y_2,y_3\}$ and $E(G')=\{\{x_1,y_1\},\{x_1,y_2\},\{x_1,y_3\},\{y_2,y_3\}\}$. Then $(I(G'):x_1)=\l y_1, y_2, y_3 \r$, and thus, $\depth(R/(I(G'):x_1))=3$ by \Cref{lemdepthextravar}. We apply \Cref{lemdepth}(iii) to get $\depth(R/I(W(C_3))^{[2]})\leq 3$. Hence, due to \cite[Proposition 1.1]{EHHM12}, it follows that $\depth(R/I(W(C_3))^{[2]})= 3$. \par 

Now, let us assume $m>3$. For simplicity of notation, we write $G=W(C_m)$. First consider the ideal $L_1=I(G)^{[2]}+\l x_2x_3,x_1x_2,x_2y_2\r$. Then using \Cref{colon with variable}, we get $(L_1:x_2)=\l y_2,x_3,x_1\r+I(G_1)^{[2]}$, where $G_1$ is the induced subgraph of $G$ on the vertex set $V(G)\setminus\{x_1,x_2,x_3\}$. One can observe that $G_1$ is the whisker graph on $P_{m-3}$. Note that if $m=4$, then $I(G_1)^{[2]}=\l 0\r$, and in this case, $\depth(R/(L_1:x_2))=5$ by \Cref{lemdepthextravar}. If $m>4$, then we have $\depth(R/(L_1:x_2))=(m-3)+2-1+3=m+1$ by \Cref{lemdepthextravar} and \Cref{thmcmtreedepth}. Therefore, in both cases, we have $\depth(R/(L_1:x_2))=m+1$. Now observe that $L_1+\l x_2\r=I(G\setminus x_2)^{[2]}$, and $G\setminus x_2$ is the whisker graph on $P_{m-1}$. Thus, using \Cref{lemdepthextravar} and \Cref{thmcmtreedepth}, we get $\depth(R/(L_1+\l x_2\r))=(m-1)+2-1+1=m+1$. Consequently, by \Cref{lemdepth},
\[
\depth(R/L_1)=m+1.
\]
Next, consider the ideal $L_2=I(G)^{[2]}+\l x_2x_3,x_1x_2\r$. Note that $L_2+\l x_2y_2\r=L_1$. Also, $(L_2:x_2y_2)=\l x_1,x_3\r+(I(G)^{[2]}:x_2y_2)$. Using \Cref{lemcolonedge}, we see that $(L_2:x_2y_2)=\l x_1,x_3\r+I(G_1)$. Again by \Cref{lemdepthextravar} and \Cref{thmcmtreedepth}, we obtain $\depth(R/(L_2:x_2y_2))=(m-3)+4=m+1$. Hence, by \Cref{lemdepth}, we have
     \[
     \depth(R/L_2)=m+1.
     \]
      Now, let us take $L_3=I(G)^{[2]}+\l x_2x_3\r$. Then $L_3+\l x_1x_2\r=L_2$. Moreover, $(L_3:x_1x_2)=\l x_3\r+(I(G)^{[2]}:x_1x_2)$. Using \Cref{lemcolonedge}, it is easy to verify that $(L_3:x_1x_2)=\l x_3\r+I(G_2)$, where $G_2$ is the whisker graph on $P_{m-2}$. Therefore, by \Cref{lemdepthextravar} and \Cref{thmcmtreedepth}, $\depth(R/(L_3:x_1x_2))=(m-2)+3=m+1$. Consequently, using \Cref{lemdepth}, we have
      \[
      \depth(R/L_3)=m+1.
      \]
      Now, consider the ideal $(I(G)^{[2]}:x_2x_3)$. By \Cref{lemcolonedge}, $(I(G)^{[2]}:x_2x_3)=I(G_3)$, where $V(G_3)=V(G)\setminus\{x_2,x_3\}$, and $E(G_3)=E(G\setminus \{x_2,x_3\})\cup\{\{x_1,x_4\},\{x_1,y_3\},\{x_4,y_2\},\{y_2,y_3\}\}$. Next, we focus on the ideal $(I(G_3):x_1)$ and it is easy to check that for $m\leq 5$, $(I(G_3):x_1)$ is generated by some variables and $\depth(R/(I(G_3):x_1))=m+1$ by \Cref{lemdepthextravar}. Now, we assume $m>5$. Then $(I(G_3):x_1)=\l y_1,y_3,x_4,x_m\r+I(G_4)$, where $G_4=G_3\setminus\{x_1,y_1, y_2,y_3,x_4,x_m\}$. Note that $G_4$ is the whisker graph on $P_{m-5}$. By \Cref{lemdepthextravar} and \Cref{thmcmtreedepth}, we have $\depth(R/(I(G_3):x_1))=(m-5)+6=m+1$. Again, $I(G_3)+\l x_1\r=I(G_3\setminus\{x_1\})+\l x_1\r$, where $G_3\setminus \{x_1\}$ is the whisker graph on $P_{m-2}$. Thus, by \Cref{lemdepthextravar} and \Cref{thmcmtreedepth}, $\depth(R/(I(G_3)+\l x_1\r))=(m-2)+3=m+1$. Consequently, we have $\depth(R/I(G_3))=\depth(R/(I(G)^{[2]}:x_2x_3))=m+1$ by \Cref{lemdepth}. Hence, using \Cref{lemdepth} again, we obtain $\depth(R/I(G)^{[2]})=m+1$.
\end{proof}

The following proposition gives an upper bound of $\d(R/I(W(C_m))^{[k]})$ for any $1\le k\le m$, and this bound is sharp in some cases.

\begin{proposition}\label{depth whisker upper bound}
Let $W(C_m)$ denote the whiskered cycle. Then for each $1\le k\le \nu(W(C_m))=m$, we have
\[
\d(R/I(W(C_m))^{[k]})\le m+k-1
\]
\end{proposition}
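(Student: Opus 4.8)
The plan is to induct on $k$, peeling off one edge of the cycle via the colon/sum exact-sequence machinery already used throughout the paper (\Cref{lemdepth}), and to reduce the relevant colon and sum ideals to square-free powers of \emph{forests} — more precisely to edge ideals of whiskered paths, which lie in the classes $\overline{W}(P_\ell)$ handled by \Cref{thmcmtreedepth}, together with ideals generated by extra variables controlled by \Cref{lemdepthextravar}. Write $G=W(C_m)$ with $V(G)=\{x_1,\dots,x_m,y_1,\dots,y_m\}$, the cycle on the $x_i$'s and whiskers $\{x_i,y_i\}$. For $k=1$ the bound is $\d(R/I(G))\le m$, which is exactly the value $\d(R/I(W(C_m)))=m$ recorded in the introduction (from \cite{CM}); and for $k=m$ the bound reads $\d(R/I(G)^{[m]})\le 2m-1$, which holds with equality by \Cref{depth highest power}. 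So assume $2\le k\le m-1$.

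First I would compute $(I(G)^{[k]}:x_1x_2)$. By \Cref{lemcolonedge}(iii) (or \Cref{second power colon}-type reasoning for general $k$), this colon ideal is a sum of square-free $(k-1)$-st powers of graphs on $V(G)\setminus\{x_1,x_2\}$; the key point is that after deleting the cycle edge $\{x_1,x_2\}$ and adding the edges created between $N_G(x_1)\setminus\{x_2\}=\{x_m,y_1\}$ and $N_G(x_2)\setminus\{x_1\}=\{x_3,y_2\}$, each $G_i$ is (an induced-subgraph modification of) a whiskered path $W(P_{m-2})$ — a Cohen–Macaulay forest of dimension $m-2$. Invoking \Cref{thmcmtreedepth}, $\d(R/I(G_i)^{[k-1]})=(m-2)+(k-1)-1$, and since $k-1\le m-2$ lies in the admissible range this gives (accounting for the variable $x_1$ or $x_2$ that appears linearly, via \Cref{lemdepthextravar}) a value of the form $m+k-1$ or larger on the colon side. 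Next I would handle $J=I(G)^{[k]}+\l x_1x_2\r$: by \Cref{lemcolonedge}(i), $J+\l x_1\r$ and $(J:x_1)$ again reduce to square-free powers of whiskered paths (on $m-1$, resp.\ $m-3$, cycle-vertices) plus extra variables, whose depths are computed by \Cref{thmcmtreedepth} and \Cref{lemdepthextravar}; combining via \Cref{lemdepth}(ii) gives $\d(R/J)\le m+k-1$. Then \Cref{lemdepth}(ii) applied to $I(G)^{[k]}$, $\l x_1x_2\r$ yields $\d(R/I(G)^{[k]})\ge\min\{\d(R/J),\d(R/(I(G)^{[k]}:x_1x_2))\}$ — wait, for an \emph{upper} bound I instead want \Cref{lemdepth}(i): $\d(R/I(G)^{[k]})\in\{\d(R/J),\d(R/(I(G)^{[k]}:x_1x_2))\}$, so it suffices that \emph{both} of those are $\le m+k-1$; alternatively one can use induction on $k$ cleanly by noting $(I(G)^{[k]}:x_1x_2)$ produces whiskered \emph{paths} (genuine forests) rather than whiskered cycles, so no self-referential induction on $m$ is needed for the colon term.

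The main obstacle will be the bookkeeping in identifying the graphs $G_i$ appearing in $(I(G)^{[k]}:x_1x_2)$ and in the iterated colons $(J:x_1)$, $((I(G)^{[k]}+\l x_1x_2\r):\text{vars})$: one must check carefully that deleting two adjacent cycle vertices and splicing in the at-most-four new edges among $\{x_m,y_1,x_3,y_2\}$ genuinely yields a graph in $\overline{W}(P_{m-2})$ (or a disjoint union of a whiskered path with isolated vertices / edges that \Cref{depth sum} absorbs), and similarly that $G\setminus x_1\in\overline{W}(P_{m-1})$ and $G\setminus\{x_1,x_2,x_3\}\in\overline{W}(P_{m-3})$, so that \Cref{thmcmtreedepth} applies with the correct dimension. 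The degenerate small-$m$ cases ($m=3,4,5$) and the boundary values of $k$ (where some $I(G_i)^{[k-1]}$ becomes the zero ideal, contributing its full polynomial ring) should be dispatched separately, exactly as in the proofs of \Cref{reg multi-edge whiskered cycle second power} and \Cref{whisker cycle depth 2}; these are routine but need to be stated. Once the graph identifications are pinned down, each depth computation is a direct citation of \Cref{thmcmtreedepth}, \Cref{lemdepthextravar}, \Cref{depth sum}, and \Cref{lemdepth}.
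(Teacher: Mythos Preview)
Your plan takes a detour that both introduces a real error and misses a one-line argument. The paper's proof simply applies \Cref{lemdepth}(iii) with the \emph{whisker} edge $x_1y_1$: since $y_1$ is a leaf, $N_G(y_1)\setminus\{x_1\}=\emptyset$, so \Cref{lemcolonedge}(iii) gives $(I(G)^{[k]}:x_1y_1)=I(G\setminus\{x_1,y_1\})^{[k-1]}$ as a single ideal, and $G\setminus\{x_1,y_1\}=W(P_{m-1})$ is a Cohen--Macaulay forest of dimension $m-1$. Then \Cref{thmcmtreedepth} and \Cref{lemdepthextravar} yield $\d(R/(I(G)^{[k]}:x_1y_1))=(m-1)+(k-1)-1+2=m+k-1$, and \Cref{lemdepth}(iii) finishes. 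No induction, no sum-side analysis.

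By contrast, your choice of the cycle edge $x_1x_2$ creates two problems. First, $(I(G)^{[k]}:x_1x_2)=I(G_1)^{[k-1]}+I(G_2)^{[k-1]}$ is a genuine sum, and your identification of the $G_i$ as whiskered paths is incorrect: with $u_1=x_m$ and $\{v_1,v_2\}=\{x_3,y_2\}$, the graph $G_1$ acquires the edge $\{x_m,x_3\}$, closing the cycle $x_3\text{--}x_4\text{--}\cdots\text{--}x_m\text{--}x_3$. So $G_1$ is not a forest at all (it lies in $\widetilde W(C_{m-2})$, not $\overline W(P_{m-2})$), and \Cref{thmcmtreedepth} does not apply to it. Second, even setting this aside, for an \emph{upper} bound you only ever need \Cref{lemdepth}(iii); the entire analysis of $J=I(G)^{[k]}+\l x_1x_2\r$ via \Cref{lemdepth}(i) is superfluous, and the claim ``combining via \Cref{lemdepth}(ii) gives $\d(R/J)\le m+k-1$'' goes the wrong way (part (ii) is a lower bound). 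The fix is simply to colon by the whisker edge instead.
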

\begin{proof}
    If $k=1$, the assertion follows from \cite[Theorem 2.4]{CM}. For $k\ge 2$, we have from \Cref{lemdepth} that $\d(R/I(W(C_m))^{[k]})\le \d(R/(I(W(C_m))^{[k]}:x_1y_1))$ . Again, it follows from \Cref{lemcolonedge} that $(I(W(C_m))^{[k]}:x_1y_1)=I(G)^{[k-1]}$, where $G=W(P_{m-1})$. Therefore, applying \Cref{lemdepthextravar} and \Cref{thmcmtreedepth}, we obtain $\d(R/I(W(C_m))^{[k]})\le m-1+(k-1)-1+2=m+k-1$.
\end{proof}

\section{Some Conjectures}\label{sec conj}

Based on our results from previous sections and some computations through the computer algebra system Macaulay2 \cite{GS}, we predict formulas for the depth of square-free powers for cycles and whiskered cycles, and the regularity of square-free powers for whiskered cycles.\medskip 

 In view of \Cref{cycle depth lower bound}, \Cref{cycle depth 2}, \Cref{prop cycle depth m-1} and computational evidence, we propose the following conjecture.

\begin{conjecture}\label{conj depth cycle}
    Let $C_n$ denote a cycle of length $n$. Then
    \begin{align*}
        \depth(R/I(C_n)^{[k]})=\begin{cases}
             \lceil\frac{n}{3}\rceil +k-1 &\text{ if } k=2,\ldots, \lceil\frac{n}{3}\rceil,\\
             2k-1 &\text{ if } k=\lceil\frac{n}{3}\rceil+1,\ldots, \nu(C_n).
        \end{cases}
    \end{align*}
\end{conjecture}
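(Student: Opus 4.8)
The lower bound is already available: \Cref{cycle depth lower bound} gives $\d(R/I(C_n)^{[k]})\ge\lceil n/3\rceil+k-1$ for $2\le k\le\nu(C_n)$, while the general inequality $\d(R/I(G)^{[k]})\ge 2k-1$ from \cite[Proposition 1.1]{EHHM12} (already used in the proofs of \Cref{prop cycle depth m-1} and \Cref{thmcmtreedepth}) gives $\d(R/I(C_n)^{[k]})\ge 2k-1$. Since $\max\{\lceil n/3\rceil+k-1,\ 2k-1\}$ equals $\lceil n/3\rceil+k-1$ exactly when $k\le\lceil n/3\rceil$ and equals $2k-1$ when $k\ge\lceil n/3\rceil+1$, this lower bound already matches the predicted value in both ranges, so the whole statement reduces to proving the corresponding upper bound.

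For $k\ge\lceil n/3\rceil+1$ the plan is an induction on $n$, with the small cycles as base cases and \Cref{depth highest power} supplying the value $2k-1$ at $k=\nu(C_n)$. By \Cref{lemcolonedge}(iii), $(I(C_n)^{[k]}:x_1x_2)=I(C_{n-2})^{[k-1]}$, where $C_{n-2}$ is $C_n$ with $x_1,x_2$ deleted and $\{x_3,x_n\}$ added; as $x_1x_2\notin I(C_n)^{[k]}$ for $k\ge 2$, \Cref{lemdepth}(iii) and \Cref{lemdepthextravar} give $\d(R/I(C_n)^{[k]})\le\d(R/I(C_{n-2})^{[k-1]})+2$, the $+2$ recording the two deleted vertices. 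An easy check ($\lceil(n-2)/3\rceil\le\lceil n/3\rceil\le k-1$) shows $k-1\ge\lceil(n-2)/3\rceil$, so the inductive value of $\d(R/I(C_{n-2})^{[k-1]})$ is $2(k-1)-1$ (this holds at the boundary $k-1=\lceil(n-2)/3\rceil$ too), whence $\d(R/I(C_n)^{[k]})\le 2k-1$. The same edge colon closes the range $2\le k\le\lceil n/3\rceil$ in two cases: when $k=2$ one has $(I(C_n)^{[2]}:x_1x_2)=I(C_{n-2})$, an ordinary edge ideal of depth $\lceil(n-3)/3\rceil=\lceil n/3\rceil-1$ by \Cref{cycle edge ideal depth}, recovering \Cref{cycle depth 2}; and when $n\equiv 1,2\pmod 3$ one has $\lceil(n-2)/3\rceil=\lceil n/3\rceil-1$, so the induction gives $\d(R/I(C_{n-2})^{[k-1]})+2=\lceil n/3\rceil+k-1$.

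The one case that resists this is $n\equiv 0\pmod 3$ with $3\le k\le n/3$, and this is where I expect the real work to lie. Here $\lceil(n-2)/3\rceil=\lceil n/3\rceil$, and every elementary reduction I can see over-shoots the target by a single unit: the edge colon yields only $\d(R/I(C_n)^{[k]})\le\lceil n/3\rceil+k$; colons by several spaced or consecutive edges yield the same; passing to $P_{n-1}$ through $I(C_n)^{[k]}+\langle x_1\rangle$ (using \Cref{lemcolonedge}(i) and \Cref{path depth}) also loses one; and even combining \Cref{lemdepth}(i) with \Cref{colon with variable} only narrows the depth to $\{\lceil n/3\rceil+k-1,\ \lceil n/3\rceil+k\}$, which the available lower bound cannot separate. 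I would try to finish with a direct homological computation in the spirit of \Cref{cycle reg theorem}: by \Cref{facet homology formula} it suffices to produce one nonzero Betti number $\beta_{p,q}(I(C_n)^{[k]})\ne 0$ with $p=\lfloor 2n/3\rfloor-k$ --- equivalently an induced subcomplex $\Gamma$ of $\Delta(I(C_n)^{[k]})$ with $\widetilde{H}_{p-1}(\Gamma^c_{V(\Gamma)};\K)\ne 0$ --- which forces $\pd(I(C_n)^{[k]})\ge\lfloor 2n/3\rfloor-k$ and hence $\d(R/I(C_n)^{[k]})\le n-\lfloor 2n/3\rfloor+k-1=\lceil n/3\rceil+k-1$. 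Selecting the right vertex set for $\Gamma$ and running the accompanying chain-complex rank argument (as in Claims 1--4 in the proof of \Cref{cycle reg theorem}) is, I anticipate, the hardest step; a possible alternative is a finer induction that exhibits a monomial $f$ with both $\d(R/(I(C_n)^{[k]}+\langle f\rangle))$ and $\d(R/(I(C_n)^{[k]}:f))$ equal to $\lceil n/3\rceil+k-1$, so that \Cref{lemdepth}(i) concludes at once.
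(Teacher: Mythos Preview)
This statement is \emph{Conjecture~\ref{conj depth cycle}} in the paper: it is posed as an open problem, and the paper does not supply a proof.  The paper records only the partial evidence listed immediately after the conjecture --- the cases $k=2$, $k=\nu(C_n)$, and (for even $n$) $k=\nu(C_n)-1$, together with a Macaulay2 check through $n=15$.  So there is no ``paper's own proof'' to compare your attempt against.

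That said, your outline is internally sound as far as it goes.  The lower bound is exactly as you say: \Cref{cycle depth lower bound} and \cite[Proposition~1.1]{EHHM12} together give $\d(R/I(C_n)^{[k]})\ge\max\{\lceil n/3\rceil+k-1,\,2k-1\}$, which matches the conjectured value.  Your inductive reduction $(I(C_n)^{[k]}:x_1x_2)=I(C_{n-2})^{[k-1]}$ via \Cref{lemcolonedge}(iii), \Cref{lemdepth}(iii) and \Cref{lemdepthextravar} is correct, and the arithmetic you give for the ranges $k\ge\lceil n/3\rceil+1$ and $k\le\lceil n/3\rceil$ with $n\not\equiv 0\pmod 3$ checks out.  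You have correctly isolated the obstruction: when $n\equiv 0\pmod 3$ and $3\le k\le n/3$, every colon or deletion step you list overshoots by one because $\lceil(n-2)/3\rceil=\lceil n/3\rceil$.  The homological plan you sketch (exhibiting $\beta_{p,q}(I(C_n)^{[k]})\ne 0$ with $p=\lfloor 2n/3\rfloor-k$ via \Cref{facet homology formula}, forcing $\pd$ up and hence $\d$ down by Auslander--Buchsbaum) is a reasonable line of attack, but carrying it out would amount to settling the conjecture itself --- which is precisely what the paper leaves open.
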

\noindent Note that the cases $k=1, 2, \nu(C_n)$ of the above conjecture follows from \Cref{cycle edge ideal depth}, \Cref{cycle depth 2} and \Cref{depth highest power}, respectively. Moreover, for cycles of even length, we show in \Cref{prop cycle depth m-1} that the conjecture is true for $k=\nu(C_n)-1$. Also, we have verified the conjecture up to $n=15$ using Macaulay2 and considering the base field as $\mathbb{Q}$.\par

Now, let us focus on the regularity and depth of square-free powers of edge ideals of whiskered cycles. We have shown in \Cref{whisker reg bound} that for each $1\le k\le \nu(W(C_m))=m$, 
\[
2k+\left\lfloor\frac{m-k-1}{2}\right\rfloor\le \reg(I(W(C_m))^{[k]})\le 2k+\left\lfloor\frac{m-k}{2}\right\rfloor.
\]
Observe that whenever $m-k$ is odd, $\reg(I(W(C_m))^{[k]})=2k+\left\lfloor\frac{m-k-1}{2}\right\rfloor$. Moreover, $\reg(I(W(C_m)))$ is equal to the lower bound given above (see \cite[Proposition 1.1]{MFY} or \cite[Theorem 13]{BT2013}). We have shown in \Cref{reg multi-edge whiskered cycle second power} that $\reg(I(W(C_m))^{[2]})$ also coincides with the above lower bound. Based on these facts, we state the following conjecture.

\begin{conjecture}\label{conj whisker reg}
    Let $W(C_m)$ denote the whisker graph on $C_m$. Then for all $1\leq k\leq m$,
    $$\reg(I(W(C_m))^{[k]})=2k+\left\lfloor\frac{m-k-1}{2}\right\rfloor.$$
\end{conjecture}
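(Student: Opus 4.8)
First one reduces to the essential case $m-k$ even with $3\le k\le m-1$: when $m-k$ is odd the two bounds of \Cref{whisker reg bound} already coincide, the case $k\le 2$ is \Cref{whisker cycle reg 2} together with \cite[Proposition 1.1]{MFY}, and for $k=m$ the ideal $I(W(C_m))^{[m]}=\langle x_1y_1\cdots x_my_m\rangle$ is principal of degree $2m$ (so the formula is to be read for $k\le m-1$). Write $m-k=2t$; the claim becomes $\reg(I(W(C_m))^{[k]})\le 2k+t-1$, which is exactly one less than the upper bound furnished by \Cref{whisker reg bound}, so the whole point is to save this unit. The plan is to prove, by induction on $m$, the \emph{stronger} statement that $\reg(I(G)^{[k]})=2k+\lfloor\frac{m-k-1}{2}\rfloor$ for \emph{every} graph $G$ obtained from $C_m$ by attaching at least one whisker to each of its vertices — the colon operations below unavoidably leave the class $\widetilde W(C_m)$, so one cannot sidestep this generalization. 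For such $G$ the lower bound is already available: $G\setminus x_1\in\overline W(P_{m-1})$ up to isolated vertices, so \Cref{proppathwhisk} and \cite[Corollary 1.3]{EHHS} give $\reg(I(G)^{[k]})\ge\reg(I(G\setminus x_1)^{[k]})=2k+\lfloor\frac{m-k-1}{2}\rfloor$.

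For the upper bound I would imitate the scheme of \Cref{reg multi-edge whiskered cycle second power}, but split on a \emph{cycle} edge rather than a whisker edge — this choice is forced, because by \Cref{lemcolonedge} the colon $(I(G)^{[k]}:x_2y_2)$ equals $I(W(P_{m-1}))^{[k-1]}$, whose regularity $2(k-1)+\lfloor\frac{m-k}{2}\rfloor$ is precisely the non-sharp value and, after the degree shift $+2$, overshoots $2k+t-1$ by one. So one applies \Cref{lemreg}(iii) to $I(G)^{[k]}$ with the monomial $x_2x_3$. The summand $I(G)^{[k]}+\langle x_2x_3\rangle$ equals $I(H)^{[k]}+\langle x_2x_3\rangle$ for some $H\in\overline W(P_m)$ (delete the edge $x_2x_3$, opening the cycle into a path), and should be bounded by $2k+t-1$ through a routine but lengthy chain of colons $L_1\supset L_2\supset L_3=I(G)^{[k]}+\langle x_2x_3\rangle$ parallel to the one in \Cref{reg multi-edge whiskered cycle second power}: every intermediate colon lands either on a matching power of a graph in some $\overline W(P_j)$ (handled \emph{exactly} by \Cref{proppathwhisk}) or on a strictly smaller whiskered cycle (handled by the induction), and the extra generators $\langle x_1,x_3\rangle$ produced in $(L_2:x_2y_2)$ shorten the relevant whiskered path from $W(P_{m-1})$ to $W(P_{m-3})$; it is exactly this shortening that recovers the sharp bound, and it fails to happen unless the auxiliary monomials are inserted in the right order.

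The remaining term $(I(G)^{[k]}:x_2x_3)$ is where I expect the real obstacle. By \Cref{lemcolonedge}(iii) it is a \emph{sum} $I(G_1)^{[k-1]}+I(G_2)^{[k-1]}$, and one needs its regularity to be at most $2k+t-3$, so that the shift $+2$ stays within the target. When $k=2$ this sum collapses to a single edge ideal (\Cref{second power colon}), which is why \Cref{reg multi-edge whiskered cycle second power} goes through cleanly; for $k\ge3$ it genuinely does not. One of the $G_i$ is again a (multiply-whiskered) $C_{m-2}$, covered by the induction; the other is a whiskered \emph{tree}, for which one must either invoke the forest formula $\reg(I(G_i)^{[k-1]})=\mathrm{aim}(G_i,k-1)+(k-1)$ of \cite{CFL1} and bound $\mathrm{aim}(G_i,k-1)\le k+t-2$ by producing an explicit admissible-matching decomposition, or enlarge the inductive family once more to absorb such trees. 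In either case one still has to bound the regularity of the \emph{sum} of these two matching powers — the natural route is to show that $\{\mathcal G(I(G_1)^{[k-1]}),\mathcal G(I(G_2)^{[k-1]})\}$ (possibly after one further application of \Cref{lemreg}(iii) to the sum) is a Betti splitting, so that its regularity is the maximum of the summands', which turns the crux into a description of $I(G_1)^{[k-1]}\cap I(G_2)^{[k-1]}$. Equivalently and perhaps more transparently, one has $I(W(C_m))^{[k]}=A+B$ with $A=I(W(P_m))^{[k]}$ of regularity $2k+t$ and $B=x_2x_3\cdot I(W(P_{m-2}))^{[k-1]}$ of regularity $2k+t-1$; this is \emph{not} a Betti splitting (the ``maximum'' already exceeds the conjectured answer), so the conjecture amounts to the statement that the extremal homology classes of $A$ along the top slope $2k+t$ all lift to $A\cap B$ — a cancellation that provably occurs for $k=2$ but whose general proof appears to require a hands-on understanding of the extremal Betti numbers of square-free powers of whiskered paths, i.e.\ a vanishing statement for the reduced homology of certain facet subcomplexes, dual in spirit to the nonvanishing computation performed in \Cref{cycle reg theorem}.
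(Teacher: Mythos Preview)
The statement you are attempting is presented in the paper as an open \emph{conjecture}; the paper offers no proof beyond the partial cases $k=1$ (via \cite{MFY}), $k=2$ (\Cref{whisker cycle reg 2}), $m-k$ odd (where the two bounds of \Cref{whisker reg bound} coincide), and Macaulay2 verification for $m\le 10$. There is therefore no paper proof to compare against, and what you have written is a strategy sketch rather than a proof --- as you yourself make explicit when you call the colon $(I(G)^{[k]}:x_2x_3)$ ``the real obstacle'' and list possible attacks without carrying any of them out. You are also right that the stated formula is off by one at $k=m$.

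One concrete gap you do not flag: your ``routine'' chain for the summand $I(G)^{[k]}+\langle x_2x_3\rangle$ does \emph{not} go through in the order used in \Cref{reg multi-edge whiskered cycle second power}. With $L_3=I(G)^{[k]}+\langle x_2x_3\rangle$ and $L_2=L_3+\langle x_1x_2\rangle$, the colon $(L_3:x_1x_2)=\langle x_3\rangle+(I(G)^{[k]}:x_1x_2)$ is again, by \Cref{lemcolonedge}(iii), a sum of two $(k-1)$-st square-free powers (since $|N_G(x_1)\setminus\{x_2\}|=2$); for $k=2$ this collapses to a single edge ideal, but for $k\ge 3$ it does not, so your claim that ``every intermediate colon lands either on a matching power of a graph in some $\overline W(P_j)$ or on a strictly smaller whiskered cycle'' fails here. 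This particular wrinkle is repairable: if you reverse the order and add $x_2y_2$ before $x_1x_2$, then $(L_2':x_1x_2)=\langle x_3,y_2\rangle+(I(G)^{[k]}:x_1x_2)$, and killing both $x_3$ and $y_2$ annihilates all the extra edges in the two $G_i$, collapsing the sum to a single $I(W(P_{m-3}))^{[k-1]}$. With that reordering the entire $+$ branch does yield the sharp bound $2k+t-1$.

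The genuine obstruction remains exactly where you place it: bounding $\reg\bigl(I(G_1)^{[k-1]}+I(G_2)^{[k-1]}\bigr)$ for the colon branch. Neither of your two suggested routes is worked out --- the Betti-splitting approach would require controlling the intersection $I(G_1)^{[k-1]}\cap I(G_2)^{[k-1]}$, and the $\mathrm{aim}$ approach would require an explicit bound on the admissible matching number of the tree $G_2$, neither of which you supply. As it stands, the conjecture remains open.
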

\noindent As before, \Cref{conj whisker reg} has been verified up to $m=10$ using Macaulay2 and taking $\mathbb{K=Q}$.\par 

In \Cref{whisker cycle depth 2}, we explicitly derive $\depth(R/I(W(C_m))^{[2]})$. Also, it is well-known \cite{CM} that for any graph $G$ on $m$ vertices, $R/I(W(G))$ is Cohen-Macaulay with $\depth(R/I(W(G)))=m$. These results and our computation suggest the following conjecture.
\begin{conjecture}\label{conj whisker depth}
    Let $W(C_m)$ denote the whisker graph on $C_m$. Then
    \begin{align*}
        \depth(R/I(W(C_m))^{[k]})=\begin{cases}
             m +k-1 &\text{ if } k=1,\ldots, \lfloor\frac{m}{2}\rfloor,\\
             2k-1 &\text{ if } k=\lfloor\frac{m}{2}\rfloor+1,\ldots,m.
        \end{cases}
    \end{align*}
    
\end{conjecture}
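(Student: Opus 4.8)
The statement splits naturally along the two cases $1\le k\le\lfloor m/2\rfloor$ and $\lfloor m/2\rfloor+1\le k\le m$, and in each case one of the two required inequalities is already on record. Indeed, \Cref{depth whisker upper bound} gives $\depth(R/I(W(C_m))^{[k]})\le m+k-1$ for every $k$, and $\depth(R/I(W(C_m))^{[k]})\ge 2k-1$ holds for every $k$ by \cite[Proposition 1.1]{EHHM12}; since $m+k-1\ge 2k-1$ with equality exactly when $k=m$, the two regimes record precisely which of these is meant to be sharp. The endpoints $k=1$ (\cite{CM}), $k=2$ (\Cref{whisker cycle depth 2}) and $k=m=\nu$ (\Cref{depth highest power}) are already settled, so throughout one may assume $3\le k\le m-1$.

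For the regime $1\le k\le\lfloor m/2\rfloor$ it remains to prove the lower bound $\depth(R/I(W(C_m))^{[k]})\ge m+k-1$. I would prove this by induction on $m$, imitating the proofs of \Cref{thmcmtreedepth} and of the case $m>3$ of \Cref{whisker cycle depth 2}, and strengthening the statement to the class $\widetilde W(C_m)$ of (multiply) whiskered cycles already used in \Cref{reg whiskeredcycle multiedge}, so that the induction is self-contained. Fixing a cycle edge $\{x_2,x_3\}$, one walks down the chain
\[
I(W(C_m))^{[k]}+\l x_1x_2,x_2x_3,x_2y_2\r\ \supseteq\ I(W(C_m))^{[k]}+\l x_2x_3,x_2y_2\r\ \supseteq\ I(W(C_m))^{[k]}+\l x_2y_2\r\ \supseteq\ I(W(C_m))^{[k]},
\]
applying \Cref{lemdepth}(ii) at each step. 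Every colon that occurs is evaluated with \Cref{lemcolonedge}; the key point, exactly as in the regularity argument of \Cref{reg whiskeredcycle multiedge}, is that once the linear generators $\l x_1\r$, $\l x_3\r$ or $\l y_2\r$ have been adjoined, the ``cross edges'' produced by \Cref{lemcolonedge}(iii) all run through a vertex that has been killed, so the a priori sum of square-free powers collapses to a single one. Each resulting ideal is then, up to adjoining linear forms and free variables, the $k$- or $(k{-}1)$-st square-free power of either a whiskered path $W(P_\ell)$, which is a Cohen--Macaulay forest so that \Cref{thmcmtreedepth} applies, or of a strictly smaller graph in $\widetilde W(C_{m-2})$, to which the induction hypothesis applies; in every case \Cref{lemdepthextravar} converts the count back to $R$ and returns the value $m+k-1$. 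One has to check (using $k\le\lfloor m/2\rfloor$) that no intermediate square-free power collapses to $\l 0\r$ outside the intended range, and to dispose of the small cases $m\in\{3,4,5\}$ by hand. This part is lengthy but follows patterns already present in the paper.

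For the regime $\lfloor m/2\rfloor+1\le k\le m$ it remains to prove the upper bound $\depth(R/I(W(C_m))^{[k]})\le 2k-1$, which is the heart of the matter. Here I would colon by a single monomial $f=x_1x_2\cdots x_m$ (the product of the cycle vertices). Since $\deg f=m<2k$ while every minimal generator of $I(W(C_m))^{[k]}$ has degree $2k$, we have $f\notin I(W(C_m))^{[k]}$, so \Cref{lemdepth}(iii) gives $\depth(R/I(W(C_m))^{[k]})\le \depth\big(R/(I(W(C_m))^{[k]}:f)\big)$. Writing $y_1,\dots,y_m$ for the whisker variables, a direct check of which square-free monomials lie in the colon shows that
\[
\big(I(W(C_m))^{[k]}:x_1x_2\cdots x_m\big)\ =\ J\ :=\ \Big\l\ \textstyle\prod_{i\in S}y_i\ \ \Big|\ \ S\subseteq\{1,\dots,m\},\ \ |S|+\nu\big(C_m[\{1,\dots,m\}\setminus S]\big)\ge k\ \Big\r,
\]
a monomial ideal living in $\mathbb{K}[y_1,\dots,y_m]$ (this recovers the computation $(I(W(C_3))^{[2]}:x_1x_2x_3)=\l y_1,y_2,y_3\r$ inside the proof of \Cref{whisker cycle depth 2}). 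By \Cref{lemdepthextravar}, $\depth(R/J)=\depth(\mathbb{K}[y_1,\dots,y_m]/J)+m$, so the whole problem reduces to the combinatorial statement that $\depth(\mathbb{K}[y_1,\dots,y_m]/J)\le 2k-m-1$; the general lower bound $\depth(R/I(W(C_m))^{[k]})\ge 2k-1$ then forces equality everywhere.

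The step I expect to be the main obstacle is this last inequality for $J$. The ideal $J$ is not in general a square-free power of an edge ideal --- in the first non-trivial cases it is the maximal ideal $\l y_1,\dots,y_m\r$, or $I(C_4)$, or $I(K_{3,3})$, with depths $0,1,1$ indeed equal to $2k-m-1$ --- so nothing earlier can simply be quoted. I would attack it through the Stanley--Reisner complex $\Delta_J=\{U\subseteq\{1,\dots,m\}\mid |\{1,\dots,m\}\setminus U|-\nu(C_m[\{1,\dots,m\}\setminus U])\ge m-k+1\}$ of $J$, either by exhibiting an explicit induced subcomplex with non-vanishing reduced homology in the required degree (that is, a non-zero Betti number $\beta_{2m-2k+1,\,\bullet}(\mathbb{K}[y]/J)$), in the spirit of the Betti-number computation carried out for $I(C_n)^{[k]}$ in \Cref{cycle reg theorem}, or by proving that $\Delta_J$ is suitably shellable or sequentially Cohen--Macaulay and reading off the depth from its combinatorial structure. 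Once this is in place the two halves glue immediately, and one only has to verify at the boundaries $k=\lfloor m/2\rfloor+1$ and $k=m$ that the two formulas $m+k-1$ and $2k-1$ are consistent, which they are.
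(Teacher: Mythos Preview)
This statement is presented in the paper as an open conjecture (\Cref{conj whisker depth}), not as a theorem; the authors support it only with the settled cases $k\in\{1,2,m\}$ and Macaulay2 evidence. There is therefore no proof in the paper to compare your attempt against, and what you have written is explicitly a strategy rather than a proof.

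For the range $\lfloor m/2\rfloor+1\le k\le m-1$, your reduction is correct: the colon $(I(W(C_m))^{[k]}:x_1\cdots x_m)$ really is the ideal $J$ you describe, and your small examples check out. But the inequality $\depth(\K[y_1,\dots,y_m]/J)\le 2k-m-1$ that remains is exactly the content of the conjecture in this range; you list two possible attacks (a Hochster-type computation or shellability) without carrying out either, so nothing is actually proved here.

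For the range $3\le k\le\lfloor m/2\rfloor$, the inductive scheme through $\widetilde W(C_m)$ is the natural thing to try, and the colons do collapse as you claim (after adding $\l y_2\r$ the second summand from \Cref{lemcolonedge}(iii) is absorbed, leaving a single $I(G_1)^{[k-1]}$ with $G_1\in\widetilde W(C_{m-2})$). However, you never state the induction hypothesis for the enlarged class. A graph $G_{2,m,r}$ with $r\ge 2$ lives in a polynomial ring on $2m-1+r$ variables, and the correct target is not $m+k-1$: a variable count at your step $(L_3:x_2x_3)$ shows you need depth $m+k-3$ for a graph in $\widetilde W(C_{m-2})$ with $r=2$, which is one more than $(m-2)+(k-1)-1$. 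So you would first have to prove something like $\depth(R/I(G_{2,m,r})^{[k]})=\depth(R/I(W(C_m))^{[k]})+(r-1)$, for instance by showing that $z_i-z_j$ is a non-zerodivisor on $R/I(G_{2,m,r})^{[k]}$; this is plausible but is not in the paper and is not addressed in your outline. In short, the conjecture remains open after your attempt.
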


\noindent
{\bf Acknowledgements.} The first and the second authors are supported by Postdoctoral Fellowships at Chennai Mathematical Institute. The third author would like to thank the National Board for Higher Mathematics (India) for the financial support through the NBHM Postdoctoral Fellowship. All the authors are partially supported by a grant from the Infosys Foundation.  

\subsection*{Data availability statement} Data sharing does not apply to this article as no new data were created or
analyzed in this study.

\subsection*{Conflict of interest} The authors declare that they have no known competing financial interests or personal
relationships that could have appeared to influence the work reported in this paper.

\bibliographystyle{abbrv}
\bibliography{ref}
\end{document}